\documentclass[11pt]{amsart}
\usepackage{amssymb, amsfonts}
\usepackage{amsmath,amsthm}
\usepackage{xcolor}
\usepackage{appendix}
\usepackage{hyperref}  

\voffset = - 0.1 cm
\newtheorem{theorem}{Theorem}[section]
\newtheorem{lemma}[theorem]{Lemma}
\newtheorem{proposition}[theorem]{Proposition}
\newtheorem{corollary}[theorem]{Corollary}
\theoremstyle{definition}
\newtheorem{remark}[theorem]{Remark}
 
\newcommand{\beq}{\begin{equation}}
\newcommand{\eeq}{\end{equation}}
\newcommand{\bal}{\begin{aligned}}
\newcommand{\eal}{\end{aligned}}
\newcommand{\ben}{\begin{enumerate}}
\newcommand{\een}{\end{enumerate}}
\newcommand{\beqr}{\begin{eqnarray*}}
\newcommand{\eeqr}{\end{eqnarray*}}

\def\P{{\mathbb P}} 
\def\R{{\mathbb R}} 

\def\E{{\mathbb E}}
\def\Q{{\mathbb Q}} 
\def\N{{\mathbb N}}

\def\eps{{\varepsilon}}
\newcommand{\mathd}{\mathrm{d}}

\DeclareMathOperator{\Dom}{Dom}
\DeclareMathOperator{\Tr}{Tr}
\DeclareMathOperator{\Law}{Law}
\DeclareMathOperator{\Ker}{Ker}

\begin{document}  

\title{ Exponential integrability of the solution 
to the stochastic Burgers equation driven by  
white noise   }
  
\author{Francesco C. De Vecchi}
\address{Università degli Studi di Pavia, Dipartimento di Matematica ``Felice Casorati'', Via Adolfo Ferrata 5, 27100 Pavia PV}
\email{francescocarlo.devecchi@unipv.it}

\author{Josef Jan\'ak}
\address{Università degli Studi di Pavia, Dipartimento di Matematica ``Felice Casorati'', Via Adolfo Ferrata 5, 27100 Pavia PV}
\email{josefjanak@seznam.cz}

\author{Enrico Priola}
\address{Università degli Studi di Pavia, Dipartimento di Matematica ``Felice Casorati'', Via Adolfo Ferrata 5, 27100 Pavia PV}
\email{enrico.priola@unipv.it}
 
 \begin{abstract} We  study stochastic Burgers equation driven by  a rough noise $(-\Delta)^{\gamma} dW_t$, where $\Delta$ is the Laplacian in one dimension with Dirichlet boundary conditions, and $\gamma \in [0,1/4)$. We prove exponential estimates for  the solution $X_t^x$, starting from $x \in L^2(0,1)$, by showing that there exists some constant $\lambda >0$ for which
\begin{equation} \label{ds}  
\mathbb{E}
	\left[\exp\left(\lambda \sup_{t\in[0,T]}\|X_t^x\|_{L^2(0,1)}^2 \right) \right]< \infty.
\end{equation} 
This estimate was   known only in the case of trace class noise  when  $-1/2 <\gamma < -1/4 $ since in that case  one can  use the It\^o formula. To prove \eqref{ds}  we combine the  Bou\'e-Dupuis method with an argument used in  
 [Da Prato-Debussche, Potential Anal. 2007]. Estimates \eqref{ds} have important applications in large deviation theory, among others.
 We also deduce a  new Lipschitz regularizing effect for the corresponding   Markov semigroup.   
 \end{abstract}  
   
\maketitle

\noindent {\small {\it Keywords:} 
Stochastic Burgers equation; space-time white noise; exponential integrability, 
Lipschitz regularizing effect. \\
{\it 2020 MSC}: 60H15,  35R60

}

\section{Introduction }

In this paper we are considering the following (Burgers) stochastic partial differential equations (SPDEs)
\begin{equation} \label{eq:Burgers} 
	\begin{cases}
		\mathd X_{t} &= \Delta X_{t} \, \mathd t + \frac{1}{2} \partial_\xi \left( X^2_{t} \right) \, \mathd t + (- \Delta)^{\gamma} \, \mathd W_{t}, \\
		X_0 & = x, 		\\
		X_{t}|_{\partial \Lambda} & = 0, \quad 0 < t \leq T,
	\end{cases} 
\end{equation}  
where $\Lambda=(0,1)$, $W$ is a cylindrical Brownian motion with values in $H := L^2(\Lambda)$ (namely $\mathd W_t(\xi)$ is the Gaussian white noise on $[0,T] \times \Lambda$), the initial value $x $ is taken in $H$ (and it is supposed to be deterministic), and $\gamma \in [0, 1/4)$ is a constant. 
 Equation \eqref{eq:Burgers} is much investigated (see \cite{BFZ2026,BCM2026, DPDlincei1998,DPD,DPDT, G, JP, P21, WWLHaranck2011}  and the references therein); it is also a simplified model of turbulence; see, e.g., \cite{Burgers}. 
   
The condition $\gamma <\frac{1}{4}$ is needed to guarantee that the solution $X_t$ belongs to a space of functions. Since, in the case $\gamma \ge \frac{1}{4} $ the solution is a distribution and some renormalization process is needed to define the nonlinearity in the equation (see, e.g., \cite{ MR3592748,MR3071506} for the case $\gamma=\frac{1}{2}$ and \cite{MR1941997} for a case similar to $\gamma=\frac{1}{4}$); the more irregular regimes will be subject to future investigation. 
 
The main aim of the paper is to prove some exponential estimates for 
the solution to the Burgers equation \eqref{eq:Burgers}, namely we prove that there is some constant $\lambda >0$ such that 
\begin{equation}\label{eq:exponentialboundintro}
	\mathbb{E}\left[\exp\left(\lambda \sup_{t\in[0,T]}\|X_t\|_{L^2(\Lambda)}^2 \right) \right] = c_T < \infty, 
\end{equation} 
(see Theorem \ref{theorem:exponentialbound} for a more precise statement of the main result). We also obtain from \eqref{eq:exponentialboundintro}  exponential integrability of the invariant measure (see Theorem \ref{theorem:expBurgersinvariant}). Extensions of   \eqref{eq:exponentialboundintro} are given  in Lemma \ref{deri} for the derivative process of $X_t^x = X_t$ with respect to the initial condition $x$. These results lead to a new Lipschitz regularizing effect for the Markov semigroup $(P_t)$ corresponding to \eqref{eq:Burgers}; see Theorem  \ref{theorem:Lip}.

\noindent Estimate \eqref{eq:exponentialboundintro} is known only in the case of trace class noise, in particular   when  $-1/2 <\gamma < -1/4 $. In this situation one can also use the It\^o formula
to derive \eqref{eq:exponentialboundintro} and with this method 
the constant $c_T$  depends also on the trace of $(-\Delta)^{\gamma}$ (cf. \cite{DP} and \cite{DPDlincei1998}). We also mention the recent paper  \cite{BFZ2026} where in Section 5.2   exponential estimates like \eqref{eq:exponentialboundintro} are used  to show  large deviations  upper bound  for invariant measures of stochastic Burgers equations driven by trace class noises depending on $\epsilon.$   
We concentrate on the case $\gamma \in [0,1/4)$ which is the most difficult case we can consider. However we could also consider $\gamma \in (-1, 1/4)$. In this general case the only  change would be   related to Theorem \ref{theorem:Lip}: in  the estimate \eqref{eq:LipP} the square root $\sqrt{t}$ should replaced by $t^{\frac{1}{2}-(\gamma \wedge 0)}$.

Establishing exponential bounds of the form \eqref{eq:exponentialboundintro} can be useful (and sometimes essential) for the study of some properties of SPDEs such as (here we mainly cite  only some references related to stochastic Burgers and Navier-Stokes equations): exponential estimates for the invariant measures to SPDEs (see, e.g., Chapters 5 and 6 of \cite{DP}) and application to large deviation principle for these measures \cite{BFZ2026,BC2017,CP2022}, study of the log-Harnack inequality \cite{WWLHaranck2011},  study of infinite dimensional Hamilton-Jacobi-Bellman equations \cite{DDsicon,DPDlincei1998}, and the study of the rate of convergence of numerical approximations \cite{BCM2026,HJ2020}.
 
Recall that the more generic strategy to prove  bounds of the form \eqref{eq:exponentialboundintro} is the use of Lyapunov functions, which works when the driving Gaussian noise is of trace class and thus the infinite dimensional It\^o formula can be applied to the function $\exp(\lambda\|x\|_{L^2(\Lambda)}^2)$ (see, e.g., Proposition 5.10 \cite{DP} for Burgers equation with trace class  noise). 
\noindent The same strategy cannot be directly applied when the noise is more singular as the one studied here. Before describing our method, let us mention that, in the setting of singular SPDEs (when the solution to a nonlinear SPDE is a distribution and renormalization is needed for the well-posedness of the equation), some exponential type estimates for the expectation of the solution has been proved.  More precisely, in  \cite{GH2021,MW2020} (the case of stochastic quantization) and \cite{HZ2025} (the case of stochastic Navier-Stokes equation), the authors prove exponential moment estimates of the form $ \mathbb{E}\left[ \exp( \lambda \|X_t\|^{\alpha})\right ] < +\infty $ for a suitable norm $\| \cdot  \|$ and some power $\alpha <1$ (indeed $\alpha <1$ in  \cite{GH2021,MW2020}   and $\alpha=1/2$ in \cite{HZ2025}). We also mention that
  \cite{HS2022}   proves the, conjectured, best possible exponential moment  estimate available in the stochastic quantization case, exploiting, in an essential way, the superlinear dissipative term of the stochastic quantization equation. Since no superlinear  dissipative term is present in the stochastic Burgers equation, their technique cannot be applied (at least, to the best of our knowledge and in a trivial way) to equation \eqref{eq:Burgers} studied in the present article.

A novelty of the current paper consists in the method used for proving the bound \eqref{eq:exponentialboundintro} which is based the combination of two ideas: the application of Bou\'e-Dupuis formula (see \cite{BD}) to SPDEs, and the use of a Da Prato-Debussche trick which involves  an Ornstein Uhlenbeck type process depending on a random parameter $\alpha$ (see \cite{DPD}). In particular in \cite{DPD}  polynomial integrability  estimates for  the solution $X_t$ of  \eqref{eq:Burgers} have been established in the   white noise case of $\gamma =0.$
 
The Bou\'e-Dupuis formula, firstly proposed in \cite{BD}, provides a connection between the expectation of the exponential of (generic) functional  of Brownian motion and a variational problem with respect to the same Brownian motion shifted through an $L^2$-predictable process $u$ (see Proposition \ref{proposition:BD}). This formula, among many other applications, has been used to study large deviations for SPDEs (see, e.g., \cite{Dupuis2000,BDM2008,CP2022,LRZ2013,RZZ2010}). 

In the current paper we instead exploit the Bou\'e-Dupuis formula in order to obtain bounds on the expectations of the form \eqref{eq:exponentialboundintro}, by proving some a-priori estimates on the solution $X^{n,u}$ of the Galerkin approximation \eqref{eq:Burgersapporximation} to Burgers equation \eqref{eq:Burgers} with an additional $L^2$-predictable drift $u$. To the best of our knowledge, apart from the related (but not identical) use of the Bou\'e-Dupuis formula for studying the Gibbs measures of quantum field theory and stochastic quantization (see, e.g., \cite{BG2020,barashkov2022stochastic,barashkov2022invariant,BG2023,BGH2023}), it seems that the present article represents the first use of Bou\'e-Dupuis methods for bounding expectations of the solution to SPDEs as an alternative to 
Lyapunov function methods.

In order to obtain  the previous  a-priori estimates on the solution $X^{n,u}$ of the Galerkin approximation \eqref{eq:Burgersapporximation} we use the 
 second main idea of the paper which is obtained from  \cite{DPD}. 
  We briefly give a sketch of  this  method. In \cite{DPD} when  $\gamma =0$, for every  $\alpha\ge 0$ they consider  the $L^2(\Lambda)$-valued Ornstein-Uhlenbeck type process    
$$   
z_{\alpha}(t)=\int_0^t e^{(A-\alpha)(t-s)}\mathd W_s,\;\; t \ge 0, 
$$
 depending on $\alpha \ge 0$ (cf. \eqref{zz}). They look for  the equation verified by $Y_t^{\alpha} =X_t -z_{\alpha}(t) $. After establishing  a version of $z_{\alpha}$ which depends in a continuous way also on $\alpha$ (cf. Proposition \ref{continuo}) one can consider in the equation verified by $Y_t^{\alpha}$, a random  $ \alpha (\omega)$ such that, for any $\omega$, $\P$-a.s.,  
 $$
\sup_{t \in [0,T]}  \| z_{\alpha(\omega)}(t, \omega)\|_{L^4(\Lambda)} \leq 1.
 $$ 
 This allows to obtain by the Gronwall lemma an  estimate involving    $\E [\|X_t\|^2_{L^2(\Lambda)}]$ (also other moments estimates can be obtained).   
  We use a similar idea  with the Galerkin approximation (see the proof of Theorem \ref{theorem:apriorigamma} and in particular the  argument given after formula \eqref{vs}). 
    
 The plan of the paper is as follows. In Section 2  we fix notation and provide preliminary results useful to study  the stochastic Burgers equation.
   We also recall in Section \ref{section:BD} the  Bou\'e-Dupuis formula.
   Section  \ref{section:Burgers} presents some results on  stochastic Burgers equation, like the equivalence of weak and mild formulation   and the convergence of the Galerkin approximation to the solution of \eqref{eq:Burgers}.  We finish Section \ref{section:Burgers} with some   results on the process $z_{\alpha}$.\\
  Sections \ref{section:applicationBurgers} and \ref{section:exponentialbound}
  contain the main application of the Bou\'e-Dupuis formula to the Burgers equation.   The exponential integrability estimates are given in Section \ref{section:exponentialbound}. 
    After proving the exponential bounds of the form \eqref{eq:exponentialboundintro}, in Section \ref{section:applicationBurgers1}, we apply them to prove the exponential integrability of the $L^2$-norm with respect to the invariant measure of \eqref{eq:Burgers}.
    We also prove a new Lipschitz regularizing effect for 
the Markov semigroup $(P_t),$ associated with the solutions to the Burgers equation \eqref{eq:Burgers}:   
$$	|P_t\varphi(x)-P_t\varphi(x')|\leqslant\frac{L_{T,\varepsilon}\exp\left(\varepsilon\left(\|x\|_{L^2 (\Lambda)}^2+\|x'\|_{L^2(\Lambda)}^2\right)\right)}{\sqrt{t}}\|\varphi\|_{L^{\infty}} \|x-x'\|_{L^2(\Lambda)}.
 $$
$t \in (0,T]$, $x, x' \in L^2(\Lambda)$ with $\varphi \in B_b(H)$  (see Theorem \ref{eq:LipP}).  This was known only in the case of trace class noise (see \cite{DPDlincei1998}). The result clearly implies the strong Feller property for $(P_t)$.  Apart from the two previous applications of our exponential moment estimates, we  briefly discuss how to apply them   to the large deviations for invariant measures of  Burgers equations (see Remark \ref{remark:largedeviation}),  prove the log-Harnack inequality for the Burgers equation (see Remark \ref{remark:logHarnack}) and  study the Hamilton-Jacobi-Bellman equations associated with stochastic Burgers equations (see Section \ref{remark:HJB}). 
 An application  to weak existence for singular perturbations of stochastic Burgers equations is given Section \ref{girsanov} (this  uses the Girsanov theorem and our exponential estimate).\\
 \noindent Finally, Appendix A concerns with existence of  invariant measure and irreducibility of $(P_t)$. This together with the strong Feller property   leads to existence and uniqueness of invariant measure.

\section{Notation and some preliminary results }

\subsection{Notation}   

Consider $\Lambda = (0,1)$ and the real Hilbert space $H=L^2(\Lambda)$ equipped with the usual $L^2$-norm $\| \cdot \|=\|\cdot\|_{L^2_{\xi}} := \| \cdot \|_{L^2(\Lambda)}$ and the scalar product $\left\langle \cdot, \cdot \right\rangle := \left\langle \cdot, \cdot \right\rangle_{L^2_{\xi}}$. Even though the set $\Lambda$ is only one dimensional, we consider the standard Laplace operator with Dirichlet boundary conditions
$$
Au = \Delta u = \partial_{\xi \xi}^2 u = u''
$$
for a function $u$ which is regular enough (cf. \eqref{dom1} with $s=2$). $A$ is negative self-adjoint operator with the eigensystem $(\alpha_k, e_k)_{k=1}^{\infty}$:
\begin{equation} \label{sys3}
A e_k = - \alpha_k e_k = - \pi^2 k^2 e_k, \quad e_k(\xi) = \sqrt{2} \sin(k \pi \xi), \quad \xi \in [0,1], \, k \geq 1.
\end{equation}
To describe higher regularities, we consider for $s \geq 0$ the fractional Laplacian $(-A)^{s/2}$ and denote their domains by $H^s=H^s_{\xi} = H^s(\Lambda)$ with the norm $\| \cdot \|_s := \| \cdot \|_{H^s} := \|\cdot  \|_{H^s_{\xi}} := \| (-A)^{s/2} \cdot \|_{L^2_{\xi}}$. The Hilbert spaces 
\begin{equation} \label{dom1}
H^s = \Dom \left( (-A)^{s/2}) \right) = \Big\{ u \in L^2(\Lambda); \, \sum_{k=1}^{\infty} \alpha_k^s u_k^2 = \sum_{k=1}^{\infty} \alpha_k^s \left\langle u, e_k \right\rangle^2 < \infty \Big\}
\end{equation}
are equipped with the scalar product $\left\langle u, v \right\rangle_{H^s} = \sum_{k=1}^{\infty} \alpha_k^s u_k v_k$ for $u, v \in H^s$ ($H^0=H = L^2(\Lambda)$). By $B(0,n)$ we denote the closed ball in the space $H$ with the radius $n$, that is $B(0,n) = \{ u \in H; \, \| u \| \leq n \}$. Recall that $H^1 =H_0^1$ with equivalence of norms, where $H_0^1$ denotes  the space of all $u \in H$ with weak derivative $u' \in H$ and such that $u(\xi) = 0$ for $\xi \in \partial \Lambda$ (cf. Section 1.3 in \cite{Henry}, or \cite{Hairer}).
We denote by $(e^{At}, t \geq 0)= (e^{tA})$ the analytic semigroup on $H$ generated by $A$, i.e., in the present setting, the heat semigroup on $\Lambda$ with Dirichlet boundary conditions.

We will also use orthogonal projections with respect to $(e_k)$: 
\begin{equation*} \label{pp1}
 \begin{array} {l}
\pi_m=\sum_{k=1}^m e_k \otimes e_k, \;\;
\pi_m x = \pi_m(x) = \sum_{k=1}^m x_k e_k, \; \text{where $x_k = \langle x, e_k\rangle $, $x \in H$,}      
 \end{array}  
\end{equation*}
$m \ge 1.$ Since the eigenfunctions of the Laplacian are trigonometric functions, by standard results about $L^p$-convergence of Fourier series (see, e.g., Theorem 4.1.1 of \cite{Grafakos2014}), for any $1<p<+\infty$, we have that 
\begin{equation}\label{interpo}
\| \pi_n x \|_{L^p(\Lambda)} \le C_p \| x\|_{L^p(\Lambda)},\;\;\; x \in {L^p(\Lambda)},\; n \ge 1.
\end{equation} 
For $s \in \R$ and $p \geq 2$ we consider also $W^{s,p}(\Lambda)=W^{s,p}_{\xi}  := \{u \in L^p(\Lambda); \, \| u \|_{s,p} < \infty \}$, where $\| \cdot \|_{s,p} := \| (-A)^{s/2} \cdot \|_{L^p(\Lambda)}$. These are fractional Sobolev spaces associated to $A$ defined as Bessel potentials spaces. The space $C(\bar{\Lambda}) := C_{\xi}$ of all real continuous functions on $\bar{\Lambda}$ is equipped with the supremum norm $\| \cdot \|_{C(\bar{\Lambda})}$. The spaces $C^{k, \alpha}(\Lambda)$ for $k \geq 0$ integer and $0 < \alpha < 1$ are H\"older spaces equipped with usual norms. In the case $k=0$ we simply write $C^{\alpha}(\Lambda)$ or $C^{\alpha}_{\xi}$.

\begin{remark}\label{remark:sobolevebmedding}  
		For the Sobolev spaces $H^s_{\xi}$ and $W^{s,p}_{\xi}$ the Sobolev embedding theorems holds. Namely if $s_1-\frac{1}{p_1}>s_2-\frac{1}{p_2}$, for some $s_1,s_2\geq 0$ and $p_1,p_2\in[1,+\infty)$, then $W^{s_1,p_1}_{\xi}$ is compactly embedded in $W^{s_2,p_2}_{\xi}$ (see Chapter 7 in \cite{AdamsFournier} or  Section 3.3.1 of    \cite{TriebelI}); here we are using the convention that 
		$W^{s,2}_{\xi}:=H^s_{\xi}$. We recall 
	the  interpolation properties of the spaces $H^s_{\xi}$, namely: let $s_1<s_2\in\R$ and let $\theta \in (0,1)$ then the interpolation space of $H^{s_1}_{\xi}$ and $H^{s_2}_{\xi}$ is exactly $H^{\theta s_1 +(1-\theta)s_2}_{\xi}$ and for any $f \in H^{s_2}_{\xi}$ we have 
	\[\|f \|_{H^{\theta s_1+(1-\theta)s_2}_{\xi}} \leqslant C \|f \|_{H^{s_1}_{\xi}}^{\theta} \|f \|_{H^{s_2}_{\xi}}^{(1-\theta)},\]
	(see, e.g., Section 3.3.6 of \cite{TriebelI}). \qed
\end{remark} 
Throughout this work we fix a finite time horizon $T > 0$ (that could be arbitrarily large) and we write  $L^p_{t}:=L^p([0,T])$ for the standard $L^p$ spaces, with respect to Lebesgue measure, on $[0,T]$. Furthermore, if $s\in(0,1)$ we let $H^s_{t}:=H^s([0,T])$  be the Hilbert space with the following norm 
\[
\|f\|_{H^s_t}^2:=\|f\|_{L^2_t}^2+  \int_0^T{\frac{\|\Delta_h f\|_{L^2_{t}}^2}{h^{1+2s}}\mathd h}\]
where $\Delta_h f(t) = f(t + h) - f(t)$ when both $t, t+h \in [0,T]$ and $0$ otherwise. We denote by $H^1_t:=H^1([0,T])$ the Sobolev space on $[0,T]$ of functions having weak derivatives in $L^2$, equipped by the standard norm. We are going to consider space of functions, of various regularity, defined on the interval $[0,T]$ and taking values in some Banach space of functions with respect the space variable $\xi\in\Lambda$, such as $L^p(\Lambda), H^s(\Lambda),\ldots $. For instance, for   $1 \le  p < \infty$, $L^p([0,T]; H^s(\Lambda))$ is the set of all functions $u$ with values in $H^s(\Lambda)$ such that $u$ is strongly measurable on $[0,T]$ and $\| u(t)\|_{H^s_{\xi}}^p$ is integrable on $[0,T]$.  
 
We denote these spaces by $A_tB_{\xi}$ where $A,B=L^p,H^s,C^{\alpha}$. For example $C^{\alpha}_tH^s_{\xi}:=C^{\alpha}([0,T];H^{s}(\Lambda))$, $\alpha \in (0,1)$. 
 When $\alpha =0$, $C([0,T];H^{s}(\Lambda)):=C^{0}([0,T];H^{s}(\Lambda))$ indicates the space of  continuous functions from $[0,T]$ with values in $H^{s}(\Lambda).$

The norm of these spaces $A_tB_{\xi}$ is the natural generalization of the norms of $L^p_t,H^s_t,C^{\alpha}_t$ where the absolute values of $\R$ is replaced by the norm of $B_{\xi}$. 

\vskip 1mm
We denote by
${B}_b(H)$
  the Banach space of all real, bounded and  Borel functions on
  $H$  endowed with the supremum norm $\| f \|_{L^{\infty}} = \sup_{x \in H} |f(x)|$, $f \in {B}_b(H).$ Moreover 
  $C_b(H) \subset  B_b(H)  $ indicates the subspace of all   bounded and continuous  functions. We also define
 $C^{1} (H) \subset C_b(H)$ as  the  subspace of all bounded  functions which are Fr\'echet differentiable on $H$ with 
 a continuous  Fr\'echet derivative    on $H$ and $C^1_b(H) \subset C^1(H)$ as the subspace of all functions with bounded Fr\'echet derivative on $H.$

\begin{remark}\label{remark:sobolevebmedding2}
	The same Sobolev embedding theorems mentioned in Remark \ref{remark:sobolevebmedding} holds for the spaces $H^s([0,T])$, $L^p([0,T])$ and $C^{\alpha}([0,T])$. By combining the Sobolev embedding for the time-variable and the space-variable spaces one can applied them to the Banach spaces of the form $A_tB_{\xi}$ defined above. \qed
\end{remark}
  
\begin{remark}\label{remark:interpolation1}
	It is possible to generalize the interpolation result stated in Remark \ref{remark:sobolevebmedding} to the Lebesgue-Bochner spaces $L^p([0,T];H^s(\Lambda)):=L^p_tH^s_{\xi}$ described above. Indeed, since the spaces $H^s_{\xi}$ are separable, we can apply Theorem 1.1 of Chapter VII of \cite{LionsPeetre1964}, see also Theorem 5.1.2 in \cite{Bergh},
	and thus, if $p_1,p_2\in [1,+\infty]$, $s_1,s_2 \in \R$, $\theta \in [0,1]$, $\frac{1}{p_{\theta}}=\frac{\theta}{p_1}+\frac{1-\theta}{p_2}$, and $s_{\theta}=\theta s_1+(1-\theta) s_2$, we get that for any $f\in L^{p_1}_tH_{\xi}^{s_1}\cap L^{p_2}_tH_{\xi}^{s_2}$
	\[ \|f\|_{L^{p_{\theta}}_tH^{s_{\theta}}_{\xi}} \lesssim \| f\|_{L^{p_1}_tH_{\xi}^{s_1}}^{\theta} \|f \|_{L^{p_2}_tH_{\xi}^{s_2}}^{1-\theta} . \qed\]
\end{remark}

\begin{remark}\label{remark:heatregularization}
	In the paper we use the following two regularization properties of the heat kernel. 
		i) Consider a function $f \in L^2_tH^{s}_{\xi}$, where $s\in \R$, and let  $m_1,m_2 \geqslant 0$ such that 
	$m_1+ m_2<1$. If we define 
	\[f^H(t)=\int_0^te^{A (t-s)} f(s)\mathd s,\]
	then $f^H \in H^{m_1}_tH^{s+ 2m_2}_{\xi}$
	(see, e.g., Proposition 2.4.1 of Chapter VII of \cite{AmannvolumeII} for the $\R^n$-case). 
		ii) Let $f \in C_t^0 H^{s}_{\xi}$ then $f^H \in C^{0}_tH^{s+ 2 -\epsilon}_{\xi}$, for any $\epsilon>0$, $s \in \R$. This result can be  deduced from Proposition 5.9 in \cite{DPZ14} using  known estimates of the heat semigroup (see, for instance, formula (2.6) in \cite{DPDT}).
	\qed 
\end{remark}   
 
As we said throughout this work we fix a finite time horizon $T > 0$ and let $(\Omega, \mathcal F, (\mathcal F_t)_{t \geq 0}, \mathbb P)$ be a stochastic basis (satisfying the usual hypotheses) with a~cylindrical Wiener process $W= (W_t)$ on $H$ (cf. \cite{DPZ14}). The process $W$ is formally given by ``$W_t = \sum_{k=1}^{\infty} \beta_k(t) e_k$'' where $(\beta_k)$ are independent one di\-men\-sio\-nal Wiener processes adapted to the previous filtration; we are using the above orthonormal basis $(e_k)$ in $H$.

Throughout, all equalities and inequalities, unless otherwise mentioned, will be understood in the $\P$-a.s. sense. By $A \lesssim B $ we mean that there exists some constant $C > 0$ such that $A \leq C B$.

\subsection{Bou\'e-Dupuis formula and finite-dimensional SDEs}\label{section:BD}

In this section we give a brief summary of Bou\'e-Dupuis formula which is an  important mathematical tool used in this paper.

 Consider the following SDEs on a finite dimensional space $\R^n$
\begin{equation}\label{eq:BDSDE}
	\mathd X_t^{x,u}=b(X_t^{x,u})\mathd t+C u_t \mathd t +C dW_t,\quad X_0^{x,u}=x\in\R^n 
\end{equation}
where $C \in \R^{n \times n}$ is a matrix, and $b$ is a locally Lipschitz function, $u:=(u^1,...,u^n):[0,T] \times \Omega \rightarrow \R^n$ is  a predictable   process taking values in $\R^n$ such that \[\mathbb{E}\left[\sum_{i=1}^n \int_0^T |u^i_t|^2dt\right]=\mathbb{E}\left[\|u\|^2_{L^2([0,T];\R^n)}\right]<+\infty.\] We denote the space of such processes $u$ by $\mathcal{P}^{[0,T],\R^n}$.\\

Under the previous conditions on \eqref{eq:BDSDE}
 we have:

\begin{proposition}\label{proposition:BD}
Consider a measurable function $G:C^0([0,T];\R^n) \rightarrow \R_+$ and fix $x\in \R^n$. Suppose that for any  process $u\in \mathcal{P}^{[0,T],\R^n}$, equation \eqref{eq:BDSDE} has a unique strong solution which is global in time. Then we have
	\[\log \left(\mathbb{E}\left[\exp(G(X_{\cdot}^{x,0}))\right] \right) = \sup_{u \in \mathcal{P}^{[0,T],\R^n}}\mathbb{E}\left[G(X_{\cdot}^{x,u})-\frac{1}{2}\|u\|_{L^2([0,T];\R^n)}^2\right].\] 
\end{proposition}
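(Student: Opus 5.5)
The plan is to reduce the statement to the classical Boué--Dupuis variational formula for functionals of Brownian motion. That formula says that for any bounded-below measurable $F:C^0([0,T];\R^n)\to\R$,
\[
\log \E[\exp(F(W))]=\sup_{u\in\mathcal P^{[0,T],\R^n}}\E\Big[F\Big(W+\int_0^\cdot u_s\,\mathd s\Big)-\tfrac12\|u\|^2_{L^2([0,T];\R^n)}\Big].
\]
I would take it from \cite{BD}, in the version that allows unbounded $F\ge 0$, i.e.\ with both sides possibly equal to $+\infty$.

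\textbf{Step 1: express the solution as a function of the noise.} Since \eqref{eq:BDSDE} with $u=0$ has a unique global strong solution, Yamada--Watanabe gives a measurable map $\Phi:C^0([0,T];\R^n)\to C^0([0,T];\R^n)$ with $X^{x,0}=\Phi(W)$ a.s. The map $\Phi$ is progressively measurable: $\Phi(w)|_{[0,t]}$ depends only on $w|_{[0,t]}$. Set $F:=G\circ\Phi\ge 0$. Then the left-hand side of the claim is $\log\E[\exp(F(W))]$.

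\textbf{Step 2: identify the shifted process.} For $u\in\mathcal P^{[0,T],\R^n}$, put $W^u:=W+\int_0^\cdot u_s\,\mathd s$. This is the key step. I would show $\Phi(W^u)=X^{x,u}$ a.s. The process $\Phi(W^u)$ solves $\mathd Y=b(Y)\,\mathd t+C\,\mathd W^u=b(Y)\,\mathd t+Cu\,\mathd t+C\,\mathd W$. To make this rigorous, localize with $\tau_N=\inf\{t:\int_0^t|u_s|^2\,\mathd s\ge N\}$ and apply Girsanov on $[0,\tau_N]$. Under $\mathbb Q_N$, with density $\mathcal E(-\int u\,\mathd W)_{\tau_N}$, the process $W^{u\wedge\tau_N}$ is a Brownian motion. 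Hence $\Phi(W^{u\wedge\tau_N})$ is the $u=0$ strong solution driven by that Brownian motion, so it satisfies the equation $\mathbb Q_N$-a.s., hence $\P$-a.s. by equivalence of measures. Uniqueness of strong solutions for the drift $u\mathbf 1_{[0,\tau_N]}$ then gives the identification on $[0,\tau_N]$. Letting $N\to\infty$ (since $\tau_N\to T$ a.s.) and using global existence gives $\Phi(W^u)=X^{x,u}$ on $[0,T]$. The local Lipschitz property of $b$ enters only through the assumed well-posedness.

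\textbf{Step 3: conclude.} Substituting Step 2 into the variational formula gives
\[
\E\big[F(W^u)-\tfrac12\|u\|^2\big]=\E\big[G(X^{x,u})-\tfrac12\|u\|^2\big],
\]
which is the claim. If Boué--Dupuis is available only for bounded $F$, apply it to $F\wedge M$ and let $M\to\infty$. On the left use monotone convergence. On the right, the supremum over $u$ is monotone in $M$, so the limit of the suprema equals the supremum of the monotone limits, again by monotone convergence for each fixed $u$. The case where both sides are $+\infty$ is covered automatically.

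I expect the main obstacle to be Step 2: justifying that the strong solution map is progressively measurable and that it commutes with the random shift, using Girsanov for drifts that are only in $L^2$ in expectation (not Novikov). The localization by $\tau_N$ is what I would try first.
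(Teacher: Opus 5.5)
Your proposal is correct and follows essentially the paper's argument. The paper simply invokes Theorem 4.1 (and 5.1) of \cite{BD} and observes that global Lipschitzness of $b$ is used there only to guarantee global existence. Your Steps 1--3 write out exactly that reduction: the solution functional $\Phi$, the identity $X^{x,u}=\Phi\big(W+\int_0^\cdot u_s\,\mathd s\big)$, and the truncation $G\wedge M$. The truncation is valid as you state it, because the suprema over $u$ and over $M$ commute and monotone convergence applies for each fixed $u$.

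Because the noise is additive, Step 2 needs no Girsanov localization. The process $y:=X^{x,u}-C\big(W+\int_0^\cdot u_s\,\mathd s\big)$ solves pathwise the ODE $y'=b(y+Cw)$, $y(0)=x$, with $w=W+\int_0^\cdot u_s\,\mathd s$. Setting $\Phi(w):=y+Cw$ therefore gives a Borel, exactly non-anticipating map. The identification then follows from deterministic ODE uniqueness together with the assumed global existence.
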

\begin{proof}
	Proposition \ref{proposition:BD} is essentially Theorem 4.1 of \cite{BD} (see also Theorem 5.1 of \cite{BD} where the boundedness of the function $G$ is replaced by the fact that $G$ is bounded from below). The only difference in the statement of the present theorem and the one of Theorem 4.1 in \cite{BD} is that in the latter the coefficient $b$ of the SDE \eqref{eq:BDSDE} is assumed to be globally Lipschitz.
	This condition is there assumed only to assure the global in time existence of the solution to equation \eqref{eq:BDSDE}. Since here the global in time existence is assumed by hypotheses the current result is implied by the one in \cite{BD}.
\end{proof}

\section{Bou\'e-Dupuis formula and exponential integrability of Burgers equation}
 
\subsection{Stochastic Burgers equation}\label{section:Burgers}

In this paper we are going to consider the following Burgers stochastic partial differential equation  
\begin{equation}\label{eq:Burgersmain1}
	\mathd X_t^x=(AX_t^x+ \frac{1}{2}\partial_{\xi}(X_t^x)^2)\mathd t+(-A)^{\gamma}\mathd W_t,\quad X^x_0=x, 
\end{equation}
in the mild form    
\begin{equation} \label{sdd}
X_t^x=e^{At}x+\frac{1}{2}\int_0^te^{A (t-s)}\partial_{\xi}(X_s^x)^2 \mathd s+\int_0^te^{A (t-s)}(-A)^{\gamma}\mathd W_s,
\end{equation}
considering  the stochastic convolution process
\begin{equation} \label{eq:OUz}
z(t) := \int_0^t e^{A (t-s)} (-A)^{\gamma} \, dW_s, \;\; t \ge 0.
\end{equation}
Hence by a {\it  mild solution} to equation \eqref{eq:Burgersmain1}, we mean that for any $T>0$, there exists an $H$-valued adapted process $X^x = (X_t^x)_{t \in [0,T]}  $ with continuous paths  in $H$ such that  \eqref{sdd} holds (see also \cite{DPDT}).

\begin{remark} \label{sys}
Following Section 5.3 in \cite{DPZ14} one can  easily prove that 
$\P$-a.s. $z \in C([0,T]; H)$. To obtain this property it is essential that $\gamma < 1/4.$  Indeed we have 
\begin{align*}
\int_0^T t^{- 2 \alpha} \| e^{At} (-A)^{\gamma} \|_{HS}^2 \, dt &= \int_0^T t^{- 2 \alpha} \sum_{k=1}^{\infty} \alpha_k^{2 \gamma} e^{- 2 \alpha_k t} \, dt 
\lesssim 
\sum_{k=1}^{\infty} \frac{1}{k^{2 - 4 \alpha - 4 \gamma}} < \infty,
\end{align*}
if $\alpha < 1/4 - \gamma$.  Moreover,  by  means of the Kolmogorov test, as in Section 5.5.1 in \cite{DPZ14}, one can prove more, i.e., 
  $z \in C([0,T]; C(\bar{\Lambda}))$, $\P$-a.s. (therefore we will also have $z \in C([0,T]; L^4(\Lambda))$, $\P$-a.s.).
   
 To this purpose we only note that all the results of Section 5.5.1 in \cite{DPZ14} hold if
we deal with a positive diagonal operator $Q : H \to H$ (not necessarily bounded) such that $Q e_k = \lambda_k e_k$ (with  $\lambda_k>0$ and the basis $(e_k)$ which is defined in \eqref{sys3}). According to \cite{DPZ14} we have formally
$
\sqrt{Q}W_t $ $=\sum_{k=1}^{\infty} \sqrt{\lambda_k}\,  e_k \beta_k(t)
$
where $(\beta_k)$ are independent real Wiener processes (the series in general converges in a Hilbert space larger than $H$).
In our case $\sqrt{Q} = (-A)^{\gamma}$ and so we have formally
$$
(-A)^{\gamma} W_t = \sum_{k=1}^{\infty} \alpha_k^{\gamma} \, e_k \,  \beta_k(t),
\quad \text{i.e., $\; \lambda_k = \alpha_k^{2\gamma} = (\pi^2 k^2)^{2 \gamma}$.} \qed
$$ 
 \end{remark}

The mild formulation implies that  we can consider \eqref{eq:Burgersmain1} 
in the weak sense, namely, for any $f \in D(A) = H^2$,  
we have, $\P$-a.s., $t \in [0,T],$ 
\begin{equation}\label{eq:weaksolution}\langle X_t^x-x,f\rangle=
\int_0^t \langle \Delta_{\xi}f,X^x_s \rangle\mathd s
-\frac{1}{2}\int_0^t\langle\partial_{\xi}f,(X^x_s)^2 \rangle\mathd s+\int_0^t\int_{\Lambda}f(\xi)(-A)^{\gamma}\mathd W_s(\xi) \mathd \xi. 
\end{equation}
 Note that the last term can be written as $ \langle  (-A)^{\gamma} f, W_t\rangle $. 

We want to point out that, in the case of Burgers equation with solution process $X \in C^0_tL^2_{\xi}$ almost surely, the weak and mild formulations are equivalent.

\begin{proposition} \label{weakmild}
Suppose that  $X \in C^0_tL^2_{\xi}$ is an adapted process. Then $X$ is a weak solution to equation \eqref{eq:Burgersmain1} if and only if  $X$ satisfies equality \eqref{sdd}. 
\end{proposition}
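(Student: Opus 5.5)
We prove the two implications separately, using the eigenbasis $(e_k)$ of $A$ to reduce everything to scalar identities. Throughout, $X\in C^0_tL^2_\xi$ implies $X^2\in C^0_tL^1_\xi$, so $\partial_\xi (X_s)^2$ lies in a negative Sobolev space. Since $L^1(\Lambda)\hookrightarrow H^{-s}$ for $s>1/2$ by duality with $H^s\hookrightarrow C(\bar\Lambda)$ (Remark \ref{remark:sobolevebmedding}), we get $\partial_\xi(X_s)^2\in C^0_tH^{-1-s}_\xi$. By Remark \ref{remark:heatregularization} ii), the convolution $\int_0^t e^{A(t-s)}\partial_\xi (X_s)^2\,\mathd s$ is then a well defined continuous $H^{1-s-\epsilon}$-valued, in particular $H$-valued, process. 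The stochastic convolution $z$ is in $C([0,T];H)$ by Remark \ref{sys}. Hence both sides of \eqref{sdd} make sense.

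\emph{Mild $\Rightarrow$ weak.} Take $f=e_k$ first. Pairing \eqref{sdd} with $e_k$ and using $e^{A(t-s)}e_k=e^{-\alpha_k(t-s)}e_k$ gives
\[
\langle X_t,e_k\rangle = e^{-\alpha_k t}x_k-\frac12\int_0^t e^{-\alpha_k(t-s)}\langle \partial_\xi e_k,X_s^2\rangle\,\mathd s+\alpha_k^\gamma\int_0^t e^{-\alpha_k(t-s)}\mathd\beta_k(s).
\]
The right-hand side is the unique solution of the scalar linear SDE
\[
\mathd y=-\alpha_k y\,\mathd t-\tfrac12\langle \partial_\xi e_k,X_t^2\rangle\,\mathd t+\alpha_k^\gamma\,\mathd\beta_k,
\]
by variation of constants (Itô's product rule applied to $e^{\alpha_k t}y_t$). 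Writing this SDE in integrated form and using $\langle \Delta e_k,X_s\rangle=-\alpha_k\langle e_k,X_s\rangle$ gives exactly \eqref{eq:weaksolution} for $f=e_k$. By linearity it holds for $f\in\mathrm{span}\{e_k\}$. For general $f\in H^2$, set $f_m=\pi_m f$, so that $f_m\to f$ in $H^2$. Then $\partial_\xi f_m\to\partial_\xi f$ in $H^1\subset C(\bar\Lambda)$, which controls the nonlinear term because $X^2\in L^1$. We also have $\Delta f_m\to \Delta f$ in $H$, and $\langle(-A)^\gamma f_m,W_t\rangle\to\langle(-A)^\gamma f,W_t\rangle$ in $L^2(\Omega)$ since $(-A)^\gamma f_m\to(-A)^\gamma f$ in $H$. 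Passing to the limit along a subsequence gives the identity a.s. for each $t$, and continuity in $t$ gives it for all $t$ simultaneously.

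\emph{Weak $\Rightarrow$ mild.} Apply \eqref{eq:weaksolution} with $f=e_k\in H^2$. This says that $y_k(t)=\langle X_t,e_k\rangle$ satisfies the scalar SDE above. Applying Itô's formula to $e^{\alpha_k t}y_k(t)$ (or invoking uniqueness for the linear SDE) yields the variation-of-constants formula, which is the $k$-th Fourier coefficient of \eqref{sdd}. Both sides of \eqref{sdd} are $H$-valued random variables, continuous in $t$, whose coefficients agree for every $k$ a.s. Taking the countable intersection over $k$, and then over rational $t$ extended by continuity, we conclude that \eqref{sdd} holds a.s. for all $t$.

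The main technical point is the well-posedness of the nonlinear convolution in $H$ when $X$ is only in $C^0_tL^2_\xi$. This is handled by the $L^1\hookrightarrow H^{-s}$ embedding together with the heat-kernel smoothing estimate. One must also check that the coefficientwise computation is legitimate, which holds because $\langle e^{A(t-s)}\partial_\xi X_s^2,e_k\rangle=-e^{-\alpha_k(t-s)}\langle\partial_\xi e_k,X_s^2\rangle$ by integration by parts, valid since $e_k$ vanishes at $\partial\Lambda$.
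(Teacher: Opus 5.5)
Your proof is correct, but it follows a different route from the paper's.

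\emph{The paper's route.} For weak $\Rightarrow$ mild, the paper first subtracts the stochastic convolution $z$, using that $z$ is the weak solution of the linear equation (Theorem 5.4 of \cite{DPZ14}). Then $Y=X-z$ is a pathwise weak solution of $Y'=AY+f$ with $f=\tfrac12\partial_\xi X^2\in C^0_tH^{-3/2-\varepsilon}_\xi$. The paper applies Ball's theorem \cite{Ball1977} (weak solutions of inhomogeneous Cauchy problems with a $C_0$-semigroup generator are mild) in $H^{-3/2-\varepsilon}_\xi$, and uses heat-kernel smoothing to return to $C^0_tL^2_\xi$. For mild $\Rightarrow$ weak it simply cites Lemma 2.8 of \cite{JP}.

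\emph{Your route.} You use that $A$ and $(-A)^\gamma$ are simultaneously diagonal in $(e_k)$. This reduces both directions to scalar linear SDEs with the continuous adapted forcing $-\tfrac12\langle\partial_\xi e_k,X_t^2\rangle$, which you solve by variation of constants. You then reassemble via Fourier coefficients, adding the density step $\pi_m f\to f$ in $H^2$ for mild $\Rightarrow$ weak.

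\emph{What each buys.} Your argument is elementary and self-contained, and it proves both implications explicitly. It also shows slightly more: testing the weak formulation against the eigenfunctions alone already yields the mild form, so in effect you reprove Ball's theorem in this diagonal setting. The paper's argument is shorter and does not use the spectral structure, so it would carry over to generators and noise covariances that are not jointly diagonalizable.

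\emph{A minor justification point, shared with the paper.} Your step ``$L^1\hookrightarrow H^{-s}_\xi$, hence $\partial_\xi X_s^2\in H^{-1-s}_\xi$'' is a bit quick. On the Dirichlet scale, $\partial_\xi$ does not map $H^{1+s}_\xi$ into $H^{s}_\xi$ for $s>1/2$, because derivatives of Dirichlet functions need not vanish at $\partial\Lambda$. The claim follows directly from
\[
|\langle X_s^2,\partial_\xi g\rangle|\le\|X_s\|^2\,\|\partial_\xi g\|_{C(\bar\Lambda)}\lesssim\|X_s\|^2\,\|g\|_{H^{1+s}_\xi},
\]
so nothing downstream is affected.
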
 

\begin{proof}  To prove that  the  mild formulation imply the  weak formulation, we can argue as in the proof of  Lemma 2.8 in  \cite{JP}. Let us prove the converse.   

By Theorem 5.4 of \cite{DPZ14}, the process $z$ defined in equation \eqref{eq:OUz} is the unique weak solution to the following linear SPDE
\[\mathd z(t)=Az(t)\mathd t+ (-A)^{\gamma}\mathd W_t,\quad z(0)=0.\]
This means that, if $X_t$ is a weak solution to equation \eqref{eq:Burgersmain1}, then the process $Y_t=X_t-z(t)$ is a weak solution to the following random PDE
\begin{equation}\label{eq:Yt1}
	\frac{\mathd Y_t}{\mathd t}=AY_t+\frac{1}{2}\partial_{\xi}\left(X_t^2\right),\quad Y_t=x,
\end{equation}  
where $Y$ is a process in $C^0_tL^2_{\xi}$. This means that, to prove that $X_t$ is a mild solution to equation \eqref{eq:Burgersmain1} is equivalent to prove that $Y_t$ is a mild solution to equation \eqref{eq:Yt1}. In order to obtain this last result let $f(t):=\frac{1}{2}\partial_{\xi}\left(X_t^2\right)$. With this notation, $Y_t$ solves an inhomogeneous PDE with linear part $A Y_t$ and affine part $f(t)$. Using that $L^1_{\xi} \subset H^{-\frac{1}{2}-\varepsilon}_{\xi}$ (where $\varepsilon$
is any strictly positive constant) we obtain that
the function $f \in 
C^0_t H^{-\frac{3}{2}-\varepsilon}_{\xi}$ . Since  $Y$ is a weak solution of a parabolic PDE in $L^2_{\xi}$, $Y$ is also a weak solution to the same parabolic PDE in $H^{-\frac{3}{2}-\varepsilon}_{\xi}$. Furthermore, the heat semigroup generated by $A$ is strongly continuous as a semigroup in $L^2_{\xi}$ and $H^{-\frac{3}{2}-\varepsilon}_{\xi}$. 
All these observations imply that we can apply the main theorem of \cite{Ball1977}, obtaining that $Y_t$ satisfies the equality 
\[Y_t=e^{A t}x+\int_0^t e^{A(t-s)}f(s)\mathd s\]
as a function belonging to $C^0_{t}H^{-\frac{3}{2}-\varepsilon}_{\xi}$. Let  $\varepsilon \in (0, 1/2)$. Since $x \in L^2_{\xi}$, and thus $e^{A t}x \in C^0_tL^2_{\xi}$, and, by (ii) in Remark \ref{remark:heatregularization}, $\int_0^{\cdot}e^{A(\cdot-s)}f(s)\mathd s \in C^0_tL^2_{\xi}$, we have proved the thesis. 
\end{proof}  
In the sequel we will also use the weak formulation of our SPDE.

 The proof of the next result is not difficult. Indeed one can follow \cite{G} or the strategy of the proof of  Theorem 14.2.4 in  \cite{DPZ96}  which treats the case   $\gamma=0.$ We give a sketch of proof in Appendix \ref{ppp2}. 
 \begin{proposition}\label{cia1}  
	For any $\gamma \in \left[0,\frac{1}{4}\right)$ and any $x \in H= L^2(\Lambda)$, there is a unique (mild) solution to equation \eqref{eq:Burgersmain1} (taking values in $C^0([0,T];H))$. 
\end{proposition}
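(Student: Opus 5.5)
The plan follows the classical Da Prato–Debussche strategy. We split $X_t = Y_t + z(t)$, where $z$ is the stochastic convolution \eqref{eq:OUz}. By Remark \ref{sys}, $z \in C([0,T];C(\bar\Lambda))$ $\P$-a.s.; in particular $z \in C([0,T];L^4(\Lambda))$. The remainder $Y$ should then solve, pathwise, the random PDE
\[
Y_t = e^{At}x + \frac12\int_0^t e^{A(t-s)}\partial_\xi\big((Y_s+z(s))^2\big)\,\mathd s,
\]
and we solve this deterministic problem for a fixed continuous path $z$. By Proposition \ref{weakmild}, a mild solution of this equation in $C^0_tL^2_\xi$ gives a mild (equivalently weak) solution $X = Y + z$ of \eqref{eq:Burgersmain1}. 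Since $z$ is adapted, adaptedness of $X$ follows once $Y$ is built as a limit of adapted approximations, or as a measurable function of $z$.

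\textbf{Local existence and uniqueness.} We run a fixed point in $C([0,T_0];H)$. The key estimate is the smoothing bound
\[
\|e^{At}\partial_\xi f\|_{L^2} \lesssim t^{-3/4}\|f\|_{L^1},
\]
which holds because $L^1 \subset H^{-1/2-\varepsilon}$; the resulting exponent $t^{-3/4-\varepsilon/2}$ is still integrable. Apply it with $f = (Y+z)^2$, noting $\|(Y+z)^2\|_{L^1} \le \|Y+z\|^2_{L^2}$. This shows that the map
\[
\Gamma(Y)_t = e^{At}x + \tfrac12\int_0^t e^{A(t-s)}\partial_\xi\big((Y_s+z_s)^2\big)\,\mathd s
\]
is a contraction on a ball of $C([0,T_0];H)$, with $T_0$ depending only on $\|x\|$ and $\sup_t\|z(t)\|$. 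Continuity in time of the integral follows from Remark \ref{remark:heatregularization} (ii). This yields a unique local mild solution, and the standard patching argument gives uniqueness on any interval where solutions exist.

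\textbf{Global existence via an a priori $L^2$ bound.} This is the main obstacle, since the fixed-point time depends on $\|Y\|$. I would prove the bound first for the Galerkin approximation
\[
\frac{\mathd}{\mathd t}Y^n = AY^n + \tfrac12\pi_n\partial_\xi\big((Y^n+\pi_n z)^2\big)
\]
and then pass to the limit; alternatively, one can compute directly on the mild solution using the smoothing of Remark \ref{remark:heatregularization} (i) to justify the energy identity. Testing with $Y$ gives
\[
\tfrac12\frac{\mathd}{\mathd t}\|Y\|^2 + \|Y\|_{1}^2 = -\tfrac12\big\langle \partial_\xi Y,(Y+z)^2\big\rangle .
\]
The cubic term $\langle \partial_\xi Y, Y^2\rangle$ vanishes by the Dirichlet boundary conditions. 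The remaining terms are bounded by
\[
|\langle\partial_\xi Y, Yz\rangle| \le \|Y\|_1\|Y\|_{L^4}\|z\|_{L^4}
\qquad\text{and}\qquad
|\langle\partial_\xi Y, z^2\rangle| \le \|Y\|_1\|z\|_{L^4}^2 .
\]
Next use Gagliardo–Nirenberg, $\|Y\|_{L^4}\lesssim \|Y\|^{3/4}\|Y\|_1^{1/4}$, followed by Young's inequality. This absorbs the $\|Y\|_1$ factors and gives
\[
\frac{\mathd}{\mathd t}\|Y\|^2 \le C\big(1+\|z\|_{L^4}^{8/3}\big)\|Y\|^2 + C\|z\|_{L^4}^4 .
\]
Gronwall's lemma then bounds $\sup_{t\le T}\|Y_t\|$ by a finite quantity depending only on $\|x\|$ and $\sup_t\|z(t)\|_{L^4}$. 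Hence the local solution cannot blow up, and iterating the local step gives a global solution in $C([0,T];H)$.

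**Justifying the energy identity.** The delicate point is rigor: $Y$ is only known in $C_tL^2$, so the computation above is formal. I would do it on the Galerkin system, where it is legitimate because $\pi_n$ is self-adjoint and commutes with $A$; the cubic term still vanishes after projection since $\langle \pi_n\partial_\xi(Y^2),Y\rangle = \langle \partial_\xi(Y^2),Y\rangle$ for $Y$ in the range of $\pi_n$. The bounds are uniform in $n$, using \eqref{interpo} to control $\|\pi_n z\|_{L^4}$. I would then show $Y^n \to Y$ in $C([0,T];H)$ on each local interval by the contraction estimate, and extend to all of $[0,T]$ using the uniform bound.
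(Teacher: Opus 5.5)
Your proposal is correct and follows essentially the paper's own argument in Appendix B: local well-posedness by a contraction, then an a priori $L^2$ energy bound on $Y=X-z$ (cubic term vanishing, Gronwall) that rules out blow-up. The differences are only in packaging: the paper localizes via the truncation $\tilde\pi_n$ onto $B(0,n)$ and the stopping times $\tau_n$ rather than a pathwise maximal solution; it controls the cross terms with $\|z(t)\|_{C(\bar\Lambda)}$ rather than $\|z(t)\|_{L^4}$ plus Gagliardo--Nirenberg; and it justifies the energy identity through a regular approximation of $z$ rather than through Galerkin approximations.
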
 
 
 We are going to consider also an approximation of equation \eqref{eq:Burgersmain1}. Namely, let $\pi_n$ be the  orthogonal projection onto the subspace $H_n$, which is the $n$ dimensional subspace of $H$ generated by the first $n$ eigenfunctions of the Laplacian $A$ with Dirichlet boundary conditions (cf. \eqref{pp1}). 
  We define, setting $x_n:=\pi_n(x),$
\begin{equation}\label{eq:Xapparoximation1}
	 \mathd X_t^{n,x_n}=(AX_t^{n,x_n}+\frac{1}{2}\pi_n\left(\partial_{\xi}(X_t^{n,x_n})^2\right))\mathd t+(-A)^{\gamma}\pi_n\mathd W_t,\quad X^{n,x_n}_0=x_n,    
\end{equation} 
where $x$ is a generic element of $H$. Equation \eqref{eq:Xapparoximation1} is a finite dimensional stochastic differential equation (SDE) defined on the space $H_n$. 
  
We write $z^n:=\pi_n(z)$ and $Y^{n,x_n}:=X^{n,x_n}-z^n_{}$. The process $Y^{n,x_n}$ solves the  following random ODE:
\begin{equation}\label{eq:Yapproximation1}  
	\frac{d}{dt}Y^{n,x_n}=AY^{n,x_n}_t+\frac{1}{2}\pi_n\left(\partial_{\xi}(Y^{n,x_n}_t+z^n_{}(t))^2\right),\;\; \quad Y^{n,x_n}_0=\pi_n(x). 
\end{equation}  
 Solving \eqref{eq:Yapproximation1} is equivalent to solve     
 \eqref{eq:Xapparoximation1}. A unique global solution   can be obtained since equation \eqref{eq:Xapparoximation1} has the structure of the following
   SDE on   $\R^n$:
\begin{equation}\label{easy}
	\mathd X_t=AX_t dt +   b(X_t)\mathd t +C dW_t,\quad X_0^{}=x\in\R^n 
\end{equation}
where $A, C \in \R^{n \times n}$ are matrices  and $b: \R^n \to \R^n$ is a locally Lipschitz function such that $b(x)\cdot x =0$, $x \in \R^n$. It is not difficult  to obtain a unique  global solution to \eqref{easy} by means of the Lyapunov function $|x|^2$, using the It\^o formula.

Using the previous notations we have the following result.  
 
\begin{theorem}\label{theorem:existenceconvergence}
	For any $\gamma \in \left[0,\frac{1}{4}\right)$ and any $x \in H= L^2(\Lambda)$, let $X^{x}$ be the  unique (mild) solution to equation \eqref{eq:Burgersmain1}. The sequence of processes $X^{n,x_n}$ converges to $X^{x}$ in $C^0([0,T];H)$, $\P$-a.s.. 
\end{theorem}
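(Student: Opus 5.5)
We will argue pathwise, for a fixed $\omega$ in a set of full probability on which $z\in C([0,T];C(\bar\Lambda))$ (Remark \ref{sys}). We work with the decompositions $X^x=Y+z$ and $X^{n,x_n}=Y^n+z^n$, where $Y$ solves \eqref{eq:Yt1} in mild form and $Y^n=Y^{n,x_n}$ solves \eqref{eq:Yapproximation1}. The first step is to show $z^n=\pi_n z\to z$ in $C([0,T];L^4(\Lambda))$. Pointwise in $t$ this follows from \eqref{interpo} together with density of trigonometric polynomials in $L^4$. Uniformity in $t$ follows because $\{z(t):t\in[0,T]\}$ is compact in $L^4$ and the operators $\pi_n$ are uniformly bounded on $L^4$, so pointwise convergence becomes uniform on this compact set. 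The same argument gives $x_n\to x$ in $H$. It then suffices to prove $Y^n\to Y$ in $C([0,T];H)$.

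The second step is an a priori bound on $\sup_n\sup_{t\le T}\|Y^n_t\|$ and $\sup_n\int_0^T\|Y^n_s\|_{H^1}^2ds$, depending only on $\|x\|$ and $\sup_n\|z^n\|_{C_tL^4_\xi}$. We take the scalar product of \eqref{eq:Yapproximation1} with $Y^n$; since $Y^n\in H_n$, the projection $\pi_n$ disappears. The cubic term $\langle \partial_\xi (Y^n)^2,Y^n\rangle$ vanishes. The remaining terms are $\langle\partial_\xi(2Y^nz^n+(z^n)^2),Y^n\rangle$. After integration by parts these are bounded by $\|Y^n\|_{H^1}\bigl(\|Y^n\|_{L^4}\|z^n\|_{L^4}+\|z^n\|_{L^4}^2\bigr)$. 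Using Gagliardo--Nirenberg, $\|Y\|_{L^4}\lesssim\|Y\|^{3/4}\|Y\|_{H^1}^{1/4}$, and Young's inequality, the $H^1$-norm is absorbed into the dissipation, leaving
\[
\frac{d}{dt}\|Y^n\|^2+\|Y^n\|_{H^1}^2\lesssim \|z^n\|_{L^4}^{8/3}\|Y^n\|^2+\|z^n\|_{L^4}^4 .
\]
Gronwall's lemma then gives the uniform bound. The same energy estimate applies to $Y$. For $Y$ it should be justified via the weak formulation, or obtained directly as the limit once convergence is shown; the cleanest route is to use only the bounds for $Y^n$ and estimate the difference in the next step.

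The third step, which is the main obstacle, is to estimate the difference $R^n=Y^n-\pi_nY$, or equivalently to compare $Y^n$ with $Y$. The regularity of $Y$ is the delicate point, since the energy identity for $Y$ itself is not directly available. I would first prove that $Y\in L^2(0,T;H^1)\cap C([0,T];H)$. This follows either from the mild formula combined with Remark \ref{remark:heatregularization}, or from Proposition \ref{cia1} whose construction supplies it. Next I set $D^n=Y^n-\pi_nY$, which satisfies
\[
\frac{d}{dt}D^n=AD^n+\tfrac12\pi_n\partial_\xi\bigl((Y^n+z^n)^2-(Y+z)^2\bigr).
\]
Testing with $D^n$, writing $(Y^n+z^n)^2-(Y+z)^2=(D^n+\rho^n)(Y^n+z^n+Y+z)$ with $\rho^n=\pi_nY-Y+z^n-z$, and using the same Gagliardo--Nirenberg and Young manipulations, I expect
\[
\frac{d}{dt}\|D^n\|^2\le C\bigl(\|Y^n\|_{H^1}^2+\|Y\|_{H^1}^2+\|z\|_{L^4}^4+1\bigr)\|D^n\|^2+C\|\rho^n\|_{L^4}^2\bigl(\|Y^n\|_{L^4}^2+\|Y\|_{L^4}^2+\|z\|_{L^4}^2+\|z^n\|_{L^4}^2\bigr).
\]
The coefficient in front of $\|D^n\|^2$ is integrable in time, uniformly in $n$, by the second step. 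In the source term, $\int_0^T\|\rho^n\|_{L^4}^2\,(\dots)\,ds\to0$. For the $z$-part of $\rho^n$ this comes from the first step. For the $Y$-part, $\|\pi_nY-Y\|_{L^4}\lesssim\|\pi_nY-Y\|^{3/4}\|\pi_nY-Y\|_{H^1}^{1/4}$, and we conclude by dominated convergence in $t$ together with the $L^2_tH^1_\xi$ bounds. Combining $\|D^n_0\|=0$ with Gronwall's lemma gives $\sup_t\|D^n_t\|\to0$.

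The conclusion then follows from $\sup_t\|\pi_nY_t-Y_t\|\to0$, which holds by compactness of $Y([0,T])$ in $H$, and from the first step for $z$. Together these give $X^{n,x_n}\to X^x$ in $C([0,T];H)$, $\P$-a.s.
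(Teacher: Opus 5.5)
Your proof is correct, but it takes a genuinely different route from the paper. The paper argues by compactness. From the uniform bounds \eqref{ddt}--\eqref{ddt1} it derives a uniform bound for $Y^{n,x_n}-e^{At}\pi_n x$ in $C^{\rho}_tH^{\rho}_{\xi}$, using space--time regularization of the heat convolution, interpolation, Sobolev embeddings and the product estimate \eqref{eq:algebra}. It then extracts a subsequence converging in $C^0([0,T];H)$, passes to the limit in the weak formulation, and identifies the limit with $X^x$ via Proposition \ref{weakmild} and the uniqueness in Proposition \ref{cia1}.

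You instead run a stability estimate. You compare $Y^n$ with $\pi_nY$, where $Y=X^x-z$ comes from the known solution, and close an $L^2$ energy inequality for $D^n=Y^n-\pi_nY$. This works because $D^n_0=0$, the coefficient has time integral bounded uniformly in $n$ (by your Step 2), and the source term has time integral tending to zero. Your route is more elementary: you only use Gagliardo--Nirenberg, Young, Gronwall and the uniform $L^4$-boundedness \eqref{interpo} of $\pi_n$. It gives convergence of the whole sequence without subsequences or an appeal to uniqueness, and in fact yields uniqueness as a by-product. It is also quantitative in $\sup_t\|\pi_nY_t-Y_t\|$ and $\sup_t\|z^n(t)-z(t)\|_{L^4_\xi}$. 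The price is that you need regularity of the limit, $Y\in L^2_tH^1_\xi$. The paper's scheme uses only bounds on the approximations, which is why it can be reused verbatim for the derivative process in Remark \ref{remark:continuityeta}.

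The step you should make precise is this regularity. It does not follow from a single application of Remark \ref{remark:heatregularization}. From $\partial_\xi X^2\in C^0_tH^{-3/2-\varepsilon}_\xi$ you only get the convolution term in $C^0_tH^{1/2-2\varepsilon}_\xi$, and part (i) of that remark excludes the endpoint $m_1+m_2=1$. Nor is it part of the statement of Proposition \ref{cia1}. One bootstrap step suffices:
\begin{itemize}
\item $e^{At}x\in L^2_tH^1_\xi\cap C^0_tL^2_\xi$ and the convolution term lies in $C^0_tL^4_\xi$, so $X\in L^4_tL^4_\xi$ and hence $\partial_\xi X^2\in L^2_tH^{-1}_\xi$.
\item The endpoint $L^2$ maximal regularity is elementary coefficientwise, since $\|\int_0^{\cdot}e^{-\alpha_k(\cdot-s)}f_k(s)\,\mathd s\|_{L^2_t}\le\alpha_k^{-1}\|f_k\|_{L^2_t}$, and it gives $Y\in L^2_tH^1_\xi$.
\end{itemize}
Also, in your source term the weight $\|Y^n\|_{L^4_\xi}^2+\dots$ depends on $n$, so dominated convergence does not apply directly. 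Use H\"older in time instead: pair the uniform $L^4_t$-bound on $\|Y^n\|_{L^4_\xi}^2$ from Step 2 with $\|\pi_nY-Y\|_{L^4_\xi}^2\to 0$ in $L^{4/3}_t$. The latter follows from Gagliardo--Nirenberg and $\sup_t\|\pi_nY_t-Y_t\|\to0$.
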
 
\begin{proof} In order to prove the convergence of $X^{n,x_n}$ to $X^x$ we use a compactness method based on  a-priori estimates.
 
We need to use the following estimates: there exists $\delta_0>0$ small enough (possibly depending on $\gamma$ but independent of $n$) such that for any $\omega$, $\P$-a.s., there exist $C_T = C_T(\omega, \| x \|)>0$ and $C'_T =C'_T(\omega, \delta_0)$:
\begin{gather} \label{ddt}
\|Y^{n,x_n}\|_{L^{\infty}_t L^{2}_{\xi}} + \|Y^{n,x_n}\|_{L^{2}_tH^1_{\xi}} \le C_T.
\\ \label{ddt1} 
 \|z^n\|_{L^4_{t}W^{\delta_0,4}_{\xi}} = \|(-A)^{\delta_0/2} z^n\|_{L^4_{t}L^{2}_{\xi}} \le C_T'.
\end{gather}
Estimate \eqref{ddt} are quite standard and  can be obtained arguing as in the proof of Theorem 14.2.4 in \cite{DPZ96}. Estimate \eqref{ddt1} can be obtained following the proof of Proposition 2.1 in \cite{DPD}. Note that \eqref{ddt1}
 is a special case of \eqref{finite} when $\alpha =1$ and $0<\delta_0 < 1/4 -\gamma$.

 We will obtain that for 	$\rho>0$ small enough, there is a random variable $E_{x,\rho}:\Omega \rightarrow \R_+$ such that, $\P$-a.s.,
	\begin{equation}\label{eq:inequalitysolconvergence0}
		\|Y^{n,x_n}-e^{A t}\pi_n(x)\|_{C^{\rho}_tH^{\rho}_{\xi}} \leqslant E_{x,\rho}.
	\end{equation} 
	Indeed we have that 
	\begin{equation}\label{eq:inequalitysolconvergence1}
		\|Y^{n,x_n}-e^{A t}\pi_n(x)\|_{C^{\rho}_tH^{\rho}_{\xi}}=\frac{1}{2}\left\|\int_0^{\cdot}e^{A(\cdot -s)}\partial_{\xi}(X^{n,x_n}_s)^2\mathd s\right\|_{C^{\rho}_tH^{\rho}_{\xi}}. 
	\end{equation} 
 In order to bound the right-hand side of \eqref{eq:inequalitysolconvergence1}, we first note that, if we choose $\theta_1 \in (0,1)$ such that $\frac{1}{2} < \theta_1 <\frac{3}{4}-4 \rho$, and thus 
	\[\frac{1-\theta_1}{2} < \frac{1}{4}, \quad 1-\theta_1-\frac{1}{2}> 4 \rho -\frac{1}{4} \] 
	by the Sobolev embedding (see Remark \ref{remark:sobolevebmedding} and Remark \ref{remark:sobolevebmedding2}) and the inclusion of $L^p$-spaces on compact intervals, we have that, for any $f\in L^{\frac{2}{1-\theta_1}}_tH^{1-\theta_1}_{\xi}$, 
	\[\|f\|_{L^{4}_tW^{4\rho,4}_{\xi}} \lesssim \|f\|_{L^{\frac{2}{1-\theta_1}}_tH^{1-\theta_1}_{\xi}}.\] 
	Furthermore, by the interpolation inequality of Remark \ref{remark:interpolation1}, for any $f \in L^{\infty}_tL^2_{\xi} \cap L^2_tH^1_{\xi}$, we obtain, $\P$-a.s., 
	\[\|f\|_{L^{\frac{2}{1-\theta_1}}_tH^{1-\theta_1}_{\xi}} \lesssim \|f\|_{L^{\infty}_tL^2_{\xi}}^{\theta_1}\|f\|_{L^2_tH^1_{\xi}}^{1-\theta_1}.\]
	Using the previous observations, we obtain 
	\begin{multline}\label{eq:inequalitysolconvergence2}
		\|X^{n,x_n} \|_{L^4_{t}W^{4\rho,4}_{\xi}}^2  \lesssim\|z^n\|_{L^4_{t}W^{4\rho,4}_{\xi}}^2 + \|Y^{n,x_n} \|_{L^4_{t}W^{4\rho,4}_{\xi}}^2\\
		\lesssim \|z^n\|_{L^4_{t}W^{4\rho,4}_{\xi}}^2 + \|Y^{n,x_n}\|_{L^{\infty}_tL^{2}_{\xi}}^{2 \theta_1}\|Y^{n,x_n}\|_{L^{2}_tH^1_{\xi}}^{2(1-\theta_1)} \le C''_T,
	\end{multline}
if $0< 4 \rho \le \delta_0$, 	using estimates \eqref{ddt} and \eqref{ddt1} ($C''_T $ depends on $\omega$, $\| x\|$ and $\delta_0$).  
	 
By the regularization properties of the derivatives of heat kernel and the Sobolev inequality (see Remark \ref{remark:heatregularization} and Remarks \ref{remark:sobolevebmedding}, and \ref{remark:sobolevebmedding2} respectively) we obtain 
	\begin{multline}\label{eq:inequalitysolconvergence3}
		\left\|\int_0^{\cdot}e^{A(\cdot -s)}\partial_{\xi}(X^{n,x_n}_s)^2\mathd s\right\|_{C^{\rho}_tH^{\rho}_{\xi}} \lesssim \left\|\int_0^{\cdot}e^{A(\cdot -s)}\partial_{\xi}(X^{n,x_n}_s)^2\mathd s\right\|_{H^{\frac{1}{2}+\rho}_tH^{\rho}_{\xi}} \\
		\lesssim \|\partial_{\xi}(X^{n,x_n})^2 \|_{H^{\frac{1}{2}+\rho-m_1}_t H^{\rho-2m_2}_{\xi}}  \lesssim  \|(X^{n,x_n})^2 \|_{L^2_tH^{1+\rho-2m_2}_{\xi}}
	\\	 \lesssim \|(X^{n,x_n})^2 \|_{L^2_tH^{4\rho}_{\xi}}
		 \lesssim  \|X^{n,x_n} \|_{L^4_tW^{4\rho,4}_{\xi}}^2,
	\end{multline}
	where $m_1=\frac{1}{2}+\rho$, $\frac{3}{8} - \frac{3}{2} \rho < m_2< \frac{3}{8} +\frac{5}{2}\rho$. Note that  
	\[m_1+m_2<1 
	\]
	when $\rho<\frac{1}{36} \wedge \frac{\delta_0}{4}$ (see \eqref{ddt1}).
	In the last step we also use  that, for any $f,g\in W^{4\rho,4}_{\xi}$ then $f\cdot g \in H^{4\rho}_{\xi}$ and we have 
	\begin{equation}\label{eq:algebra}
		\|f \cdot g \|_{H^{4\rho}_{\xi}} \lesssim  \|f \|_{W^{4\rho,4}_{\xi}} \|g\|_{W^{4\rho,4}_{\xi}}.
	\end{equation}
	We refer to Theorem 5.1 of \cite{BH2021} for a proof of inequality \eqref{eq:algebra} (in the notation of \cite{BH2021} we have: $s_1 = s_2 = s =4 \rho; p_1 = p_2 =4; p =2$).
	
	If we insert in inequality \eqref{eq:inequalitysolconvergence1} the relation \eqref{eq:inequalitysolconvergence3}, by inequality \eqref{eq:inequalitysolconvergence2} 
	we obtain \eqref{eq:inequalitysolconvergence0}. Since the space $C^{\rho}([0,T];H^{\rho}(\Lambda))$ is compactly embedded in $C^0([0,T];H)$ and since $e^{At}\pi_n(x)$ and $z^n_{}$ converge to $e^{At}x$ and $z_{ }$ in $C^0([0,T];H)$, respectively, we obtain that  possibly passing to a subsequence (still denoted by   $X^{n,x_n}$)   $X^{n,x_n}$ converges to some process $\bar{X}$ in $C^0([0,T];H)$, $\P$-a.s.
  
    We now show that $\bar{X}$ is a (weak) solution to equation \eqref{eq:weaksolution}.

   Since $X^{n,x_n}$ is a solution to equation \eqref{eq:Xapparoximation1}, then, for every smooth function $f\in C^{\infty}(\Lambda)$  with compact support, we have  
  \begin{align}\label{eq:weakxn}
  	\langle X^{n,x_n}_t- x, \pi_n(f) \rangle=&\int_0^t\langle X^{n,x_n}_s ,\Delta \pi_n (f) \rangle \mathd s -\frac{1}{2}\int_0^t \langle  (X^{n,x_n}_s)^2, \partial_{\xi}\pi_n(f) \rangle \mathd s  \\ \notag
  	& +  \langle  (-A)^{\gamma} \pi_n (f), W_t\rangle.  
  	\end{align}   	
   Since $f$ is a smooth functions with compact support is a $H^s_{\xi}$ function for every $s \in \mathbb{R}$. This means that $\pi_n(f)$ converges to $f$ in $H^s_{\xi}$ for any  $s \in \mathbb{R}_+$,    and thus, by the Sobolev embedding of Remark \ref{remark:sobolevebmedding}, it converges (strongly) in $C^k_{\xi}$ for every $k\in \mathbb{N}$. Thus $\Delta \pi_n(f)$ converges to $\Delta f$ in $C^k_{\xi}$ too. Since $X^{n,x_n}$ converges in $C^0([0,T];H)$ to $\bar{X}$, $\P$-a.s,  we obtain easily passing to the limit as $n \to \infty $ that  $\bar{X}$ solves equation \eqref{eq:weaksolution}.     Thus,  by  Propositions \ref{weakmild} and \ref{cia1}   we obtain that $\bar{X}=X^{x}$. This fact proves that $X^{n,x_n}$ converges to $X^x$ almost surely. 
\end{proof}
 \begin{remark} Note that following the  proof of Theorem \ref{theorem:existenceconvergence} one  obtains   that $X^{x_n,n}$ converges strongly in $C^{\rho-\varepsilon}_tH^{\rho-\varepsilon}_\xi$ to $X^x$ (with  $0<\varepsilon<\rho$).
\qed  
 \end{remark}

For every  $\alpha\ge 0$ we introduce the process 
\begin{equation} \label{zz}
z_{\alpha}(t)=\int_0^t e^{(A-\alpha)(t-s)}(-A)^{\gamma}\mathd W_s.
\end{equation}
 which solves $\mathd z_{\alpha}(t)=(A-\alpha) z_{\alpha}(t)\mathd t+ (-A)^{\gamma}\mathd W_t,\quad z_{\alpha}(0)=0$. 

 Here we do not stress the fact that $z_{\alpha}$ does depend also on the parameter $\gamma \in \left[0,\frac{1}{4}\right)$ in order to not introduced a too heavy notation. Note that if $\kappa \ge 0$ verifies $\kappa + \gamma <1/4$ then we have 
\begin{equation}\label{s33}
(-A)^{\kappa}z_{\alpha}(t) = \int_0^t e^{(A-\alpha)(t-s)}(-A)^{\gamma + \kappa}\mathd W_s.
\end{equation}
We have two results  on the process $z_{\alpha}$. The first result is about continuous dependence of $z_{\alpha}$ with respect to its variables. The proof is given in Appendix \ref{app1}.

\begin{proposition}\label{continuo}
 Consider $\gamma \in \left[0,\frac{1}{4}\right)$. Let $\kappa \ge 0$ such that  $\kappa + \gamma <1/4$. Then   there exists a version of the random field $z_{\alpha}(t,\xi)$ (in the variables $\alpha \geqslant 0$, $t\in[0,T]$ and $\xi\in\Lambda$) which is denoted by $ \bar z_{\alpha}(t,\xi)$ such that $(-A)^{\kappa} \bar z_{\alpha}(t,\xi)$ is continuous in all its variables. 
\end{proposition}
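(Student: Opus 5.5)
The plan is to apply the Kolmogorov--Chentsov continuity theorem to the Gaussian random field
\[
(\alpha,t,\xi)\mapsto (-A)^{\kappa}z_{\alpha}(t,\xi)=\sum_{k\ge1}\alpha_k^{\gamma+\kappa}\int_0^t e^{-(\alpha_k+\alpha)(t-s)}\,\mathd\beta_k(s)\,e_k(\xi),
\]
using \eqref{s33}. The parameter set is $[0,\infty)\times[0,T]\times\bar\Lambda$. Since continuity is a local property, it suffices to work on each compact box $[0,N]\times[0,T]\times\bar\Lambda$ and then glue the resulting versions; they agree on countable dense sets, hence everywhere. Because the field is centered Gaussian, all moments are controlled by the second moment: $\E|\Phi(p)-\Phi(p')|^{2m}\le C_m\bigl(\E|\Phi(p)-\Phi(p')|^2\bigr)^m$. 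The whole proof therefore reduces to showing
\[
\E|\Phi(\alpha,t,\xi)-\Phi(\alpha',t',\xi')|^2\lesssim |\alpha-\alpha'|^{\theta}+|t-t'|^{\theta}+|\xi-\xi'|^{\theta}
\]
for some $\theta>0$, uniformly on the box. Taking $m$ large enough that $m\theta>3$ then gives a continuous (indeed Hölder) version.

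I would split the increment into three pieces, changing one variable at a time, and bound each variance as a series in $k$. The key tool is the interpolation trick $|a-b|\le |a-b|^{\theta}(|a|+|b|)^{1-\theta}$, which trades a small power of regularity for a small Hölder exponent. The surplus comes from the gap $\varepsilon:=\tfrac14-\gamma-\kappa>0$: the basic variance is $\sum_k \alpha_k^{2(\gamma+\kappa)}\frac{1-e^{-2(\alpha_k+\alpha)t}}{2(\alpha_k+\alpha)}\lesssim\sum_k k^{4(\gamma+\kappa)-2}$, which converges with room $k^{-1-4\varepsilon}$.

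The three pieces are handled as follows.

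\emph{Space.} Use $|e_k(\xi)-e_k(\xi')|\lesssim (k|\xi-\xi'|)^{\theta}$ with $2\theta<4\varepsilon$. The resulting series is summable.

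\emph{Parameter $\alpha$.} By Itô's isometry the variance is $\sum_k\alpha_k^{2(\gamma+\kappa)}\int_0^t\bigl(e^{-(\alpha_k+\alpha)r}-e^{-(\alpha_k+\alpha')r}\bigr)^2\mathd r$. Use $|e^{-(\alpha_k+\alpha)r}-e^{-(\alpha_k+\alpha')r}|\le e^{-\alpha_k r}\min(1,r|\alpha-\alpha'|)\le e^{-\alpha_k r}(r|\alpha-\alpha'|)^{\theta}$. Integrating $r^{2\theta}e^{-2\alpha_k r}$ gives $\alpha_k^{-1-2\theta}$, so the series is summable. Note that here a dependence on $N$ is harmless, and in fact not even needed.

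\emph{Time.} This is the standard Ornstein--Uhlenbeck computation for $t<t'$. It splits into the part $\int_t^{t'}$, which gives $\alpha_k^{2(\gamma+\kappa)}\min(t'-t,\alpha_k^{-1})$, and the part $\int_0^t(e^{-\lambda(t'-s)}-e^{-\lambda(t-s)})^2$ with $\lambda=\alpha_k+\alpha$, which gives $\lambda^{-1}\min(1,\lambda(t'-t))^2$. Interpolating each as $\lesssim \alpha_k^{-1+\theta}|t-t'|^{\theta}$ yields summability for $\theta<2\varepsilon$.

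The main obstacle is only bookkeeping: ensuring the exponents leave a uniform $\theta>0$, with constants uniform in $\alpha\ge0$. Adding $\alpha$ only helps, since $e^{-(\alpha_k+\alpha)r}\le e^{-\alpha_k r}$. Having a version continuous on every box, and noting that $\xi\mapsto$ continuity extends to $\bar\Lambda$, gives the version $\bar z_\alpha$ with $(-A)^\kappa\bar z_\alpha$ jointly continuous. Consistency between $\kappa$ and $\kappa=0$ follows because the versions are defined through the same series.
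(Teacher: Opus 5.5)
Your proposal is correct and follows essentially the same route as the paper: Gaussian moment equivalence plus the Kolmogorov test, after splitting the second moment of the increment into time, space and $\alpha$ pieces, each bounded by a convergent series in $k$. The paper reduces to $\kappa=0$ via \eqref{s33} and uses the Lipschitz bound $|e^{-\alpha\sigma}-e^{-\alpha'\sigma}|\le|\alpha-\alpha'|\sigma$ for the $\alpha$-increment, where you interpolate with $\min(1,r|\alpha-\alpha'|)$; your localization to boxes $[0,N]\times[0,T]\times\bar\Lambda$ and your requirement $m\theta>3$ for a three-parameter field spell out two points the paper's write-up passes over quickly.
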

Note that, for any $n \ge 1$, 
\begin{equation}\label{pinn}
\pi_n (-A)^{\kappa} \bar z_{\alpha}(t,\cdot)(\xi) = (-A)^{\kappa} \pi_n \bar z_{\alpha}(t,\xi) \; \text{ is continuous in all its variables,} 
\end{equation}  
for any $n \ge 1$. The second  result is     
similar to 
   Proposition 2.1 in \cite{DPD} and concerns $\alpha \ge 1$. 
\begin{proposition} \label{prop:K} 
Let $\alpha \ge 1$. Consider $\gamma \in \left[0,\frac{1}{4}\right)$,  $1 <  p <+\infty$, $\delta \in (0,1)$, $\varepsilon>0$ and $\kappa \geqslant 0$ such that $\varepsilon + \kappa < \frac{1}{4}-\gamma$.
Then there exist  non-negative random variables $K_{\eps, \delta, p}^{ ( \kappa)}$
 which is measurable with respect to the filtration generated  by $(W_t)_{t\ge 0}$, which have all bounded moments and
such that for all $\alpha \geq 1$ and $t \ge 0$, we have
$$
\| (-A)^{\kappa} z_{\alpha}(t) \|_{L^p(\Lambda)} \leq \alpha^{- 1/4 + \eps + \gamma+ \kappa} (1 + t^{\delta}) K_{\eps, \delta, p}^{(\kappa)},\;\;\; \P{-}a.s..
$$  
 \end{proposition}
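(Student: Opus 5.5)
We follow the strategy of Proposition 2.1 in \cite{DPD}, adapted to the colored noise $(-A)^{\gamma}$. By \eqref{s33} it suffices to treat $\kappa=0$ with $\gamma$ replaced by $\gamma+\kappa<1/4$, so we set $\gamma':=\gamma+\kappa$ and study $z_\alpha(t)=\int_0^t e^{(A-\alpha)(t-s)}(-A)^{\gamma'}\mathd W_s$. We may also assume $p$ is large, since $L^p(\Lambda)$ norms increase in $p$ on the unit interval.

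\textbf{Step 1: factorization.} For $\beta\in(0,1/2)$ we write
\[
z_\alpha(t)=\frac{\sin(\pi\beta)}{\pi}\int_0^t (t-s)^{\beta-1}e^{(A-\alpha)(t-s)}Y_{\alpha}(s)\,\mathd s,
\qquad
Y_\alpha(s)=\int_0^s (s-r)^{-\beta}e^{(A-\alpha)(s-r)}(-A)^{\gamma'}\mathd W_r .
\]

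\textbf{Step 2: moments of $Y_\alpha$ and $\alpha$-decay.} $Y_\alpha(s,\xi)$ is Gaussian with variance
\[
\sum_k \alpha_k^{2\gamma'}e_k(\xi)^2\int_0^s r^{-2\beta}e^{-2(\alpha_k+\alpha)r}\,\mathd r
\lesssim \sum_k \alpha_k^{2\gamma'}(\alpha_k+\alpha)^{2\beta-1}.
\]
We aim for the bound $\alpha^{-2(1/4-\gamma'-\eps)}$. To get it, use $(\alpha_k+\alpha)^{2\beta-1}\le \alpha^{-\theta}(\alpha_k+\alpha)^{2\beta-1+\theta}$ with $\theta=1/2-2\gamma'-2\eps$. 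The remaining sum $\sum_k k^{4\gamma'}k^{2(2\beta-1+\theta)}=\sum_k k^{4\beta-1-4\eps}$ converges provided $\beta<\eps$. We therefore take $\beta$ small, below $\eps$, and also larger than $1/p$ (possible since $p$ is large). Gaussian hypercontractivity then gives $\E\|Y_\alpha(s)\|_{L^p}^p\lesssim \alpha^{-p(1/4-\gamma'-\eps)}$, uniformly in $s$.

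\textbf{Step 3: pathwise bound.} By Hölder's inequality, since $\beta>1/p$,
\[
\|z_\alpha(t)\|_{L^p}\lesssim t^{\beta-1/p}\,\|Y_\alpha\|_{L^p(0,t;L^p)},
\]
using that $e^{(A-\alpha)t}$ is a contraction on $L^p$. This yields a bound of size $\alpha^{-1/4+\gamma'+\eps}$ times a random variable, but only for a fixed $\alpha$ and on a bounded time interval.

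\textbf{Step 4: uniformity in $\alpha\ge1$ and $t\ge0$.} This is the main obstacle, because the random variable $K$ must not depend on $\alpha$ or $t$.

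\emph{Uniformity in $\alpha$.} Define
\[
K_1=\sup_{\alpha\ge1}\alpha^{1/4-\gamma'-\eps}\sup_{t\in[0,1]}\|z_\alpha(t)\|_{L^p}.
\]
We control $K_1$ with a Kolmogorov/Garsia--Rodemich--Rumsey argument in $\alpha$. The key estimate is on increments: $\partial_\alpha z_\alpha = -\int_0^t (t-s)e^{(A-\alpha)(t-s)}(-A)^{\gamma'}\mathd W_s$. By the same computation as in Step 2, this has $L^p$ moments of order $\alpha^{-1-1/4+\gamma'+\eps}$. Spending a small portion of $\eps$ and working on dyadic blocks $\alpha\in[2^j,2^{j+1}]$ with a union bound then gives finite moments for $K_1$, with a weight that is summable in $j$. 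Proposition \ref{continuo} guarantees the continuous version needed for the supremum to be well defined.

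\emph{Growth in $t$.} On each interval $[m,m+1]$ we decompose
\[
z_\alpha(t)=e^{(A-\alpha)(t-m)}z_\alpha(m)+\tilde z^{(m)}_\alpha(t-m),
\]
where $\tilde z^{(m)}_\alpha$ has the same law as $z_\alpha$ on $[0,1]$. This gives a family of random variables $K^{(m)}$ with uniformly bounded moments. Setting $K=\sup_m K^{(m)}(1+m)^{-\delta}$, Chebyshev's inequality with a high moment $q>1/\delta$ and a union bound make $K$ finite with all moments bounded. Finally, the contraction $\|e^{(A-\alpha)(t-m)}z_\alpha(m)\|_{L^p}\le\|z_\alpha(m)\|_{L^p}$ handles the first term by induction in $m$. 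If this induction accumulates too much, we instead use the damping $e^{-\alpha}$ per unit time to sum a geometric series.

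This produces the required random variable $K_{\eps,\delta,p}^{(\kappa)}$, measurable with respect to the filtration generated by $W$, with all moments bounded, and satisfying the stated bound for all $\alpha\ge1$ and $t\ge0$.
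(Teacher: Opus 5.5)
Your argument is correct, but it is organized differently from the paper's. The paper, following \cite{DPD}, factorizes $z_\alpha$ through the $\alpha$-\emph{independent} process $Y(\sigma)=\int_0^\sigma(\sigma-s)^{-\beta}e^{(\sigma-s)A}(-A)^{\gamma}\,\mathd W_s$, with $\beta$ just below $\frac14-\gamma$ and $m$ large. All the dependence on $\alpha$ then sits in deterministic kernels, which produce the factor $\alpha^{-\eta}+\alpha^{-\beta+\frac1{2m}}$ with $\eta=\frac14-\eps-\gamma$. The growth in $t$ is absorbed by Cauchy--Schwarz against the weight $(1+\sigma^2)^{-1}$. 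The result is one explicit random variable $K=c\bigl(\int_0^\infty(1+\sigma^2)^{-1}\|Y(\sigma)\|_{L^p}^{4m}\,\mathd\sigma\bigr)^{1/(4m)}$ that serves all $\alpha\ge1$, $t\ge0$ at once, with no chaining in $\alpha$ and no union bound over time blocks.

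You instead keep $\alpha$ inside $Y_\alpha$ and take $\beta$ tiny ($1/p<\beta<\eps$), so the $\alpha$-decay comes from the variance $\sum_k\alpha_k^{2\gamma'}(\alpha_k+\alpha)^{2\beta-1}$. This makes the fixed-$\alpha$ bound an elementary computation. The price is that uniformity must be bought separately, via dyadic blocks in $\alpha$ and unit blocks in $t$ with moments of order $q>1/\delta$.

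Two places need tightening.

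\textbf{Supremum in $t$ for the $\alpha$-increments.} Your bound on $\partial_\alpha z_\alpha$ ``as in Step 2'' is pointwise in $t$, while you need $\sup_{t\le1}$. The cleanest fix is the pathwise identity $z_\alpha(t)-z_{\alpha'}(t)=(\alpha'-\alpha)\int_0^t e^{(A-\alpha)(t-r)}z_{\alpha'}(r)\,\mathd r$. By $L^p$-contractivity it gives $\sup_{\alpha\in[2^j,2^{j+1}]}\sup_{t\le1}\|z_\alpha(t)\|_{L^p}\le2\sup_{t\le1}\|z_{2^j}(t)\|_{L^p}$, so only the dyadic points need moment bounds and no chaining is required.

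\textbf{The time step.} The plain induction does accumulate, giving growth like $m^{1+\delta}$, so the damping is necessary rather than a fallback. Write $z_\alpha(m)=\sum_{j<m}e^{(A-\alpha)(m-1-j)}\tilde z^{(j)}_\alpha(1)$ and use $\alpha\ge1$. This gives $\|z_\alpha(m)\|_{L^p}\lesssim\alpha^{-\frac14+\gamma'+\eps}(1+m)^{\delta}K$.

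With these fixes your route is a valid, more modular but longer alternative. The paper's is shorter and gives $K$ explicitly, at the cost of relying on the kernel estimate (2.5) of \cite{DPD}.
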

 When $\kappa =0$ we set $ K_{\eps, \delta, p}^{(\kappa)} =  K_{\eps, \delta, p}.$ We prove the result in Section \ref{prop}.  Note that $ K_{\eps, \delta, p}^{(\kappa)}$ depends also on $\gamma$ (we do not indicate explicitly such dependence). 
  
 Combining the two previous results we get  the next result where we use the random variables $K_{\eps,  p}^{(\kappa)} =  K_{\eps, 1/2, p}^{(\kappa)}$ and set $c_T = (1+T^{1/2})$. 
\begin{theorem}\label{theorem:zalphaestimates} 
	Let $\alpha \ge 1$. Consider $\gamma \in \left[0,\frac{1}{4}\right)$,  $1 < p <+\infty$,
	$\varepsilon>0$ and $\kappa \geqslant 0$ such that $\varepsilon + \kappa < \frac{1}{4}-\gamma$. Let $T>0$. Then  the random field $\bar z_{\alpha}(t,\xi)$ given in Proposition \ref{continuo} 	takes values in $C^0([0,T];
	W^{2\kappa,p}(\Lambda) \cap C(\bar \Lambda)) $ 
	and it is continuous with respect to the parameter $\alpha \geqslant 1$.\\ 
	Finally, there are some random variables
	$K_{\varepsilon, p}^{(\kappa)}:\Omega \rightarrow \R_+,$ which have all bounded moments, such that,  $\P$-a.s., 
	\begin{equation}\label{eq:zalphaestimates}
		\|(-A)^{\kappa} \bar z_{\alpha}(t)\|_{L^p(\Lambda)} \leqslant \alpha^{-\frac{1}{4}+\varepsilon+\gamma+\kappa} c_T K_{\varepsilon, p}^{(\kappa)},  
	\end{equation}
	$\text{for any} \; \alpha \ge 1, \, t \in [0,T]$. 
\end{theorem}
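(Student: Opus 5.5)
The plan is to combine Proposition \ref{continuo} with Proposition \ref{prop:K} (taking $\delta=1/2$), arguing on a full-measure set where both apply. The key point is that Proposition \ref{prop:K} gives the bound for each fixed $\alpha$ and $t$ off a null set that may depend on $(\alpha,t)$. Continuity of the version $\bar z$ then upgrades this to a bound holding simultaneously for all $\alpha\ge 1$ and $t\in[0,T]$.

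\textbf{Regularity and continuity.} Fix $\kappa$ and $\varepsilon$ with $\varepsilon+\kappa<\frac14-\gamma$. By Proposition \ref{continuo}, the field $(-A)^{\kappa}\bar z_\alpha(t,\xi)$ is jointly continuous in $(\alpha,t,\xi)$ on $[1,\infty)\times[0,T]\times\bar\Lambda$. This requires checking that the continuity extends up to the boundary, which should follow from the Kolmogorov argument in Appendix \ref{app1}.

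Uniform continuity on compact sets $[1,N]\times[0,T]\times\bar\Lambda$ gives that $\alpha\mapsto(-A)^{\kappa}\bar z_\alpha$ is continuous into $C^0([0,T];C(\bar\Lambda))$. Since $C(\bar\Lambda)\hookrightarrow L^p(\Lambda)$, it is also continuous into $C^0([0,T];L^p)$, which means $\bar z_\alpha\in C^0([0,T];W^{2\kappa,p})$. Applying the same argument with $\kappa=0$ gives the $C(\bar\Lambda)$ part, and hence the continuity in $\alpha$ claimed in the statement.

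\textbf{The estimate.} Let $D=(\mathbb{Q}\cap[1,\infty))\times(\mathbb{Q}\cap[0,T])$, which is countable. Since $\bar z$ is a version of $z$, for each $(\alpha,t)\in D$ we have $(-A)^\kappa\bar z_\alpha(t)=(-A)^\kappa z_\alpha(t)$ a.s. Proposition \ref{prop:K} then gives, off a single null set $N_0$ (a countable union of null sets),
\[
\|(-A)^{\kappa}\bar z_{\alpha}(t)\|_{L^p}\le \alpha^{-\frac14+\varepsilon+\gamma+\kappa}(1+t^{1/2})K^{(\kappa)}_{\varepsilon,1/2,p}\quad\text{for all }(\alpha,t)\in D.
\]
Since $1+t^{1/2}\le c_T$, the right-hand side is at most $\alpha^{-\frac14+\varepsilon+\gamma+\kappa}c_T K^{(\kappa)}_{\varepsilon,1/2,p}$.

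For $\omega\notin N_0$, both sides are continuous in $(\alpha,t)$: the left by the previous step, since the $L^p$ norm is continuous on $C(\bar\Lambda)$, and the right trivially. By density of $D$, the inequality therefore extends to all $\alpha\ge1$ and $t\in[0,T]$. Setting $K^{(\kappa)}_{\varepsilon,p}:=K^{(\kappa)}_{\varepsilon,1/2,p}$, which has all moments by Proposition \ref{prop:K}, yields \eqref{eq:zalphaestimates}.

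\textbf{Main obstacle.} The delicate step is the joint continuity up to the boundary and the resulting continuity of $\alpha\mapsto\bar z_\alpha$ into $C^0([0,T];C(\bar\Lambda))$, since this is what makes the passage from the countable set $D$ to all $(\alpha,t)$ legitimate. Everything else is measure-theoretic bookkeeping.
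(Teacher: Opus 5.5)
Your proposal is correct and follows the paper's route. The paper obtains the theorem by combining Proposition \ref{continuo} with Proposition \ref{prop:K} at $\delta=1/2$ (so that $c_T=1+T^{1/2}$ and $K^{(\kappa)}_{\varepsilon,p}=K^{(\kappa)}_{\varepsilon,1/2,p}$), and Remark \ref{see} is exactly your continuity argument making the exceptional null set independent of $(\alpha,t)$; the boundary continuity you flag is already covered in Appendix \ref{app1}, where the Kolmogorov test is run with $\xi,\zeta\in[0,1]=\bar\Lambda$.
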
 
\begin{remark} \label{see}
Since  the random variable $\| (-A)^{\kappa} \bar  z_{\alpha}(t) \|_{L^p(\Lambda)}$ is continuous in $(\alpha,t)$ the previous estimate holds when $\omega$ is varying on an almost-sure event which does not depend on $\alpha$ and $t$. \qed
\end{remark}

Combining  the previous result with \eqref{interpo} we get (possibly multiplying the random variable $K_{\varepsilon, p}^{(\kappa)}$ by a constant $C_p$)
\begin{equation}\label{finite}
 \|(-A)^{\kappa} \pi_n \bar z_{\alpha}(t)\|_{L^p(\Lambda)} \leqslant \alpha^{-\frac{1}{4}+\varepsilon+\gamma+\kappa} c_T K_{\varepsilon, p}^{(\kappa)}, \;\; n \ge 1.
\end{equation}
Hereafter we also write 
\begin{equation} \label{yya}
Y^{x,\alpha}_t=X^{x}_t-{\bar z_{\alpha}}(t),
\end{equation}
which solves the (random) PDE
\begin{equation}\label{eq:Ymain1}
	\frac{d}{dt} Y^{x,\alpha}_t=AY^{x,\alpha}_t+\frac{1}{2} \partial_{\xi}(Y^{x,\alpha}_t+{\bar z_{\alpha}}(t))^2+\alpha {\bar z_{\alpha}}(t),\quad Y^{x,\alpha}_0=x
\end{equation}
 (it is easy to check \eqref{eq:Ymain1} by looking to the weak formulation of the SPDEs verified by $X^{x}$ and ${\bar z_{\alpha}}$). 
 Recall   the approximating   equation  \eqref{eq:Yapproximation1}, which is   a finite dimensional stochastic differential equation (SDE) defined on the space $H_n$ (this  is the $n$ dimensional subspace of $H$ generated by the first $n$ eigenfunctions of $A$).
  We write ${\bar z^n_{\alpha}}:=\pi_n({\bar z_{\alpha}})$ and 
$$Y^{n,x_n,\alpha}:=X^{n,x_n}-{\bar  z^n_{\alpha}}. 
$$
 Similarly to \eqref{eq:Yapproximation1},
the process $Y^{n,x_n,\alpha}$ solves the following random ODE 
\begin{equation}\label{eq:Yapproximation13}
	\frac{d}{dt}Y^{n,x_n,\alpha}=AY^{n,x_n,\alpha}_t+\frac{1}{2}\pi_n\left(\partial_{\xi}(Y^{n,x_n,\alpha}_t+ \bar z^n_{\alpha}(t))^2\right)+\alpha {\bar z^n_{\alpha}(t)},\quad Y^{n,x_n,\alpha}_0=\pi_n(x). 
\end{equation}

\subsection{Application of Bou\'e-Dupuis formula to Burgers equation}\label{section:applicationBurgers} 

We are going to consider the following finite dimensional SDE on $H_n  \simeq \R^n$
 (cf. \eqref{eq:BDSDE}).
\begin{gather}\label{eq:Burgersapporximation} 
	dX^{n,x_n,u}_t = \Big(A X_t^{n,x_n,u}+\frac{1}{2}\pi_n \left(\partial_{\xi}(X^{n,x_n,u}_t)^2 \right) +\pi_n (-A)^{\gamma}u(t) \Big)dt + \pi_n (-A)^{\gamma} dW_t,  \\
	 \nonumber X^{n,x_n,u} = x_n = \pi_n x, 
\end{gather} 
where $u:[0,T]\times  \Omega \rightarrow H_n$   belongs to $\mathcal{P}^{[0,T],H_n}$.  Arguing as for eq. \eqref{easy} it is easy to prove that for any $u \in \mathcal{P}^{[0,T],H_n}$ there exists a unique global  solution to \eqref{eq:Burgersapporximation}. 

Since $(-A)^{\gamma}$ and $\pi_n$ commutes, $u_n=\pi_n (-A)^{\gamma} u$ is again a finite dimensional $L^2$-integrable process taking values in $H_n$. 
Using the previous notation we have  
\begin{theorem} 
	\label{theorem:apriorigamma} For any $\gamma < \frac{1}{4}$, for any $T>0$, there are $\delta_1 > 0$,  $0<\varepsilon_1<\frac{1}{4}-\gamma$, $c_{T, \varepsilon_1} > 0$ and $\theta := 2(\frac{1}{4}-\varepsilon_1-\gamma)^{-1} > 0$ such that,
	$\P$-a.s., 
	for any $t\in[0,T],$ we have   
	\begin{equation}\label{eq:aprioriXn}
		 \| X^{n,x_n,u}_t \|_{L^2_{\xi}}^2 \leqslant e^{-\delta_1 t} \| x_n\|^2_{L^2_{\xi}}+c_{T, \varepsilon_1} \left( 1 + \left(K^{}_{\varepsilon_1,  4}\right)^{\theta} + \| u \|^2_{L^2_t L^2_{\xi}} \right).	
	\end{equation} 
\end{theorem}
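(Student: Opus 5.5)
We follow the Da Prato--Debussche idea described in the introduction, now applied to the controlled Galerkin system \eqref{eq:Burgersapporximation}. We subtract the projected Ornstein--Uhlenbeck process with a random damping parameter, $Y_t := X^{n,x_n,u}_t - \bar z^n_{\alpha}(t)$, with $\alpha = \alpha(\omega)\ge 1$ to be chosen later. We also choose $\varepsilon_1 \in (0,\frac14-\gamma)$ small, as in Theorem \ref{theorem:zalphaestimates} with $\kappa=0$ and $p=4$. Exactly as in \eqref{eq:Yapproximation13}, $Y$ solves the random ODE on $H_n$
\[
\frac{d}{dt}Y_t = AY_t + \tfrac12 \pi_n\partial_\xi (Y_t+\bar z^n_\alpha(t))^2 + \alpha \bar z^n_\alpha(t) + \pi_n(-A)^{\gamma}u(t), \qquad Y_0 = x_n .
\]
Since this is a finite-dimensional ODE, we can take $\frac{d}{dt}\|Y_t\|^2$ pathwise without any It\^o correction. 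For the nonlinear term we use $\langle \partial_\xi (Y^2), Y\rangle = 0$ (Dirichlet conditions). What remains is $-\langle 2Y\bar z^n_\alpha + (\bar z^n_\alpha)^2, \partial_\xi Y\rangle$. This is bounded by $\|\partial_\xi Y\|\,(2\|Y\|_{L^4}\|\bar z^n_\alpha\|_{L^4} + \|\bar z^n_\alpha\|_{L^4}^2)$. We then apply Gagliardo--Nirenberg, $\|Y\|_{L^4}\lesssim \|Y\|^{3/4}\|Y\|_{H^1}^{1/4}$, and Young's inequality.

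The key step is the choice of $\alpha$. By \eqref{finite} with $\kappa=0$, $p=4$, we have $\sup_{t\le T}\|\bar z^n_\alpha(t)\|_{L^4}\le \alpha^{-(\frac14-\varepsilon_1-\gamma)} c_T K_{\varepsilon_1,4}$. Fix a small deterministic $\eta>0$ and choose $\alpha(\omega) = \max\{1, (c_T K_{\varepsilon_1,4}/\eta)^{(\frac14-\varepsilon_1-\gamma)^{-1}}\}$, which guarantees $\sup_t\|\bar z^n_\alpha(t)\|_{L^4}\le \eta$ uniformly in $n$. The continuity in $\alpha$ from Proposition \ref{continuo} and Remark \ref{see} ensures that plugging in a random $\alpha$ is legitimate, on one almost-sure event independent of $\alpha,t$. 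The cross term $\|\partial_\xi Y\|\,\|Y\|_{L^4}\eta$ is then at most $C\eta\|Y\|_{H^1}^{5/4}\|Y\|^{3/4}$. By Young's inequality and Poincar\'e ($\|Y\|\le \pi^{-1}\|Y\|_{H^1}$), for $\eta$ small this can be absorbed into half of the dissipation $-2\|Y\|_{H^1}^2$. The remaining terms are handled as follows:
\begin{itemize}
\item The term $\|\partial_\xi Y\|\eta^2$ gives $\tfrac14\|Y\|_{H^1}^2 + C\eta^4$.
\item The control term satisfies $|\langle (-A)^\gamma u, Y\rangle| \le \|u\|\,\|Y\|_{H^{2\gamma}} \le \tfrac14\|Y\|_{H^1}^2 + C\|u\|^2$, since $2\gamma<1$ and $H^1\hookrightarrow H^{2\gamma}$.
\item The forcing term satisfies $\alpha|\langle \bar z^n_\alpha, Y\rangle| \le \alpha\eta\|Y\| \le \tfrac14\|Y\|_{H^1}^2 + C\alpha^2\eta^2$.
\end{itemize}
Here $\alpha^2 \lesssim 1 + K_{\varepsilon_1,4}^{2(\frac14-\varepsilon_1-\gamma)^{-1}} = 1+K_{\varepsilon_1,4}^{\theta}$, which is exactly the $\theta$ in the statement. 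Using Poincar\'e on the leftover dissipation, we arrive at
\[
\frac{d}{dt}\|Y_t\|^2 \le -\delta_1\|Y_t\|^2 + C\big(1 + K_{\varepsilon_1,4}^{\theta} + \|u(t)\|^2\big).
\]

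Gronwall's inequality then yields $\|Y_t\|^2 \le e^{-\delta_1 t}\|x_n\|^2 + C\int_0^t e^{-\delta_1(t-s)}(1+K^\theta + \|u(s)\|^2)\,ds$. The integral is bounded by $C(T)(1+K^\theta) + C\|u\|^2_{L^2_tL^2_\xi}$. Finally, $\|X_t\|^2 \le (1+\epsilon')\|Y_t\|^2 + C_{\epsilon'}\eta^2$. Since we need the coefficient of $e^{-\delta_1 t}\|x_n\|^2$ to be exactly $1$, we keep a slightly larger decay rate in the Gronwall step. We then absorb the factor using $(1+\epsilon')e^{-\delta t}\le e^{-\delta_1 t}$ for $t\ge t_0$, and treat small $t$ separately. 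Alternatively, we simply bound $\|X_t\|^2\le \|Y_t\|^2 + 2\|Y_t\|\eta + \eta^2$ and absorb the cross term similarly.

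The main obstacle we expect is the absorption step. It requires the Gagliardo--Nirenberg exponents to leave a total power of $\|Y\|_{H^1}$ at most $2$, with the prefactor uniformly small. This is why the random choice of $\alpha$ making $\|\bar z^n_\alpha\|_{L^4}\le\eta$ must be uniform in $n$, $t$ and $\omega$ (modulo a null set), and why it is precisely this choice that produces the power $\theta$ of $K_{\varepsilon_1,4}$.
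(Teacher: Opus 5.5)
Your energy estimate for $Y=X^{n,x_n,u}-\bar z^n_\alpha$ is correct and is essentially the paper's argument: the random choice of $\alpha(\omega)$ forcing $\sup_t\|\bar z^n_\alpha(t)\|_{L^4_\xi}\le\eta$, followed by Gronwall, gives
$\|Y_t\|^2_{L^2_\xi}\le e^{-\delta t}\|x_n\|^2_{L^2_\xi}+C\big(1+K_{\varepsilon_1,4}^\theta+\|u\|^2_{L^2_tL^2_\xi}\big)$.
The paper keeps the term $c_\varepsilon\|\bar z^n_\alpha\|_{L^4_\xi}^{8/3}\|Y\|^2_{L^2_\xi}$ and makes its coefficient small through $\varepsilon'$, while you absorb $\eta\|Y\|_{H^1}^{5/4}\|Y\|^{3/4}$ directly by Poincar\'e; the two are equivalent.

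The gap is the passage from $Y$ to $X$ with coefficient exactly $1$ in front of $e^{-\delta_1 t}\|x_n\|^2_{L^2_\xi}$, and neither of your repairs delivers it.
\begin{itemize}
\item After $\|X_t\|^2\le(1+\epsilon')\|Y_t\|^2+C_{\epsilon'}\eta^2$, the excess $\big((1+\epsilon')e^{-\delta t}-e^{-\delta_1 t}\big)\|x_n\|^2$ is positive for $t<t_0$ and proportional to $\|x_n\|^2$. It therefore cannot go into the $x$-independent constant, and ``treat small $t$ separately'' has no mechanism behind it.
\item Your second repair fails for the same reason, since $2\eta\|Y_t\|\approx2\eta\|x_n\|$ near $t=0$.
\end{itemize}

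You were right to see a problem here. The paper's own last line, $\|X_t\|^2\le\|Y_t\|^2+\|\bar z^n_\alpha(t)\|^2$, silently drops the cross term $2\langle Y_t,\bar z^n_\alpha(t)\rangle$. In fact the bound with coefficient $1$ appears to be false, so no argument can close this step. Take $n=1$, $u=0$, $x=se_1$ with $s\in\N$. Since $\langle\partial_\xi(e_1^2),e_1\rangle=0$, the nonlinearity vanishes and $X_t=(e^{-\pi^2t}s+\pi^{2\gamma}\zeta_t)e_1$ with $\zeta_t=\int_0^te^{-\pi^2(t-r)}d\beta_1(r)$. With $a=(2\pi^2-\delta_1)^+$ this gives
\[
\|X_t\|^2_{L^2_\xi}-e^{-\delta_1 t}s^2\ \ge\ -a t s^2+2\pi^{2\gamma}e^{-\pi^2 t}s\,\zeta_t .
\]
If $a>0$, optimizing over $s\in\N$ gives roughly $\pi^{4\gamma}e^{-2\pi^2t}\zeta_t^2/(at)$, which is unbounded as $t\downarrow0$ by the law of the iterated logarithm. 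If $a=0$, the right-hand side is already unbounded in $s$ at any $t$ with $\zeta_t>0$. Hence $\sup_{s\in\N}\sup_{t\le T}\big(\|X^{(s)}_t\|^2-e^{-\delta_1t}s^2\big)=\infty$ a.s. But if the statement held for every $s\in\N$, a countable intersection would bound this supremum by the a.s. finite quantity $c_{T,\varepsilon_1}(1+K_{\varepsilon_1,4}^\theta)$.

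What your argument, and the paper's, actually proves is the estimate with $(1+\epsilon)e^{-\delta_1t}\|x_n\|^2_{L^2_\xi}$ (or $2e^{-\delta_1t}\|x_n\|^2_{L^2_\xi}$) and a constant depending on $\epsilon$. That is the version you should state and prove. It is still enough for Theorem~\ref{theorem:exponentialbound} and its applications, after replacing $\lambda\|x\|^2_{L^2_\xi}$ by $(1+\epsilon)\lambda\|x\|^2_{L^2_\xi}$ there and adjusting constants.
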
 
  \begin{proof} 
 	We divide the proof in two steps.
 	
{\it Step 1: Let $\alpha \ge 1$. We consider the random field
 		\begin{equation}\label{yyn}
 			Y_t^{n,x_n,u,\alpha} = X^{n,x_n,u}_t - {\bar z_{\alpha}}^n(t) ,
 		\end{equation}
 		which is the analogous of the process \eqref{eq:Yapproximation13}, and we prove some a-priori estimates for $Y_t^{n,x_n,u,\alpha}$.
 		}
 		
 \noindent 	The process $Y^{n,x_n,u,\alpha}_t$ takes values on $H_n$ and solves the following (finite dimensional) random ODE
 	\[
 	\frac{\mathd}{\mathd t}Y_t^{n,x_n,u,\alpha}=A Y_t^{n,x_n,u,\alpha}+\frac{1}{2}\pi_n\left(\partial_{\xi}(Y_t^{n,x_n,u,\alpha}+{\bar z^n_{\alpha}(t)})^2\right)+\alpha z^n_{\alpha}(t) +\pi_n (-A)^{\gamma}u(t),
 	\]
 	$\P$-a.s., for any a.e. $t \in [0,T]$ (with respect to the Lebesgue measure). 
 	By multiplying both sides for $Y_t^{n,x_n,u,\alpha}$ and then integrating with respect to $\xi$ on $\Lambda$, we obtain the following inequality ($\nabla Y_t^{n,x_n,u,\alpha} = \partial_{\xi}  Y_t^{n,x_n,u,\alpha})$
 	\begin{gather}  
 		\frac{1}{2}\frac{\mathd}{\mathd t} \| Y_t^{n,x_n,u,\alpha} \|_{L^2_{\xi}}^2 = - \| \nabla Y_t^{n,x_n,u,\alpha} \|_{L^2_{\xi}}^2 + \alpha \int_{\small \Lambda}  
 		Y_t^{n,x_n,u,\alpha} {\bar z_{\alpha}}^n(t) \mathd \xi+\nonumber\\ 
 		+\int_{\small \Lambda} (-A)^{\gamma}(Y_t^{n,x_n,u,\alpha}) \pi_n u(t) \mathd \xi
 		- \frac{1}{2}\int_{\small \Lambda} (Y_t^{n,x_n,u,\alpha}+ {\bar z_{\alpha}}^n(t))^2 \partial_{\xi}
 		Y_t^{n,x_n,u,\alpha} \mathd \xi .\label{eq:derivativeY}	 
 	\end{gather}   
 	We remark that  for any $\alpha \ge 1$ there exists $\Omega_{\alpha}\in {\mathcal F}$ with  
 	$\P(\Omega_{\alpha})=1$ such that for any $\omega \in \Omega_{\alpha}$, \eqref{eq:derivativeY} holds as an identity in $L^2([0,T])$ (i.e., the right-hand side and the left-hand side in \eqref{eq:derivativeY} as functions of time are both in $L^2([0,T])$).

 	Since we are in a finite dimensional setting it is easy to prove by the Kolmogorov test that  the process $Y_t^{n,x_n,u,\alpha}$ admits a version (still denoted by $Y_t^{n,x_n,u,\alpha}$) which is continuous in $(\alpha,t, \xi) \in [1, \infty) \times [0,T] \times \bar \Lambda$.  	Similarly,
 	\begin{gather} \label{discuss}
 		\nabla Y_t^{n,x_n,u,\alpha} = \partial_{\xi}  Y_t^{n,x_n,u,\alpha}
 	\end{gather}
 	admits a version  which is continuous in $(\alpha,t, \xi) \in [1, \infty) \times [0,T] \times \bar \Lambda$ (still denoted by $\partial_{\xi} Y_t^{n,x_n,u,\alpha}$). We can fix $n, x_n, u$ and $\alpha$ and consider  
 	$\frac{\mathd}{\mathd t} \| Y_t^{n,x_n,u,\alpha} \|_{L^2_{\xi}}^2 $ as an $L^2([0,T])$-random variable.

 	By identity \eqref{eq:derivativeY} it follows that the  $L^2([0,T])$-random variable
 	$\frac{\mathd}{\mathd t} \| Y_t^{n,x_n,u,\alpha} \|_{L^2_{\xi}}^2 $ has a version 
 	(still denoted by $\frac{\mathd}{\mathd t} \| Y_t^{n,x_n,u,\alpha} \|_{L^2_{\xi}}^2 )$ which depends  continuously on $\alpha $. In the sequel we will use all the previous  versions.   
 	 
 	
 	Noting that $ \int_{\small \Lambda} (Y_t^{n,x_n,u,\alpha})^2 \partial_{\xi}
 	Y_t^{n,x_n,u,\alpha} \mathd \xi  = 0$
 	we are going to bound the various terms on the right-hand side of equation \eqref{eq:derivativeY}. 
 	First we note that, for any $\varepsilon>0$
 	\[ \left| \alpha \int_{\small \Lambda} Y_t^{n,x_n,u,\alpha} {\bar z_{\alpha}}^n(t) \mathd	\xi \right| \leqslant \frac{\alpha^2}{2 \varepsilon} \| {\bar z_{\alpha}}^n(t) \|^2_{L^2_{\xi}} + \frac{\varepsilon}{2} \| Y_t^{n,x_n,u,\alpha} \|_{L^2_{\xi}}^2. \]
 	For the term involving $Y_t^{n,x_n,u,\alpha}$ and $u$, we obtain
 	\begin{align*}
 		\left| \int_{\small \Lambda} (-A)^{\gamma}(Y_t^{n,x_n,u,\alpha}) \pi_n u(t) \mathd \xi \right| &\leq \frac{1}{2 \varepsilon} \| \pi_n u(t)\|^2_{L^2_{\xi}} + \frac{\varepsilon}{2} \left\| (-A)^{\gamma} (Y_t^{n,x_n,u,\alpha}) \right\|_{L^2_{\xi}}^2 \\
 		&\leq \frac{1}{2 \varepsilon} \| u(t) \|^2_{L^2_{\xi}} + \frac{C \varepsilon}{2} \| \nabla Y_t^{n,x_n,u,\alpha} \|_{L^2_{\xi}}^2,
 	\end{align*}
 	since $\left\| (-A)^{\gamma} Y_t^{n,x_n,u,\alpha} \right\|_{L^2_{\xi}} \leq C \| \nabla Y_t^{n,x_n,u,\alpha} \|_{L^2_{\xi}}$ for some $C > 0$, because $\gamma < 1/4$.
 	
 	For the last term in \eqref{eq:derivativeY}, we get
 	\begin{multline}
 		\left| \int_{\small \Lambda}  (Y_t^{n,x_n,u,\alpha}+ {\bar z_{\alpha}}^n(t))^2 \partial_{\xi} Y_t^{n,x_n,u,\alpha} \mathd \xi \right| \\
 		= \Big| \underbrace{2 \int_{\small \Lambda} {\bar z_{\alpha}}^n(t) Y_t^{n,x_n,u,\alpha}\partial_{\xi} Y_t^{n,x_n,u,\alpha} \mathd \xi}_{=: I_1} + \underbrace{\int_{\small \Lambda} {\bar z_{\alpha}}^n(t)^2 \partial_{\xi} Y_t^{n,x_n,u,\alpha} \mathd \xi}_{=: I_2} \Big|.
 	\end{multline}
 	To handle $I_1$ we use the  H\"older inequality to get
 	\begin{gather*}
 		|I_1| \le \|  Y_t^{n,x_n,u,\alpha} \|_{L^4_{\xi}} \| \partial_{\xi} Y_t^{n,x_n,u,\alpha} \| \, \| \bar z_{\alpha}^n(t) \|_{L^4_{\xi}}.
 	\end{gather*}
 	By the Sobolev embedding theorem and the interpolatory inequality
 	$$
 	\| u \|_{\beta} \leq \| u \|_{\alpha}^{\frac{\gamma - \beta}{\gamma - \alpha}} \| u \|_{\gamma}^{\frac{\beta - \alpha}{\gamma - \alpha}}, \quad \alpha < \beta < \gamma,
 	$$
 	we continue with
 	$
 	\|   Y_t^{n,x_n,u,\alpha}\|_{L^4(\Lambda)} \leq C \|  Y_t^{n,x_n,u,\alpha} \|_{1/4} \leq C \|  Y_t^{n,x_n,u,\alpha}\|^{3/4} \|  Y_t^{n,x_n,u,\alpha}\|_{H^1_{\xi}}^{1/4},
 	$
 	for some deterministic constant $C > 0$ that may change from line to line. Moreover, by the Poincar\'e inequality we have
 	$$
 	\|   Y_t^{n,x_n,u,\alpha} \|_{H^1_{\xi}} \leq C \| \partial_{\xi}  Y_t^{n,x_n,u,\alpha} \|.
 	$$
 	Using Young's inequality 
 	$$
 	ab \leq \frac{3}{8} a^{8/3} + \frac{5}{8} b^{8/5}, \quad a, b \geq 0,
 	$$
 	we find that
 	\begin{gather*}
 		|I_1| \leq \varepsilon \| \nabla Y_t^{n,x_n,u,\alpha} \|_{L_{\xi}^2}^2 + c_{\varepsilon} \| {\bar z_{\alpha}}^n(t) \|_{L_{\xi}^4}^{8/3} \| Y_t^{n,x_n,u,\alpha} \|_{L_{\xi}^2}^2,
 	\end{gather*}
 	for any $\varepsilon > 0$ and some suitable $c_{\varepsilon} > 0$. Arguing similarly we find, $\P$-a.s., 
 	\begin{gather*}
 		|I_2| \leq \frac{1}{2 \varepsilon} \|{\bar z_{\alpha}}^n(t) \|_{L^4_{\xi}}^4 + \frac{\varepsilon}{2} \| \nabla Y_t^{n,x_n,u,\alpha} \|_{L^2_{\xi}}^2, \;\;\; t \in [0,T].
 	\end{gather*}
 	Putting all together for any $\varepsilon > 0$ small enough, there is $\delta > 0$ such that we have, $\P$-a.s., for any $t \in [0,T]$ a.e., 
 	\begin{multline} \label{vs}
 		\frac{1}{2} \frac{\mathd}{\mathd t} \|Y_t^{n,x_n,u,\alpha} \|_{L^2_{\xi}}^2 +
 		\delta \| \nabla Y_t^{n,x_n,u,\alpha} \|_{L^2_{\xi}}^2\\
 		\leqslant c_{\varepsilon} \|\bar  z_{\alpha}^n(t) \|_{L^4_{\xi}}^{8/3} \| Y_t^{n,x_n,u,\alpha} \|_{L^2_{\xi}}^2 + \frac{\varepsilon}{2} \| Y_t^{n,x_n,u,\alpha} \|_{L^2_{\xi}}^2 + c'_{\varepsilon} ( \|\bar  z_{\alpha}^n(t)
 		\|_{L^4_{\xi}}^4 + \alpha^2 \| \bar z_{\alpha}^n(t) \|^2_{L^2_{\xi}} + \|
 		u(t) \|^2_{L^2_{\xi}}),
 	\end{multline}
 	for suitable $c_{\varepsilon}, c'_{\varepsilon} > 0$.
 	  
 	\vskip 1mm 
 	{\it Step 2: Now we perform  the idea used in the proof in Proposition 2.2 in \cite{DPD}; this  requires to consider a random dependence of the parameter $\alpha $ by $\omega$, i.e., to consider $\alpha (\omega)$.}
 	To completely justify such method we argue as follows, taking also into account the discussion around \eqref{discuss}. 	
 	
 	First 	 
 	note that the  right-hand side and the left-hand side in \eqref{vs} are both functions in $L^2([0,T])$ as far as the dependence on time is concerned.  Hence
  the  inequality \eqref{vs} is between functions in  $L^2([0,T])$ for any $\omega \in \Omega'$, $\alpha \in [1, \infty) \cap \Q$, where 
 	$$
 	\Omega' = \bigcap_{\alpha \geq 1, \alpha \in \Q} \Omega_{\alpha};
 	$$
 	$\Omega_{\alpha}\in {\mathcal F}$ with  
 	$\P(\Omega_{\alpha})=1$. 	Clearly  $\P(\Omega') = 1$. 
 	Recall that  $\bar{z}_{\alpha}$ and so $\pi_n\bar{z}_{\alpha}$ depends in a continuous way on $(\alpha, t, \xi) $ (cf. Proposition \ref{continuo}). 
 	Taking also into account the discussion around \eqref{discuss}	
 	{\it  it follows that 
 		\eqref{vs} holds for any $\omega \in \Omega'$, $\alpha \in [1, \infty)$ and for a.e. $t \in [0,T]$} (note that $t \in A_{\omega, \alpha} \subset [0,T]$ with Leb$(A_{\omega, \alpha})=T$).  
 	
 	Therefore, let $\epsilon_1$ such that $0< \epsilon_1< 1/4 - \gamma$, we can choose $\alpha \ge 1$ depending on $\omega \in \Omega'$. We choose $\alpha = \alpha (\omega)$:    
 	\begin{equation}\label{eq:alphaproof}
 		\alpha (\omega) =\frac{1}{\varepsilon'} \Big( 
 		\left({K}_{\varepsilon_1,  4}(\omega)\right)^{\left(\frac{1}{4}-\varepsilon_1-\gamma\right)^{-1}} + 1 \Big), 
 	\end{equation}
 	for some $0 < \varepsilon' \leq 1$ (that is to be chosen later), then, for any $t\in[0,T]$, 
 	\[ \| {\bar z_{\alpha}}^n(t, \omega) \|_{L^2_{\xi}} \leq \| {\bar z_{\alpha}}^n(t, \omega) \|_{L^4_{\xi}} \leqslant \tilde{c}_T
 	(\varepsilon')^{\theta_{\gamma}},\]
 	where $\tilde{c}_T > 0$ is a suitable constant, and $\theta_{\gamma}<\frac{1}{4}-\varepsilon_1-\gamma$ is some constant depending on $\gamma<\frac{1}{4}$ (and converging to $0$ when $\gamma \rightarrow \frac{1}{4}$); here we also  use \eqref{interpo}.  

 	We obtain for $\omega \in \Omega'$, for  a.e. $t \in [0,T]$,
 	\begin{multline}
 		\frac{1}{2}\frac{d}{dt}\|{Y}_t^{n,x_n,u,\alpha(\omega)}(\omega)\|_{L^2_{\xi}}^2 +\delta \|\nabla{Y}_t^{n,x_n,u,\alpha({\omega})}(\omega) \|^2_{L^2_{\xi}}\\
 		\leqslant \left(c_{\varepsilon} {\bar c}_T (\varepsilon')^{8 \theta_{\gamma}/3} + \frac{\varepsilon}{2} \right) \|{Y}_t^{n,x_n,u,\alpha({\omega})}(\omega) \|_{L^2_{\xi}}^2 + c_{T,\varepsilon,\varepsilon'} \left( \left({K}_{\varepsilon_1,  4}(\omega)\right)^{\theta} + \| u(t, \omega) \|_{L^2_{\xi}}^2 + 1 \right), \label{eq:inequalityYderivative2} 
 	\end{multline}
 	for some suitable constants $c_{T,\varepsilon,\varepsilon'}, {\bar c_T} > 0$ and $\theta := 2(\frac{1}{4}-\varepsilon_1-\gamma)^{-1}$.

 	By the Poincare inequality, there exists  $C_P>0$ such that $C_P \|f \|_{L^2_{\xi}} \leqslant \|\nabla f \|_{L^2_{\xi}}$. We obtain an estimate like \eqref{eq:inequalityYderivative2} with the term $\delta \|\nabla{Y}_t^{n,x_n,u,\alpha({\omega})}(\omega) \|^2_{L^2_{\xi}}$ replaced by $\delta C_P \|{Y}_t^{n,x_n,u,\alpha({\omega})}(\omega) \|^2_{L^2_{\xi}}$.
 	Thus if we choose $\varepsilon'$ such that $0 < \varepsilon' \leq 1$ and 
 	$$
 	C_P \delta - c_{\varepsilon} \bar{c}_T (\varepsilon')^{8 \theta_{\gamma}/3} - \frac{\varepsilon}{2} =: \delta' > 0,
 	$$ 
 	then by the Gronwall inequality we get on $\Omega'$ (we drop the dependence on $\omega$)
 	\begin{equation}\label{eq:lastY}
 		\| {Y}_t^{n,x_n,u,\alpha} \|_{L^2_{\xi}}^2 \leqslant e^{-2\delta't}\|x_n\|^2_{L^2_{\xi}} + 2 c_{T,\varepsilon,\varepsilon'} \| u \|_{L^2_tL^2_{\xi}}^2 + \frac{c_{T,\varepsilon,\varepsilon'}(1-e^{-2\delta't})}{\delta'} \left( \left({K}_{\varepsilon_1, 4}\right)^{\theta} + 1 \right),
 	\end{equation}
for any $t \in [0,T]$. 	From inequality \eqref{eq:lastY} we can get the claim with $\delta_1 = 2 \delta'$. Indeed we have on $\Omega'$ with $\alpha = \alpha (\omega)$
 	\[\|X_t^{n,x_n,u}\|_{L^2_{\xi}}^2 \leqslant \| {Y}_t^{n,x_n,u,\alpha} \|_{L^2_{\xi}}^2 +\|{\bar z_{\alpha}}^n(t) \|_{L^2_{\xi}}^2 \leqslant \| {Y}_t^{n,x_n,u,\alpha} \|_{L^2_{\xi}}^2 + (\tilde{c}_T)^{2}(\varepsilon')^{2\theta_{\gamma}},\]
$t \in [0,T]$. 	If we insert the relation \eqref{eq:lastY} in the last inequality we get the claim.
 \end{proof}

As a consequence of the proof of Theorem \ref{theorem:apriorigamma} we can  obtain the following Corollary.
 
\begin{corollary}\label{corollary:Yalpha}
With the same hypotheses and notation of Theorem \ref{theorem:apriorigamma}, suppose that we consider the random variable
\begin{equation}\label{eq:alpha0}
	\alpha_0(\omega)=\frac{1}{\varepsilon'}\left( c_T\left({K}_{\varepsilon_1, 4}(\omega)\right)^{\left(\frac{1}{4}-\varepsilon_1-\gamma\right)^{-1}}  + 1+ F(\omega) \right),
\end{equation}
	where $0 < \varepsilon' \leq 1$ is small enough such that $C_P \delta - c_{\varepsilon} \bar{c}_T (\varepsilon')^{8 \theta_{\gamma}/3} - \frac{\varepsilon}{2} =: \delta' > 0$ (the ``upper bound'' on $\varepsilon'$ does depend only on $\gamma$ and $\varepsilon_1$) and $F:\Omega \rightarrow \R_+$ is a square integrable random variable.	
	
	Then there are some constants $c_{1,\varepsilon',T},c_{2,\varepsilon',T}>0$ (depending on $\varepsilon'$ and $T$ but not on $F$) and a constant $c_{\gamma, \varepsilon_1} > 0$ (depending on $\gamma$ and $\varepsilon_1$ but not on $F$) such that, for any $\omega$ $\P$-a.s.,
\begin{equation}
\| Y_t^{n,x_n,u,\alpha_0(\omega)} (\omega)\|_{L^2_{\xi}}^2 \leqslant e^{-\delta_1 t} \|x_n\|_{L^2_{\xi}}^2+ c_{1,\varepsilon',T}\Big (1+F^{2}(\omega)+({K}_{\varepsilon_1,  4}(\omega))^{\theta}+ \| u(\omega)\|_{L^2_tL^2_{\xi}}^2\Big),\label{eq:inequalityY1}
\end{equation}
\vskip -6mm 
\begin{multline}  
\int_0^T\|\nabla Y_s^{n,x_n,u,\alpha_0(\omega)} (\omega)\|_{L^2_{\xi}}^2\mathd s \\
\leqslant	c_{\gamma, \varepsilon_1} \|x_n\|_{L^2_{\xi}}^2 + c_{2,\varepsilon',T} \Big(1+  F^{2}(\omega)+ (	{K}_{\varepsilon_1,  4}(\omega))^{\theta}+ \| u(\omega)\|_{L^2_tL^2_{\xi}}^2 \Big). \label{eq:inequalityY2}
\end{multline}
\end{corollary}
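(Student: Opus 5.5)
The plan is to rerun Step 1 and Step 2 of the proof of Theorem \ref{theorem:apriorigamma} with $\alpha(\omega)$ replaced by $\alpha_0(\omega)$ from \eqref{eq:alpha0}. The differential inequality \eqref{vs} holds for every $\omega\in\Omega'$, every $\alpha\ge 1$ and a.e. $t$, with constants $c_\varepsilon,c'_\varepsilon,\delta$ depending only on $\varepsilon$ and $\gamma$. We may therefore substitute any random $\alpha\ge1$, in particular $\alpha_0(\omega)\ge 1/\varepsilon'\ge 1$.

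First, since $F\ge0$ and $c_T\ge1$, we have $\alpha_0\ge \frac{1}{\varepsilon'}(K_{\varepsilon_1,4})^{(\frac14-\varepsilon_1-\gamma)^{-1}}$. By \eqref{finite} with $\kappa=0$, $p=4$, this gives
$$\|\bar z^n_{\alpha_0}(t)\|_{L^2_\xi}\le\|\bar z^n_{\alpha_0}(t)\|_{L^4_\xi}\le \alpha_0^{-(\frac14-\varepsilon_1-\gamma)}c_TK_{\varepsilon_1,4}\le \tilde c_T(\varepsilon')^{\theta_\gamma},$$
exactly as in the theorem. Hence the coefficient of $\|Y\|^2_{L^2_\xi}$ in \eqref{vs} is at most $c_\varepsilon\bar c_T(\varepsilon')^{8\theta_\gamma/3}+\varepsilon/2$, and the term $\|\bar z^n_{\alpha_0}\|^4_{L^4}$ is bounded by a constant.

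The only new point is the forcing term $\alpha_0^2\|\bar z^n_{\alpha_0}(t)\|_{L^2}^2$. Using \eqref{finite} I bound it by $c_T^2\alpha_0^{2-2(\frac14-\varepsilon_1-\gamma)}K^2_{\varepsilon_1,4}\le c_T^2\alpha_0^2K_{\varepsilon_1,4}^2$. Expanding $\alpha_0^2\lesssim(\varepsilon')^{-2}(K^{2(\frac14-\varepsilon_1-\gamma)^{-1}}+1+F^2)$ then yields terms $K^{2}F^2$ and $K^{2+\theta}$. These do not have the announced form $1+F^2+K^\theta$, so this crude bound does not suffice.

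The better route is to bound $\|\bar z^n_{\alpha_0}\|_{L^2}$ more cleverly. With the decay exponent $\beta=\frac14-\varepsilon_1-\gamma$, write $\alpha_0^2\|\bar z^n_{\alpha_0}\|^2\le c_T^2\alpha_0^{2-2\beta}K^2$ and use $K\le(\varepsilon'\alpha_0)^{\beta}$, which gives $\le c\,\alpha_0^{2}$. Then $\alpha_0^2\lesssim_{\varepsilon'}1+F^2+K^{2/\beta}=1+F^2+K^\theta$, and this matches the statement, since $\theta=2/\beta$. I expect this bookkeeping step to be the main obstacle: the power of $K$ has to come out as exactly $\theta$ and $F$ has to enter only quadratically, which requires reusing the defining inequality of $\alpha_0$ rather than naive products.

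With $\delta'>0$ fixed by the choice of $\varepsilon'$, the rest follows. Poincaré and Gronwall give \eqref{eq:inequalityY1} as in \eqref{eq:lastY}, with $\delta_1=2\delta'$. For \eqref{eq:inequalityY2} I do not apply Poincaré to the whole gradient term. I keep $\frac{\delta}{2}\|\nabla Y\|^2$ on the left and use the other half to absorb the $\|Y\|^2$ coefficient, which the choice of $\varepsilon'$ permits after shrinking constants. Integrating over $[0,T]$ then gives
$$\delta\int_0^T\|\nabla Y_s\|^2\,ds\le\|x_n\|^2+c\,(1+F^2+K^\theta+\|u\|^2_{L^2_tL^2_\xi}).$$
The coefficient of $\|x_n\|^2$ depends only on $\delta$, hence only on $\gamma$ and $\varepsilon_1$. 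The remaining constants depend on $\varepsilon'$ and $T$ but not on $F$.
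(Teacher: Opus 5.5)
Your route is the paper's: reuse \eqref{vs} with the random parameter $\alpha_0(\omega)\ge 1$, control $\bar z^n_{\alpha_0}$ through \eqref{finite}, then apply Gronwall for \eqref{eq:inequalityY1} and integrate in time for \eqref{eq:inequalityY2}. Your handling of the forcing term is correct. It is not really an obstacle, though. Once you have $\|\bar z^n_{\alpha_0}(t)\|_{L^4_\xi}\le c_T(\varepsilon')^{\beta}$ with $\beta=\frac14-\varepsilon_1-\gamma$, you get directly
$\alpha_0^2\|\bar z^n_{\alpha_0}(t)\|^2_{L^2_\xi}\le c_T^2\alpha_0^2\le 3c_T^2(\varepsilon')^{-2}\bigl(c_T^2K_{\varepsilon_1,4}^{\theta}+1+F^2\bigr)$,
exactly as in the theorem. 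The detour through the crude bound is unnecessary.

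The one step that does not work under the stated hypothesis is the splitting you use for \eqref{eq:inequalityY2}. Set $a:=c_\varepsilon\bar c_T(\varepsilon')^{8\theta_\gamma/3}+\frac{\varepsilon}{2}$.

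\begin{itemize}
\item The corollary only assumes $\delta'=C_P\delta-a>0$.
\item Absorbing $a\|Y_t\|^2_{L^2_\xi}$ with half of $\delta\|\nabla Y_t\|^2_{L^2_\xi}$ requires $a\le C_P\delta/2$.
\item That is an extra smallness condition on $\varepsilon'$, not part of the statement. Your phrase ``after shrinking constants'' changes the hypothesis, so as written you only prove the corollary for a subset of the admissible $\varepsilon'$.
\end{itemize}

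The fix is easy, and there are two ways to do it.

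\begin{itemize}
\item Absorb with the whole gradient. Using Poincar\'e, normalized as in the definition of $\delta'$,
$\delta\|\nabla Y_t\|^2_{L^2_\xi}-a\|Y_t\|^2_{L^2_\xi}\ge \frac{\delta'}{C_P}\|\nabla Y_t\|^2_{L^2_\xi}$.
Integrating over $[0,T]$ then gives the coefficient $C_P/(2\delta')$ in front of $\|x_n\|^2_{L^2_\xi}$.
\item Follow the paper's one-line proof. Integrate \eqref{eq:inequalityYderivative2} over $[0,T]$ as it stands, and control $a\int_0^T\|Y_s\|^2_{L^2_\xi}\,\mathd s$ by \eqref{eq:inequalityY1}, using $\int_0^Te^{-\delta_1 s}\,\mathd s\le T$ and $a<C_P\delta$. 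This gives a coefficient at most $\frac{1}{2\delta}+C_PT$.
\end{itemize}

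Either way the coefficient is independent of $F$, $u$, $n$ and $\omega$, which is all Lemma \ref{deri} needs. The price is that it now involves $\delta'$ (or $T$), not $\delta$ alone as you claim. This is harmless because $\varepsilon'$ is fixed once and for all before $F$ is chosen.
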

\begin{proof}
The inequality \eqref{eq:inequalityY1} holds for $t \in [0,T]$ and can be obtained directly from \eqref{eq:inequalityYderivative2} by replacing in $\alpha$ the expression \eqref{eq:alpha0} instead of the equality \eqref{eq:alphaproof}.\\
Inequality \eqref{eq:inequalityY2} follows by \eqref{eq:inequalityYderivative2}  integrating in time from $0$ to $T$.
\end{proof} 

\subsection{Exponential estimates on $X_t^x$ and $Y^{x,\alpha}_t$}\label{section:exponentialbound}

Thanks to the a-priori estimates of Section \ref{section:applicationBurgers} and the Bou\'e-Dupuis formula of Section \ref{section:BD} we can prove 
\begin{theorem}\label{theorem:exponentialbound} 
Let us fix $T>0$.
Suppose that $X^x_t$ is solution to equation \eqref{eq:Burgersmain1}, that $Y^{n,x_n,\alpha_0}_t=X^{n,x_n}_t-z_{\alpha_0}(t)$. Using the same hypotheses and notation of Theorem \ref{theorem:apriorigamma}, and considering random variables $F$ and $\alpha_0$ of the form in the hypotheses of Corollary \ref{corollary:Yalpha}, there are some $\lambda_{\gamma},\lambda'_{\gamma},\lambda''_{\gamma}>0$ and positive constants $C_{1, \gamma, T}$, $C_{2, \gamma, T}$, $C_{3, \gamma, T, F}$, $\delta_1$, such that for every $\lambda \le \lambda_{\gamma}$, $\lambda' \le \lambda'_{\gamma}$ and $\lambda'' \le \lambda''_{\gamma}$, 
we have  
\begin{align}
	\mathbb{E}\Big[\exp\left(\lambda\sup_{t\in[0,T]}\|X_t^x \|^2_{L^2_{\xi}} \right) \Big]  \leqslant&\exp(\lambda \| x\|_{L^2_{\xi}}^2 +C_{1,\gamma,T})\label{eq:expsupX}\\
	\mathbb{E}\left[\exp\left(\lambda' \|X_t^x \|^2_{L^2_{\xi}} \right) \right]  \leqslant& \exp(\lambda'e^{-\delta_1 t} \| x\|_{L^2_{\xi}}^2+C_{2,\gamma,T})\label{eq:expXt}\\
	\mathbb{E}\left[\exp\left(\lambda''\int_0^T\|\nabla Y^{n,x_n,\alpha_0}_t\|^2_{L^2_{\xi}}\mathd t \right)\right]  \leqslant&\exp(c_{\gamma, \varepsilon_1} \lambda''\|x\|_{L^2_{\xi}}^2+C_{3,\gamma,T,F}),\;\; n \in \mathbb{N}.\label{eq:expYalpha} 
\end{align}  
\end{theorem}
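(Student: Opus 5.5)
Everything will be proved first for the Galerkin approximation and then passed to the limit. Fix $n$ and apply the Bou\'e-Dupuis formula (Proposition \ref{proposition:BD}) to the finite dimensional SDE \eqref{eq:Xapparoximation1} on $H_n\simeq\R^n$ with $C=\pi_n(-A)^{\gamma}$. Global well-posedness of the controlled equation \eqref{eq:Burgersapporximation} for every $u\in\mathcal{P}^{[0,T],H_n}$ has already been noted, so the proposition applies. For \eqref{eq:expsupX} take $G(X_\cdot)=\lambda\sup_{t\in[0,T]}\|X_t\|^2_{L^2_\xi}$. Then
\[
\log\E\Big[e^{\lambda\sup_t\|X^{n,x_n}_t\|^2}\Big]=\sup_u\E\Big[\lambda\sup_t\|X^{n,x_n,u}_t\|^2-\tfrac12\|u\|^2_{L^2_tL^2_\xi}\Big].
\]
By Theorem \ref{theorem:apriorigamma} the expression inside the supremum is bounded by
\[
\lambda\|x_n\|^2+\lambda c_{T,\varepsilon_1}\big(1+\E[K_{\varepsilon_1,4}^{\theta}]\big)+\big(\lambda c_{T,\varepsilon_1}-\tfrac12\big)\E\|u\|^2_{L^2_tL^2_\xi}.
\]
Choosing $\lambda\le\lambda_\gamma:=(2c_{T,\varepsilon_1})^{-1}$ makes the coefficient of the $u$-term nonpositive, so the supremum over $u$ is at most $\lambda\|x\|^2+C_{1,\gamma,T}$. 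Here $C_{1,\gamma,T}$ is finite because $K_{\varepsilon_1,4}$ has all moments (Theorem \ref{theorem:zalphaestimates}). For \eqref{eq:expXt} we do the same with $G=\lambda'\|X_{t}\|^2$ at the fixed time $t$, and use the decaying factor $e^{-\delta_1t}$ from \eqref{eq:aprioriXn}.

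To pass to the limit, note that Theorem \ref{theorem:existenceconvergence} gives $X^{n,x_n}\to X^x$ in $C^0([0,T];H)$ a.s. Hence $\sup_t\|X^{n,x_n}_t\|^2\to\sup_t\|X^x_t\|^2$ a.s., and Fatou's lemma transfers the uniform-in-$n$ bounds (we use $\|x_n\|\le\|x\|$) to $X^x$. One technical point must be checked: Proposition \ref{proposition:BD} is stated for nonnegative measurable $G$ but without an integrability assumption. If needed, we first apply it to $G\wedge M$ (the a-priori bound still controls $\E[G\wedge M]$), obtain bounds uniform in $M$, and then let $M\to\infty$ by monotone convergence.

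For \eqref{eq:expYalpha} we take $G=\lambda''\int_0^T\|\nabla Y^{n,x_n,u,\alpha_0}_s\|^2ds$. This is the main obstacle: $Y$ depends on $\omega$ not only through $X^{n,x_n,u}$ but also through $\bar z^n_{\alpha_0(\omega)}$, with the random parameter $\alpha_0$. So $G$ is not a functional of the solution path alone, and Proposition \ref{proposition:BD} cannot be applied directly. My plan is to apply the Bou\'e-Dupuis formula in its original form, for functionals of the driving Brownian motion (Theorem 4.1/5.1 of \cite{BD}, with $W$ restricted to the first $n$ modes). Here $K_{\varepsilon_1,4}$ and $\bar z_\alpha$ are measurable functionals of $W$, and $F$ should be taken measurable with respect to $W$ as well. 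In the variational problem the shift $W\mapsto W+\int_0^\cdot u$ changes $X^{n,x_n}$ into $X^{n,x_n,u}$. It also shifts $\bar z_\alpha$, $K$ and $F$; the cleanest way around this is to interpret the controlled quantity as $X^{n,x_n,u}$ minus the unshifted $\bar z^n_{\alpha_0}$, exactly as in \eqref{yyn}, and then to check that Corollary \ref{corollary:Yalpha} still holds for this object. The corollary's proof only uses the pathwise bound \eqref{finite} for whatever realization of $\bar z$ appears, so this should go through.

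Granting that, \eqref{eq:inequalityY2} bounds the variational expression by
\[
c_{\gamma,\varepsilon_1}\lambda''\|x\|^2+\lambda''c_{2,\varepsilon',T}\,\E\big[1+F^2+K^\theta_{\varepsilon_1,4}\big]+\big(\lambda''c_{2,\varepsilon',T}-\tfrac12\big)\E\|u\|^2 .
\]
Taking $\lambda''\le\lambda''_\gamma=(2c_{2,\varepsilon',T})^{-1}$ removes the $u$-term. This yields $C_{3,\gamma,T,F}$, which is finite because $F$ is square integrable, and the bound is uniform in $n$.
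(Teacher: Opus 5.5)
\textbf{Estimates \eqref{eq:expsupX} and \eqref{eq:expXt}.} Here your argument is the paper's, and it is correct. You apply Bou\'e--Dupuis to the Galerkin system with $G$ a functional of the controlled path only, use the pathwise bound \eqref{eq:aprioriXn}, take $\lambda\le(2c_{T,\varepsilon_1})^{-1}$, and conclude by Fatou via Theorem \ref{theorem:existenceconvergence}. Inside \eqref{eq:aprioriXn}, using $\bar z_\alpha$ and $K_{\varepsilon_1,4}$ built from the unshifted $W$ is legitimate, because they only serve as auxiliary objects in a pathwise estimate. Your truncation $G\wedge M$ is a harmless extra precaution.

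\textbf{The gap in \eqref{eq:expYalpha}.} The gap is exactly at the point you flagged. The functional is $f(W)=\lambda''\int_0^T\|\nabla(X^{n,x_n}-\bar z^n_{\alpha_0(W)}(W))\|^2dt$, so the variational formula involves $f(W+\int_0^\cdot u)$. Under this shift, three things change:
\begin{itemize}
\item $X^{n,x_n}$ becomes $X^{n,x_n,u}$;
\item $\bar z^n_\alpha(W)$ becomes $\bar z^n_\alpha(W)+\pi_n\int_0^\cdot e^{(A-\alpha)(\cdot-s)}(-A)^{\gamma}u(s)\,ds$;
\item $\alpha_0(W)$ becomes $\alpha_0(W+\int_0^\cdot u)$.
\end{itemize}
You cannot choose to ``interpret'' the controlled quantity as $X^{n,x_n,u}-\bar z^n_{\alpha_0(W)}(W)$. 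That is a different functional of $(W,u)$, and the Bou\'e--Dupuis identity says nothing about it. Corollary \ref{corollary:Yalpha} does hold for that object, but it is the wrong object.

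If you keep the shifted quantities, the $u$-term drops out of the equation and the energy estimate bounds the functional by $\|x\|^2+K_{\varepsilon_1,4}(W+\int_0^\cdot u)^{\theta}+F(W+\int_0^\cdot u)^2$. Two problems remain:
\begin{itemize}
\item From \eqref{eq:Kbalbla}, $K_{\varepsilon_1,4}(W+\int_0^\cdot u)\le K_{\varepsilon_1,4}(W)+C\|u\|_{L^2_tL^2_\xi}$. Since $\theta=2(\frac14-\varepsilon_1-\gamma)^{-1}>8$, this produces a term of order $\|u\|^\theta$, which $-\frac12\|u\|^2$ cannot absorb.
\item $\E[F(W+\int_0^\cdot u)^2]$ is not controlled by $\E[F^2]$ at all.
\end{itemize}
So the argument does not close for any $\lambda''>0$.

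\textbf{The paper does not close this step either.} Its proof of \eqref{eq:expYalpha} is the single sentence ``can be obtained in a similar way'', i.e.\ the same substitution you propose. The obstruction also does not look merely technical, though what follows is only heuristic:
\begin{itemize}
\item $\E\int_0^T\|\nabla(z(t)-\bar z_\alpha(t))\|^2dt$ is of order $T\alpha^{1/2+2\gamma}$ for large $\alpha$.
\item $\alpha_0\gtrsim K_{\varepsilon_1,4}^{(1/4-\varepsilon_1-\gamma)^{-1}}$, and $K_{\varepsilon_1,4}$ has only Gaussian tails.
\item The resulting power of $K_{\varepsilon_1,4}$ is $(\frac12+2\gamma)(\frac14-\varepsilon_1-\gamma)^{-1}>2$.
\end{itemize}
This suggests that $\int_0^T\|\nabla Y^{n,x_n,\alpha_0}_t\|^2dt$ need not have exponential moments uniformly in $n$. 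Proving \eqref{eq:expYalpha} would then need a genuinely different idea, or a different choice of $\alpha_0$.
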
  
\begin{proof}
We apply Proposition \ref{proposition:BD} to a functional of the solution to \eqref{eq:Burgersapporximation}. As functional on $C^0([0,T];H_n)$ we choose $G(k)=\lambda_{\gamma}\sup_{t\in[0,T]}\|k(t)\|_{L^2_{\xi}}^2$, $k \in  C^0([0,T];H_n)$, where $\lambda_{\gamma}$ is a constant to be determined.
 We fix $t \in (0,T]$.  By Proposition \ref{proposition:BD} we have
\begin{multline}\label{eq:proofX1}
	R_n:=\log\Big(\mathbb{E}\Big[\exp\Big(\lambda_{\gamma}\sup_{t\in[0,T]}\|X^{n,x_n,0}_t \|^2_{L^2_{\xi}} \Big) \Big] \Big)\\ 
	=\sup_{u \in \mathcal{P}^{[0,T],H_n}} \mathbb{E}\Big[\lambda_{\gamma}\sup_{t\in[0,T]}\|X^{n,x_n,u}_t \|^2_{L^2_{\xi}}-\frac{1}{2}\|u\|_{L^2_tL^2_{\xi}}^2 \Big].	
\end{multline}
We now apply inequality \eqref{eq:aprioriXn} to the right-hand side of \eqref{eq:proofX1}, obtaining
\begin{multline}
	R_n \leqslant \sup_{u \in \mathcal{P}^{[0,T],H_n}}\mathbb{E}\Big[\lambda_{\gamma}\|x\|_{L^2_{\xi}}^2 + \lambda_{\gamma} c_{T, \varepsilon_1} \Big( \Big({K}_{\varepsilon_1,  4}\Big)^{\theta} + 1 \Big)\Big.\\
	\Big.+\Big(\lambda_{\gamma} c_{T, \varepsilon_1}-\frac{1}{2}\Big)\|u \|^2_{L^2_t L^2_{\xi}} \Big]. \label{eq:proofX2}
\end{multline} 
If we choose $\lambda_{\gamma}<\frac{1}{2 c_{T, \varepsilon_1}}$ the term within expectation in \eqref{eq:proofX2} is bounded from above by an expression not depending on $u$ anymore. This implies that 
\[R_n \leqslant \lambda_{\gamma}\|x\|_{L^2_\xi}^2+C_{1,\gamma,T}, \]
where $C_{1,\gamma,T}$ is given by the expression 
\[C_{1,\gamma,T}=\lambda_{\gamma} c_{T, \varepsilon_1} \mathbb{E} \left[\left({K}_{\varepsilon_1,  4}\right)^{\theta} + 1 \right],\]
	which is bounded since ${K}_{\varepsilon_1,  4}$ has all finite moments. Since, by Theorem \ref{theorem:existenceconvergence}, $X^{n,x_n,0}_t$ converges to $X^{x}_t$ in $C^0([0,T];H)$, applying the Fatou Lemma, we obtain inequality \eqref{eq:expsupX}. Inequalities \eqref{eq:expXt} and \eqref{eq:expYalpha} can be obtained in a similar way.   
\end{proof} 

\section{Further applications to Burgers equations} \label{section:applicationBurgers1}
 
\subsection{Exponential integrability of the invariant measure of Burgers equation}

In this section we prove that any invariant measure $\nu$ of equation \eqref{eq:Burgersmain1} has exponential tails (the existence and uniqueness of the invariant measure for equation \eqref{eq:Burgersmain1} is discussed in Section \ref{section:appendix}).

\begin{theorem}\label{theorem:expBurgersinvariant}
Let $\nu$ be any invariant measure to equation \eqref{eq:Burgersmain1}, then we have that
\[\int_{H} \exp(\lambda'\|x\|_{L^2_{\xi}}^2)\nu(\mathd x) < +\infty, \]
where $\lambda' \le \lambda'_{\gamma}$  ($\lambda'_{\gamma}$ is the constant in Theorem \ref{theorem:exponentialbound}).   
\end{theorem}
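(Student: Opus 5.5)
The plan is to combine invariance of $\nu$ with the pointwise-in-time estimate \eqref{eq:expXt}. Its key feature is the contracting factor $e^{-\delta_1 t}$ in front of $\|x\|^2$. I will work with truncated functionals so that every integral is finite before any rearrangement is made.

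Fix $\lambda'\le\lambda'_\gamma$ and $M>0$, and set $\varphi_M(x)=\exp(\lambda'\min(\|x\|^2_{L^2_\xi},M))$, a bounded continuous function on $H$. By invariance, for every $t\in(0,T]$,
\[
\int_H\varphi_M\,\mathd\nu=\int_H P_t\varphi_M\,\mathd\nu .
\]
Since $\varphi_M(y)\le\exp(\lambda'\|y\|^2)$, estimate \eqref{eq:expXt} gives
\[
P_t\varphi_M(x)\le \exp\bigl(\lambda' e^{-\delta_1 t}\|x\|^2+C_{2,\gamma,T}\bigr).
\]
Fix $t=T$ and write $q:=e^{-\delta_1 T}<1$. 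The bound above still involves the untruncated quantity $\exp(\lambda' q\|x\|^2)$, which is not yet known to be $\nu$-integrable. To close the argument I also use the trivial bound $P_T\varphi_M\le e^{\lambda' M}$, so that
\[
P_T\varphi_M(x)\le \min\bigl(e^{\lambda' M},\,e^{C_2}e^{\lambda' q\|x\|^2}\bigr)\le e^{C_2}\exp\bigl(\lambda' \min(q\|x\|^2,M)\bigr)\le e^{C_2}\exp\bigl(\lambda' q\min(\|x\|^2,M)\bigr)+\dots
\]
More cleanly, $\min(q\|x\|^2,M)\le \min(\|x\|^2,M)$, and I apply Hölder with exponents $1/q$ and $1/(1-q)$:
\[
\exp\bigl(\lambda' q\min(\|x\|^2,M/q)\bigr)=\varphi_{M/q}(x)^q .
\]
Combining these yields
\[
I_M:=\int\varphi_M\,\mathd\nu\le e^{C_2}\int \varphi_{M/q}^{\,q}\,\mathd\nu\le e^{C_2} I_{M/q}^{\,q}.
\]
This relates the truncation levels $M$ and $M/q$ in the wrong direction, which is the main obstacle. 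To avoid it, I choose the truncation as $\min(\|x\|^2,M)$ both inside $\varphi_M$ and in the bound on $P_T\varphi_M$. I rely on the fact that Theorem \ref{theorem:exponentialbound}, proved via Proposition \ref{proposition:BD} with a general functional $G$, applies equally to $G(k)=\lambda'\min(\|k(T)\|^2,M)$. Since $\min$ preserves the upper bound from \eqref{eq:aprioriXn} in the form $\min(a,M)\le \min(q b,M)+c$ for $a\le qb+c$, this gives
\[
P_T\varphi_M(x)\le \exp\bigl(\lambda'\min(q\|x\|^2,M)+C_2\bigr)\le e^{C_2}\varphi_M(x)^q .
\]
Integrating against $\nu$ and applying Jensen/Hölder gives $I_M\le e^{C_2}I_M^{\,q}$. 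Because $I_M<\infty$ by truncation, this forces $I_M\le e^{C_2/(1-q)}$ uniformly in $M$. Letting $M\to\infty$ and using monotone convergence gives $\int\exp(\lambda'\|x\|^2)\,\mathd\nu\le e^{C_{2,\gamma,T}/(1-q)}<\infty$.

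The delicate step is justifying the truncated version of \eqref{eq:expXt}. I would redo the Boué–Dupuis argument of Theorem \ref{theorem:exponentialbound} with $G=\lambda'\min(\|k(T)\|^2,M)$. Inside the supremum over $u$, I bound $\min(e^{-\delta_1T}\|x\|^2+c(K^\theta+\|u\|^2+1),M)\le\min(e^{-\delta_1T}\|x\|^2,M)+c(K^\theta+\|u\|^2+1)$, after which the same choice of $\lambda'$ and Fatou's lemma with Theorem \ref{theorem:existenceconvergence} go through. Invariance applied to bounded continuous $\varphi_M$ needs only the Feller property, which follows from the continuous dependence given by Theorem \ref{theorem:existenceconvergence}.
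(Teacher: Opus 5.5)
There is a genuine gap in the step that closes the recursion. You claim
\[
P_T\varphi_M(x)\le \exp\bigl(\lambda'\min(q\|x\|^2,M)+C_2\bigr)\le e^{C_2}\varphi_M(x)^q ,
\]
and the second inequality amounts to $\min(qa,M)\le q\min(a,M)$ with $a=\|x\|^2$. This is false whenever $a>M$. Then $q\min(a,M)=qM$, while $\min(qa,M)$ equals $qa$ or $M$, and either is strictly larger than $qM$. In fact the inequality always goes the other way, and the correct relation is the identity you wrote earlier, $\exp(\lambda'\min(q\|x\|^2,M))=\varphi_{M/q}(x)^q$. So your truncated Bou\'e--Dupuis estimate gives nothing beyond the trivial bound $\min\bigl(e^{\lambda'M},e^{C_2}e^{\lambda'q\|x\|^2}\bigr)$, and it is actually weaker since $C_2\ge 0$. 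As a result $I_M\le e^{C_2}I_M^{\,q}$ does not follow. You are left with $I_M\le e^{C_2}I_{M/q}^{\,q}$, which is exactly the wrong-direction relation you had already identified as the obstacle.

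The truncation idea can be rescued without any modified Bou\'e--Dupuis argument, by splitting the space instead of the functional. On $\{\|x\|^2>M\}$ use $P_T\varphi_M\le\|\varphi_M\|_{L^\infty}=e^{\lambda'M}=\varphi_M$. On $\{\|x\|^2\le M\}$ use the untruncated bound \eqref{eq:expXt}. Invariance then gives
\[
\int_H\varphi_M\,\mathd\nu\le e^{C_2}\int_{\{\|x\|^2\le M\}}e^{\lambda'q\|x\|^2}\,\nu(\mathd x)+\int_{\{\|x\|^2>M\}}\varphi_M\,\mathd\nu .
\]
Cancel the finite last term and set $J_M:=\int_{\{\|x\|^2\le M\}}e^{\lambda'\|x\|^2}\,\nu(\mathd x)\le e^{\lambda'M}$. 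H\"older with exponents $1/q$ and $1/(1-q)$ gives $J_M\le e^{C_2}J_M^{\,q}$, hence $J_M\le e^{C_2/(1-q)}$ uniformly in $M$, and monotone convergence concludes. For comparison, the paper applies Young's inequality to \eqref{eq:expXt} with $pe^{-\delta_1 t}<1$. It then absorbs $\frac1p\int_H e^{\lambda'\|x\|^2}\,\mathd\nu$ into the left-hand side, which tacitly assumes this integral is finite. Your concern about a priori finiteness is therefore legitimate, and the splitting above is one way to make that step rigorous.
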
 
\begin{proof}
By Theorem \ref{theorem:exponentialbound} and the Young inequality we
have
\[ \mathbb{E} \left[\exp (\lambda'_{}\| X_t^x \|_{L^2_{\xi}}^{2})  \right] \leqslant \frac{\exp \left( \lambda'_{} p e^{- \delta_1t}
	\| x \|^{2}_{L^2_{\xi}}\right)}{p}  + \frac{\exp (qC_{2,\gamma,T})}{q},\;\; \; t \in [0,T]  . \]
If we take the integral with respect to $\nu$ at both sides and using the
fact that $\nu$ is an invariant measure of equation \eqref{eq:Burgersmain1} it is not difficult to prove that  
\begin{multline*}
	\int_{H} \exp (\lambda'_{} \| x \|_{L^2_{\xi}}^2) \nu
	(\mathd x)= \int_{H}\mathbb{E} [\lambda'_{} \exp ( \| X_t^x \|_{L^2_{\xi}}^2) ] \nu(\mathd x)\\
	 \leqslant \frac{1}{p} \int_{H}\exp\left( \lambda'_{} p e^{- \delta_1t}\| x \|^{2}_{L^2_{\xi}}\right)\nu(\mathd x)+\frac{\exp (qC_{2,\gamma,T})}{q}
\end{multline*}
 with $1/p+ 1/q=1$.
 Now since $\delta_1 >0$  we can choose $p>1$ and $t\in[0,T]$ such that 
\[pe^{-\delta_1t}<1.\] 
With this choice we obtain 
\[\int_{H} \exp (\lambda'_{} \| x \|_{L^2_{\xi}}^2) \nu
(\mathd x) \leqslant \frac{p\exp (qC_{2,\gamma,T})}{(p-1)q}.\]
\end{proof}

	\begin{remark}\label{remark:largedeviation}
	One important application of the exponential bounds, proved in Theorem \ref{theorem:expBurgersinvariant} on the invariant measure of the Burgers equation \eqref{eq:Burgers}, could be the possibility to extend the large deviation results proved in \cite{BFZ2026} for the invariant measures of stochastic Burgers equations with trace class noise. \qed
\end{remark}

\subsection{Sharp Lipschitz improvement property of the semigroup associated with Burgers equation}

We consider the Markov semigroup $(P_t)$ associated with the Burgers equation \eqref{eq:Burgersmain1}, i.e.,  $P_t: {B}_b(H )  \rightarrow {B}_b(H )$, defined as
\[P_t\varphi(x):=\mathbb{E}\left[\varphi(X^x_t)\right],\;\;\; t \ge 0,\; x \in H.
\]
  
The next result improves locally Lipschitz estimates given in \cite{DPD} in the case of white noise. They claim  in Remark 3.5  an estimate like
\begin{equation}\label{dapdeb}
	 \|DP_t \varphi(x)\|_{H^1_{\xi}} \le \frac{ c_T}{t^{7/8}} \| \varphi \|_{L^{\infty}}(1 + \|x\|_{L^6})^4,\;\; t \in (0,T].  
\end{equation}
They explain that \eqref{dapdeb} could be justified using a suitable
approximation of the Burgers equation  - for instance by Galerkin approximations
as we have done. 

On the other hand, our estimates are like the locally Lipschitz estimates proved in \cite{DPDlincei1998} in the case of stochastic Burgers equations driven by trace class  noise. This is way we call our estimate ``sharp locally Lipschitz estimate''. 

The locally Lipschitz estimates proved in \cite{DPDlincei1998} are used   to solve  HJB equations for stochastic Burgers equations in case of trace class noise (see also \cite{DDsicon}). 
   We will treat more general   HJB equations for stochastic Burgers equations in Section  \ref{remark:HJB}.

\begin{theorem}\label{theorem:Lip}
The semigroup $P_t$ is Lipschitz improving, in the sense that for every $\varepsilon>0$ there is a constant $L_{T,\varepsilon}>0$ such that, for any $\varphi \in {B}_b(H)$, for every $t\in(0,T]$, and for every $x,x'\in H$ we have
\begin{equation}\label{eq:LipP}
	|P_t\varphi(x)-P_t\varphi(x')|\leqslant\frac{L_{T,\varepsilon}\exp\left(\varepsilon\left(\|x\|_{L^2_{\xi}}^2+\|x'\|_{L^2_{\xi}}^2\right)\right)}{\sqrt{t}}\|\varphi\|_{L^{\infty}} \|x-x'\|_{L^2_{\xi}}.
\end{equation}
\end{theorem}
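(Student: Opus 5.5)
The plan is to go through the Galerkin approximations \eqref{eq:Xapparoximation1} and the Bismut--Elworthy--Li formula, using the exponential bounds of Theorem \ref{theorem:exponentialbound} (and the analogous bound for the derivative process announced as Lemma \ref{deri}) to control the derivative flow. By density and a monotone class argument it suffices to treat $\varphi\in C^1_b(H)$ (or even $\varphi$ depending on finitely many coordinates). By Theorem \ref{theorem:existenceconvergence}, $P^n_t\varphi(x_n)=\E[\varphi(X^{n,x_n}_t)]\to P_t\varphi(x)$ for continuous bounded $\varphi$. So it suffices to prove \eqref{eq:LipP} for $P^n_t$ with constants uniform in $n$, and then pass to general $\varphi\in B_b(H)$ via the strong Feller/monotone class argument (the estimate is stable under bounded pointwise limits).

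For fixed $n$ the noise $\pi_n(-A)^{\gamma}W$ is nondegenerate on $H_n$, and $(-A)^{-\gamma}$ is bounded on $H_n$ since $\gamma\ge0$ (uniformly in $n$, with norm $\le\alpha_1^{-\gamma}$). The Bismut formula then gives, for $h\in H_n$,
\[
DP^n_t\varphi(x)h=\frac1t\,\E\Big[\varphi(X^{n,x}_t)\int_0^t\langle (-A)^{-\gamma}\eta^{h,n}_s(x),\mathd W_s\rangle\Big],
\]
where $\eta^{h,n}_s=D_xX^{n,x}_s h$ solves the linearized equation $\frac{\mathd}{\mathd s}\eta=A\eta+\pi_n\partial_\xi(X^{n,x}_s\eta)$, $\eta_0=h$. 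Hence
\[
|DP^n_t\varphi(x)h|\le\frac{\|\varphi\|_{L^\infty}}{t}\,C\Big(\E\int_0^t\|\eta^{h,n}_s\|^2\mathd s\Big)^{1/2}.
\]
This yields $1/\sqrt t$ once we show $\sup_{s\le T}\|\eta^{h,n}_s\|\le \|h\|\,\Xi_n(x)$ with $\E[\Xi_n^2]\le C_{T,\varepsilon}\exp(2\varepsilon\|x\|^2)$ uniformly in $n$.

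The main obstacle is this energy estimate for $\eta$. Testing with $\eta$ gives
\[
\tfrac12\tfrac{\mathd}{\mathd s}\|\eta\|^2+\|\nabla\eta\|^2=-\int X\eta\,\partial_\xi\eta .
\]
Following the proof of Theorem \ref{theorem:apriorigamma}, we split $X^{n,x}=Y^{n,x_n,\alpha_0}+\bar z^n_{\alpha_0}$ with the random $\alpha_0$ of Corollary \ref{corollary:Yalpha}. The $\bar z$-part is handled by H\"older, the Gagliardo--Nirenberg bound $\|\eta\|_{L^4}\lesssim\|\eta\|^{3/4}\|\nabla\eta\|^{1/4}$ and the smallness of $\|\bar z^n_{\alpha_0}\|_{L^4}$. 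The $Y$-part uses $\|Y\|_{L^\infty}\lesssim\|Y\|^{1/2}\|\nabla Y\|^{1/2}$ and Young to give
\[
|\textstyle\int Y\eta\partial_\xi\eta|\le\tfrac14\|\nabla\eta\|^2+C\|Y\|\,\|\nabla Y\|\,\|\eta\|^2\le\tfrac14\|\nabla\eta\|^2+C(\|Y\|^2+\|\nabla Y\|^2)\|\eta\|^2 .
\]
Gronwall then yields
\[
\|\eta_s\|^2\le\|h\|^2\exp\Big(C\Big(1+T\sup_{r\le T}\|Y_r\|^2+\int_0^T\|\nabla Y_r\|^2\mathd r\Big)\Big).
\]
Here $\sup_r\|Y_r\|^2\le 2\sup_r\|X^{n,x_n}_r\|^2+C$, because $\|\bar z^n_{\alpha_0}\|$ is bounded.

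The exponent is linear in the quantities controlled exponentially by \eqref{eq:expsupX} and \eqref{eq:expYalpha}, but only for small parameters $\lambda,\lambda''$. A fixed large constant $C$ cannot be absorbed directly. This is where I would work hardest. First I would rescale: the constant $C$ in front of $\|\nabla Y\|^2$ can be made small by choosing the Young split differently, at the price of a larger coefficient for $\|Y\|^2$. That coefficient can itself be reduced by choosing $F$ in $\alpha_0$ large, using the $F$-freedom in Corollary \ref{corollary:Yalpha}. If a fixed $C$ persists, I would localize in time: split $[0,t]$ into subintervals of length $\tau$ small and use the Markov property together with the multiplicative cocycle structure of $\eta$. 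On each piece the constants become $\tau$-small, so they fall below $\lambda_\gamma,\lambda''_\gamma$, and the product of the resulting estimates is controlled via \eqref{eq:expXt}.

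The $x$-dependence comes out as $\exp(c\lambda\|x\|^2)$. To get an arbitrary $\varepsilon$, I would use H\"older with small exponents, or equivalently decrease the effective $\lambda$, at the cost of $L_{T,\varepsilon}$. Finally, integrating $DP_t\varphi$ along the segment from $x'$ to $x$, and bounding $\|x'+r(x-x')\|^2\le 2(\|x\|^2+\|x'\|^2)$, gives \eqref{eq:LipP}.
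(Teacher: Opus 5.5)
Your architecture (Galerkin, Bismut--Elworthy--Li, an $n$-uniform moment bound on $\eta$, integration along the segment, $n\to\infty$, extension to $B_b(H)$) matches the paper's. However, the energy estimate for $\eta$ has a genuine gap, and neither of your remedies closes it.

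You bound the $Y$-part through $\|Y\|_{L^\infty}^2\lesssim\|Y\|\,\|\nabla Y\|$. This leaves the Gronwall exponent $C\int_0^T\|Y_r\|\,\|\nabla Y_r\|\,\mathd r\ge C\pi\int_0^T\|Y_r\|^2\,\mathd r$, a quadratic functional with a fixed per-unit-time coefficient. Theorem \ref{theorem:exponentialbound} only gives exponential moments of quadratic functionals for small, non-explicit parameters, and large-parameter moments fail already for the Ornstein--Uhlenbeck process $z$.

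\begin{itemize}
\item Enlarging $F$ does not help. It only shrinks $\bar z^n_{\alpha_0}$, while $\|Y^{n,x_n,\alpha_0}\|\ge\|X^{n,x_n}\|-\|\bar z^n_{\alpha_0}\|$ is dictated by $X$. Moreover, $F$ enters \eqref{eq:inequalityY1} as an additive $F^2$.
\item Time localization does not help either. If you chain $\|\eta_{t_{i+1}}\|^2\le\Xi_i\|\eta_{t_i}\|^2$ through conditional expectations, the tower property returns exactly $\E\prod_i\Xi_i\,\|h\|^2$. That is the exponential of the same whole-interval functional. Shortening the intervals only makes the $\sup\|Y\|^2$ term look small on each piece, while the coefficients in front of $\|X_{t_i}\|^2$ accumulate.
\item For the same reason, H\"older cannot give an arbitrary $\varepsilon$ in $\exp(\varepsilon\|x\|^2)$. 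With a quadratic exponent of fixed size the coefficient of $\|x\|^2$ is fixed, and H\"older with $p>1$ only increases it.
\end{itemize}

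The missing idea is to make the exponent \emph{sub-quadratic} in $Y$. Put the derivative on $Y$ and use Gagliardo--Nirenberg on $\eta$, as in Lemma \ref{lemma:lemmaeta}:
\[
\Big|\int_\Lambda Y\eta\,\partial_\xi\eta\,\mathd\xi\Big|=\frac12\Big|\int_\Lambda\partial_\xi Y\,\eta^2\,\mathd\xi\Big|\le\|\nabla Y\|\,\|\eta\|_{L^4}^2\lesssim\|\nabla Y\|\,\|\eta\|^{3/2}\|\nabla\eta\|^{1/2}\le\frac14\|\nabla\eta\|^2+C\|\nabla Y\|^{4/3}\|\eta\|^2 .
\]
The Gronwall exponent becomes
\[
C\int_0^T\|\nabla Y_r\|^{4/3}\mathd r\le CT^{1/3}\Big(\int_0^T\|\nabla Y_r\|^2\mathd r\Big)^{2/3}\le\frac{\varepsilon}{c_{\gamma,\varepsilon_1}}\int_0^T\|\nabla Y_r\|^2\mathd r+C_\varepsilon .
\]
So any constant $C$ is absorbed by Young's inequality. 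Then \eqref{eq:expYalpha} with $\lambda''=\varepsilon/c_{\gamma,\varepsilon_1}$ (for $\varepsilon$ small, which suffices) gives $\E\sup_t\|\eta_t\|^2\le\exp(\varepsilon\|x\|^2+C''_{\varepsilon,T})\|h\|^2$ uniformly in $n$, as in Lemma \ref{deri}. This handles the large constant and the arbitrary $\varepsilon$ at once, and $\sup_t\|Y_t\|^2$ never enters.

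Your $\bar z$-part is fine, with $\|\bar z^n_{\alpha_0}\|_{L^4}$ bounded deterministically through the choice of $\alpha_0$ as in Theorem \ref{theorem:apriorigamma}. It is even slightly more economical than the paper's $C_\xi$-norm bound, since it needs no extra $F$.
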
 
In order to prove Theorem \ref{theorem:Lip} we need some lemmas. 

First we note that the process solving the analogous of equation \eqref{eq:Burgersapporximation} is differentiable with respect the initial data $x_n$. The derivatives with respect to $x_n$ of $X^{n,x_n}:=X^{n,x_n,0}$ in the direction $h_n=\pi_n(h)\in H_n$, i.e., $\eta_t^{x_n,h_n} = D_x X^{n,x_n}_t[h_n],$ is given by the unique solution to the following (finite dimensional) differential equation (cf. Section 5.4 in  \cite{DP} which considers a trace class  noise)
\begin{equation}\label{eq:eta1}
	\frac{\mathd \eta_t^{x_n,h_n}}{\mathd t}=A\eta_t^{x_n,h_n}+2\pi_n\left(\partial_{\xi}\left(X^{n,x_n}_t \eta_t^{x_n,h_n}\right)\right),\quad \eta_{0}^{x_n,h_n}=h_n.
\end{equation}
In the next result we also use the notation $C_{\xi} = C(\bar \Lambda)$. 

\begin{lemma} \label{lemma:lemmaeta} There are some constants $L_{1,\gamma,T}>0$, $L_{2,\gamma,T}\geqslant 0$, such that,
for any $\alpha \geq 0$, $n \in {\mathbb N,}$ we have,
$\P$-a.s.,
\begin{multline}\label{eq:estimateeta}
\sup_{t \in [0,T]}\|\eta_t^{x_n,h_n}\|_{L^2_{\xi}}^2+L_{1,\gamma,T}\int_0^T \|\nabla\eta^{x_n,h_n}_s\|^2_{L^2_{\xi}} \mathd s \\
\leqslant \exp\left(L_{2,\gamma,T}\left( \|{\bar z_{\alpha}}^n
\|_{ L_{t}^{2}  C^{}_{\xi}}^{2}
+ \| Y^{n,x_n,\alpha}\|_{L^2_tH^1_{\xi}}^{4/3} \right) \right)\|h_n\|_{L^2_{\xi}}^2.
\end{multline}   
\end{lemma}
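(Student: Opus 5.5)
We will derive an energy inequality for $\eta_t:=\eta_t^{x_n,h_n}$ from \eqref{eq:eta1} and close it with the Gronwall lemma. The key step is to decompose $X^{n,x_n}_t={\bar z_{\alpha}}^n(t)+Y^{n,x_n,\alpha}_t$, exactly as in the definition preceding \eqref{eq:Yapproximation13}, and to estimate the two pieces differently. Taking the $L^2_\xi$ scalar product of \eqref{eq:eta1} with $\eta_t\in H_n$ (so that $\pi_n$ can be dropped) and integrating by parts with Dirichlet boundary conditions gives
\begin{equation*}
\frac12\frac{\mathd}{\mathd t}\|\eta_t\|^2_{L^2_\xi}+\|\nabla\eta_t\|^2_{L^2_\xi}=-2\int_\Lambda X^{n,x_n}_t\,\eta_t\,\partial_\xi\eta_t\,\mathd\xi .
\end{equation*}

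The ${\bar z_{\alpha}}^n$ part is handled directly:
\begin{equation*}
2\Big|\int_\Lambda {\bar z_{\alpha}}^n(t)\,\eta_t\,\partial_\xi\eta_t\,\mathd\xi\Big|\le 2\|{\bar z_{\alpha}}^n(t)\|_{C_\xi}\|\eta_t\|\,\|\nabla\eta_t\|\le \tfrac14\|\nabla\eta_t\|^2+4\|{\bar z_{\alpha}}^n(t)\|_{C_\xi}^2\|\eta_t\|^2 .
\end{equation*}
Here $\|{\bar z_{\alpha}}^n(t)\|_{C_\xi}$ is finite and continuous in $t$ because $H_n$ is finite dimensional; Proposition \ref{continuo} together with \eqref{pinn} gives the joint continuity.

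The $Y^{n,x_n,\alpha}$ part is where the exponent $4/3$ has to come from, and it is the main point to get right. We use H\"older to write
\begin{equation*}
\Big|\int_\Lambda Y\eta\,\partial_\xi\eta\Big|\le \|Y\|_{L^4_\xi}\|\eta\|_{L^4_\xi}\|\nabla\eta\| .
\end{equation*}
Arguing as in the proof of Theorem \ref{theorem:apriorigamma}, Sobolev embedding, interpolation and Poincar\'e give $\|\eta\|_{L^4_\xi}\lesssim\|\eta\|^{3/4}\|\nabla\eta\|^{1/4}$. Since $\|Y\|_{L^4_\xi}\lesssim\|Y\|_{H^1_\xi}$, the term is bounded by $C\|Y\|_{H^1_\xi}\|\eta\|^{3/4}\|\nabla\eta\|^{5/4}$. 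Young's inequality with exponents $8/5$ and $8/3$ then yields
\begin{equation*}
\tfrac14\|\nabla\eta\|^2+C\|Y_t\|_{H^1_\xi}^{8/3}\|\eta\|^2 .
\end{equation*}
However, $\int_0^T\|Y\|_{H^1}^{8/3}$ is not controlled by $\|Y\|_{L^2_tH^1_\xi}$, so this estimate is too crude. To obtain an exponent $\le 2$ in time we must use a lower Sobolev norm of $Y$. The refined route is
\begin{equation*}
\|Y\|_{L^4_\xi}\lesssim \|Y\|_{L^2_\xi}^{3/4}\|Y\|_{H^1_\xi}^{1/4}\quad\text{or}\quad \|Y\|_{L^\infty_\xi}\lesssim\|Y\|^{1/2}\|Y\|_{H^1}^{1/2},
\end{equation*}
and we would first try
\begin{equation*}
\Big|\int Y\eta\,\partial_\xi\eta\Big|\le\|Y\|_{L^\infty}\|\eta\|\,\|\nabla\eta\|\le\tfrac14\|\nabla\eta\|^2+C\|Y\|_{L^\infty}^2\|\eta\|^2,
\end{equation*}
with
\begin{equation*}
\int_0^T\|Y\|_{L^\infty}^2\lesssim \int_0^T\|Y\|_{L^2_\xi}\|Y\|_{H^1_\xi}\le \|Y\|_{L^\infty_tL^2_\xi}\|Y\|_{L^2_tH^1_\xi}\sqrt{T}.
\end{equation*}
This route introduces a factor $\|Y\|_{L^\infty_tL^2_\xi}$, which the statement does not contain. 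We therefore prefer to split with H\"older in the form $\|Y\|_{L^4}\|\eta\|_{L^4}\|\nabla\eta\|$ and to bound $\|Y\|_{L^4}\lesssim\|Y\|_{H^{1/4}}\le\|Y\|_{H^1}$ only to the power needed. Choosing the Young exponents so that the power of $\|Y\|_{H^1_\xi}$ multiplying $\|\eta\|^2$ is $4/3$ yields, pointwise in $t$, a coefficient $C(1+\|Y_t\|_{H^1_\xi}^{4/3})$, possibly after using Poincar\'e on $\eta$ and absorbing into $\|\nabla\eta\|^2$. Then Jensen's/H\"older's inequality in time gives
\begin{equation*}
\int_0^T\|Y\|^{4/3}_{H^1_\xi}\le T^{1/3}\|Y\|^{4/3}_{L^2_tH^1_\xi}.
\end{equation*}
The constant $T$-dependence is absorbed into $L_{2,\gamma,T}$.

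With both parts estimated, we collect terms to get
\begin{equation*}
\frac{\mathd}{\mathd t}\|\eta_t\|^2+L_1\|\nabla\eta_t\|^2\le g(t)\|\eta_t\|^2,\qquad g(t)=C\big(\|{\bar z_{\alpha}}^n(t)\|_{C_\xi}^2+\|Y_t\|_{H^1_\xi}^{4/3}\big).
\end{equation*}
Gronwall gives
\begin{equation*}
\sup_t\|\eta_t\|^2\le e^{\int_0^Tg}\|h_n\|^2,
\end{equation*}
and integrating the inequality in time bounds $L_1\int_0^T\|\nabla\eta\|^2$ by $(1+\int_0^T g)\,e^{\int_0^T g}\|h_n\|^2\le e^{2\int_0^T g}\|h_n\|^2$. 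Adding the two estimates yields \eqref{eq:estimateeta} with $L_{2,\gamma,T}=2C\max(1,T^{1/3})$, after absorbing the factor $2$. The bound holds for every fixed $\alpha\ge0$ on a full-measure set; continuity in $\alpha$ (Proposition \ref{continuo}) lets us take that set independent of $\alpha$.

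The main obstacle is the interpolation bookkeeping in the $Y$-term, namely achieving the exponent $4/3$ on $\|Y\|_{H^1_\xi}$ without leaving a stray $L^\infty_tL^2_\xi$ factor. If that fails, we fall back on the $L^\infty_\xi$ route and absorb $\|Y\|_{L^\infty_tL^2_\xi}$ via \eqref{eq:inequalityY1}.
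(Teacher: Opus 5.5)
Your handling of the ${\bar z_{\alpha}}^n$ part, the H\"older-in-time step $\int_0^T\|Y_t\|_{H^1_\xi}^{4/3}\mathd t\le T^{1/3}\|Y\|_{L^2_tH^1_\xi}^{4/3}$, and the Gronwall closing are fine. The step that is supposed to produce the exponent $4/3$, however, fails.

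\textbf{Why your splitting cannot give $4/3$.} As long as the derivative sits on $\eta$ in $\int_\Lambda Y\eta\,\partial_\xi\eta\,\mathd\xi$, the factor $\eta$ carries at least one full derivative in $L^2_\xi$, and the Young exponents are not at your disposal.
\begin{enumerate}
\item With your split, $\|Y\|_{L^4_\xi}\|\eta\|_{L^4_\xi}\|\nabla\eta\|\lesssim\|Y\|_{L^4_\xi}\|\eta\|^{3/4}\|\nabla\eta\|^{5/4}$. Isolating $\|\nabla\eta\|^2$ forces the exponents $8/5$ and $8/3$, so the coefficient of $\|\eta\|^2$ is $\|Y\|_{L^4_\xi}^{8/3}$.
\item With the $L^\infty_\xi$ split the coefficient is $\|Y\|_{L^\infty_\xi}^2$.
\item By scaling $Y\mapsto\lambda Y$, the only bound of $\|Y\|_{L^4_\xi}$ or $\|Y\|_{L^\infty_\xi}$ by $\|Y\|_{H^1_\xi}$ alone has power one. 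So you end up with $\|Y\|_{H^1_\xi}^{8/3}$ or $\|Y\|_{H^1_\xi}^{2}$, never $4/3$.
\item Interpolating $Y$ instead reintroduces an $L^2_\xi$ factor. Poincar\'e on $\eta$ only trades $\|\eta\|$ for $\|\nabla\eta\|$, which makes the Young exponent worse.
\end{enumerate}
So ``choosing the Young exponents so that the power is $4/3$'' is impossible from that splitting. Your fallback does not prove the statement either. It changes the right-hand side, and \eqref{eq:inequalityY1} holds only for the particular random $\alpha_0(\omega)$, not for every $\alpha\ge 0$.

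\textbf{The missing idea.} Integrate the $Y$-part by parts in the opposite direction to the ${\bar z_{\alpha}}^n$-part. Since $\eta_t\in H_n$ vanishes on $\partial\Lambda$,
\[
-2\int_\Lambda Y_t\,\eta_t\,\partial_\xi\eta_t\,\mathd\xi=\int_\Lambda \partial_\xi Y_t\,\eta_t^2\,\mathd\xi,\qquad \Big|\int_\Lambda \partial_\xi Y_t\,\eta_t^2\,\mathd\xi\Big|\le \|\partial_\xi Y_t\|\,\|\eta_t\|_{L^4_\xi}^2\lesssim \|Y_t\|_{H^1_\xi}\|\eta_t\|^{3/2}\|\nabla\eta_t\|^{1/2}.
\]
Young with exponents $4$ and $4/3$ then gives $\tfrac14\|\nabla\eta_t\|^2+C\|Y_t\|_{H^1_\xi}^{4/3}\|\eta_t\|^2$.

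This is what the paper does; it is the bound imported from Lemma 5.12 of Da Prato's book. The derivative is kept on $Y$, which is controlled in $H^1_\xi$. It is moved onto $\eta$ only for the rough term ${\bar z_{\alpha}}^n$, which is controlled only in $C_\xi$.

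The exponent $4/3<2$ is not cosmetic. In Lemma \ref{deri} it allows $L_{2,\gamma,T}\|Y\|_{L^2_tH^1_\xi}^{4/3}\le C_\varepsilon+\varepsilon\|Y\|_{L^2_tH^1_\xi}^2$ with $\varepsilon$ small, so that \eqref{eq:expYalpha} applies. An exponent $2$, which is the best your derivative-on-$\eta$ route can give, would not allow this.

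A minor point: your final factor $2$ cannot literally be absorbed into the exponential when its argument is near $0$. A harmless constant prefactor is needed, and the paper's one-line Gronwall conclusion has the same looseness.
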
  
\begin{proof}
We follow  a part of the proof of Lemma 5.12 of \cite{DP}. The main differences in the proof are due to the lack of (space) regularity of the process ${\bar z_{\alpha}}$ in our case. In Lemma \ref{lemma:approximationderivatives} we will use
the fact that \eqref{eq:estimateeta} contains the term $ \|{\bar z_{\alpha}}^n
\|_{ L_{t}^{2}  C^{}_{\xi}}^{2}$.

First we multiply both sides of \eqref{eq:eta1} by $\eta_t^{x_n,h_n}$ and then 
we integrate by parts with respect to the space variable obtaining
\begin{equation}\label{eq:eta2}
	\frac{1}{2}\frac{\mathd}{\mathd t}\|\eta_t^{x_n,h_n} \|_{L^2_{\xi}}^2+\|\nabla \eta_t^{x_n,h_n} \|_{L^2_{\xi}}^2 = 2\int_{\Lambda}{\partial_{\xi}({\bar z_{\alpha}}^n(t))|\eta_t^{x_n,h_n}|^2\mathd \xi}+2\int_{\Lambda}{\partial_{\xi}(Y^{n,x_n,\alpha}_t)|\eta_t^{x_n,h_n}|^2\mathd \xi} 
\end{equation}
 (recall that we are considering Dirichlet boundary conditions).
We are also using that 
\begin{equation*} 
	Y_t^{n,x_n,\alpha} = X^{n,x_n}_t - {\bar z_{\alpha}}^n(t).
\end{equation*}
The second term on the right-hand side of \eqref{eq:eta2} can be bound as in Lemma 5.12 of \cite{DP} obtaining 
\[\left|\int_{\Lambda}{\partial_{\xi}(Y^{n.x_n,\alpha}_t)|\eta_t^{x_n,h_n}|^2\mathd \xi} \right| \leqslant C \|Y^{n,x_n,\alpha}_t \|_{H^1_{\xi}}^{4/3} \|\eta_t^{x_n,h_n}\|_{L^2_{\xi}}^2+\frac{1}{4} \|\nabla\eta_t^{x_n,h_n}\|_{L^2_{\xi}}^2,\]
where $C>0$ is a suitable constant. 
To treat the first term on the right-hand side of \eqref{eq:eta2} we compute
\begin{gather*}   
\int_{\Lambda}\partial_{\xi}(\bar z_{\alpha}^n(t))|\eta_t^{x_n,h_n}|^2\mathd \xi = 
- 2\int_{\Lambda} \bar z_{\alpha}^n(t) \,\eta_t^{x_n,h_n} \partial_{\xi} (\eta_t^{x_n,h_n}) \mathd \xi.
\end{gather*}     
Thus we obtain 
\[\left|\int_{\Lambda}{\partial_{\xi}({\bar z_{\alpha}}^n(t))|\eta_t^{x_n,h_n}|^2\mathd \xi} \right| \leqslant \varepsilon \|\nabla\eta_t^{x_n,h_n}\|_{L^2_{\xi}}^2 + C_{\varepsilon} \|{\bar z_{\alpha}}^n(t)\|_{
C^{}_{\xi}}^{2} \|\eta_t^{x_n,h_n}\|^2_{L^2_{\xi}},
\] 
for a suitable constant $C_{\varepsilon} > 0$.  From the previous reasoning, choosing $\varepsilon>0$ small enough,  we find that there exists  a $\delta>0$ and a constant $C'' \geqslant 0$ such that
\[	
\frac{\mathd}{\mathd t}\|\eta_t^{x_n,h_h} \|_{L^2_{\xi}}^2+\delta \|\nabla \eta_t^{x_n,h_h} \|_{L^2_{\xi}}^2 \leqslant C'' \Big(\| {\bar z_{\alpha}}^n(t)\|^{2}_{C^{}_{\xi}}
+ \|Y^{n,x_n,\alpha}_t \|_{H^1_{\xi}}^{4/3} \Big)\|\eta_t^{x_n,h_h} \|_{L^2_{\xi}}^2.\]
The claim follows from an application of the Gronwall inequality.
\end{proof}

	\begin{remark}\label{remark:continuityeta} 
 Using the a-priori bounds of Lemma \ref{lemma:lemmaeta} and the method of proof of Theorem \ref{theorem:existenceconvergence}, one can prove that, $\P$-a.s.,  the processes $(\eta^{x_n,h_n}_t)$ converges in $C^{0}_tL^2_{\xi}$ to a process $(\eta^{x,h}_t)$ that is the unique (weak and mild) solution to the (random) PDE 
	\begin{equation}\label{eq:etalimit}
		\frac{\mathd \eta^{x,h}_t}{\mathd t}=A\eta^{x,h}_t+2\partial_{\xi}\left(X^{x}_t \eta_t^{x,h}\right),\quad \eta^{x,h}_0=h.
	\end{equation}
Indeed we have (cf. \eqref{eq:inequalitysolconvergence3}) 
\begin{gather*}
		\left\|\int_0^{\cdot}e^{A(\cdot -s)}\partial_{\xi}(X^{n,x_n}_s 
		\eta^{x_n,h_n}_s)\mathd s\right\|_{C^{\rho}_tH^{\rho}_{\xi}}  
		 \lesssim \|X^{n,x_n} \eta^{x_n,h_n} \|_{L^2_tH^{4\rho}_{\xi}}
	\\	 \lesssim  \|\eta^{x_n,h_n} \|_{L^2_tH^{1}_{\xi}}  \|X^{n,x_n} \|_{L^2_t W^{4\rho,4}_{\xi}}. 
\end{gather*}
Using the bounds \eqref{eq:inequalitysolconvergence2} and \eqref{eq:estimateeta} we obtain that 
$(\eta^{x_n,h_n}_t)$ is compact in  $C^{0}_t L^2_{\xi}$. Arguing as in the proof of 
Theorem \ref{theorem:existenceconvergence} we obtain that  $(\eta^{x_n,h_n}_t)$ converges in $C^{0}_tL^2_{\xi}$ to $(\eta^{x,h}_t)$. By  \eqref{eq:estimateeta}  we also get that  $\eta^{x_n,h_n}$
weakly 	converges to  $\eta^{x,h} $ in $L^2_tH^1_{\xi}$.

	
	Exploiting the lower semicontinuity of the norm with respect to weak convergence we are able to deduce, from  \eqref{eq:estimateeta}, that the process $(\eta^{x,h}_t)$
	satisfies the a-priori estimates
\begin{multline}\label{eq:estimateeta2}
	\sup_{t \in [0,T]}\|\eta_t^{x,h}\|_{L^2_{\xi}}^2+L_{1,\gamma,T}\int_0^T \|\nabla\eta^{x,h}_s\|^2_{L^2_{\xi}} \mathd s \\
	\leqslant \exp\left(L_{2,\gamma,T}\left( \|{\bar z_{\alpha}}
	\|_{ L_{t}^{2}  C^{}_{\xi}}^{2}
	+ \| Y^{x,\alpha}\|_{L^2_tH^1_{\xi}}^{4/3} \right) \right)\|h\|_{L^2_{\xi}}^2.
\end{multline} 	
We can also prove the local  Lipschitzianity of the process $X^{x}_t$ with respect to $x \in L^2_{\xi}$ (uniformly in $t \in [0,T]$). Indeed inequality \eqref{eq:estimateeta} implies that the process $X^{x_n,n}_t$ is locally  Lipschitz in  $x$ (recall that  $x_n=\pi_n(x)$) in
a uniform way with respect to both $t$  and $n\in \mathbb{N}$; to this purpose see also the bound \eqref{ddt} on  $Y^{n,x_n}$  or   \eqref{eq:estimateeta1}. By Theorem  \ref{theorem:existenceconvergence} 
we 
know that $X^{x_n,n}_t$ (as a function of $x$ taking values in $C^0_tL^2_{\xi}$) converges 
to $X^x_t$.  Furthermore, by the lower semicontinuity of the Lipschitz norm with respect to pointwise convergence, and the uniform bound \eqref{eq:estimateeta} we obtain the local Lipschitzianity
of $X^x_t$ with respect to $x$ (uniformly in $t \in [0,T]$).

Moreover, differentiating with respect to the spatial variabile in equation \eqref{eq:eta1}	(cf. Proposition 3.3 in \cite{DPDlincei1998}) and arguing as before we get the local Lipschitzianity of the process $\eta^{x,h}_t$ with respect to $x\in L^2_{\xi}$ as a random variable taking values in $L^2_tL^2_{\xi}$. \qed	
\end{remark} 

We define by $(P^n_t)$ the family of   Markov semigroups defined as
\[P^n_t\varphi(x):=\mathbb{E}\left[\varphi(X^{n, x_n }_t)\right],
\]
where $x_n = \pi_n(x)$, $\varphi \in { B}_b(H)$, $t \ge 0$.

The next result can be seen as an improvement of the estimates in Proposition 3.1 of \cite{DPD}. Indeed in such result they only consider  estimates of $\eta_t^{x_n,h_n}$ with an exponential 
weight, i.e.,
they  bound  terms like  
$\E \Big[{\exp \big(-c \int_0^t \|X^{x}_s\|_{L^4_{\xi}}^{8/3}ds\big)}  \cdot  \| \eta_t^{x,h}\|^2 \Big ]$. 

\begin{lemma} \label{deri}
For any $h_n \in H_n$, $0 < \varepsilon < \lambda''_{\gamma} c_{\gamma, \varepsilon_1}$ ($\lambda''_{\gamma} > 0$ from Theorem \ref{theorem:exponentialbound} and $c_{\gamma, \varepsilon_1} > 0$ from Corollary \ref{corollary:Yalpha}),  we have for some positive  constants $L_{1,\gamma,T}$,
 and  $C''_{\varepsilon, T}$
\begin{multline}\label{eq:estimateeta1}
\E \sup_{t \in [0,T]}\|\eta_t^{x_n,h_n}\|_{L^2_{\xi}}^2+
 L_{1,\gamma,T} 
  \E \int_0^T \|\nabla\eta^{x_n,h_n}_s\|^2_{L^2_{\xi}} \mathd s 
\leqslant \exp\left( \varepsilon \|x\|_{L^2_{\xi}}^2+ C''_{\varepsilon,T}\right)\|h_n\|_{L^2_{\xi}}^2.
\end{multline}  
\end{lemma}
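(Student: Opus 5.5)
The plan is to take expectations in the pathwise bound \eqref{eq:estimateeta} of Lemma \ref{lemma:lemmaeta}, applied with the random parameter $\alpha=\alpha_0(\omega)$ of Corollary \ref{corollary:Yalpha}. Lemma \ref{lemma:lemmaeta} holds for every $\alpha\ge 0$, and all the relevant versions are continuous in $\alpha$ (Proposition \ref{continuo}, \eqref{pinn} and the discussion around \eqref{discuss}). Hence, as in Step 2 of the proof of Theorem \ref{theorem:apriorigamma}, the bound holds on a full-measure event with $\alpha$ replaced by $\alpha_0(\omega)$. We then get
\[
\sup_t\|\eta_t^{x_n,h_n}\|^2+L_{1,\gamma,T}\int_0^T\|\nabla\eta_s^{x_n,h_n}\|^2\mathd s\le \exp\Big(L_{2,\gamma,T}\big(\|\bar z^n_{\alpha_0}\|^2_{L^2_tC_\xi}+\|Y^{n,x_n,\alpha_0}\|^{4/3}_{L^2_tH^1_\xi}\big)\Big)\|h_n\|^2 .
\]
By the Cauchy--Schwarz inequality, the expectation of the right-hand side is at most $\|h_n\|^2$ times the product of $\E[\exp(2L_2\|\bar z^n_{\alpha_0}\|^2_{L^2_tC_\xi})]^{1/2}$ and $\E[\exp(2L_2\|Y^{n,x_n,\alpha_0}\|^{4/3}_{L^2_tH^1_\xi})]^{1/2}$. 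We treat these two factors separately.

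\emph{The $Y$ factor.} By Poincaré, $\|Y\|_{L^2_tH^1_\xi}^2\lesssim\int_0^T\|\nabla Y_s\|^2\mathd s$. Since the exponent $4/3<2$, Young's inequality gives, for any $\eta>0$,
\[
2L_2\,a^{4/3}\le \eta\, a^2 + C_\eta,\qquad a=\|Y^{n,x_n,\alpha_0}\|_{L^2_tH^1_\xi}.
\]
Choosing $\eta$ small enough (at most $\lambda''_\gamma$ after absorbing the Poincaré constant), \eqref{eq:expYalpha} bounds this factor by $\exp(c_{\gamma,\varepsilon_1}\eta'\|x\|^2+C)$ with $\eta'$ arbitrarily small. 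After taking the square root, this produces the factor $\exp(\varepsilon\|x\|^2)$ for any $\varepsilon$ in the admissible range, and the constant $C''_{\varepsilon,T}$ grows as $\varepsilon\downarrow0$. This is exactly where the freedom in $\varepsilon$ in the statement comes from.

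\emph{The $z$ factor, which I expect to be the main obstacle.} The choice of $\alpha_0$ is made exactly so that $\bar z^n_{\alpha_0}$ is small in $L^4_\xi$, but here we need smallness in $C_\xi=C(\bar\Lambda)$. I would use \eqref{finite} with $\kappa$ slightly less than $\tfrac14-\gamma$ and $p$ large, together with the Sobolev embedding $W^{2\kappa,p}\subset C(\bar\Lambda)$ for $2\kappa>1/p$. This gives
\[
\|\bar z^n_{\alpha}(t)\|_{C_\xi}\lesssim \alpha^{-\frac14+\varepsilon+\gamma+\kappa}c_TK^{(\kappa)}_{\varepsilon,p},
\]
where the exponent can be kept $\le0$ by taking $\varepsilon+\kappa<\tfrac14-\gamma$. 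Since $\alpha_0\ge1$, we get $\|\bar z^n_{\alpha_0}\|^2_{L^2_tC_\xi}\lesssim T c_T^2 (K^{(\kappa)}_{\varepsilon,p})^2$. The random variable $K$ only has all polynomial moments, not a Gaussian-type moment, so its exponential integrability has to be recovered. To do so I would exploit the freedom in $F$: choose
\[
F=(K^{(\kappa)}_{\varepsilon,p})^{N}
\]
for a large power $N$, which is square integrable. Then $\alpha_0\ge F/\varepsilon'$, and the negative power $\alpha_0^{-(\frac14-\varepsilon-\gamma-\kappa)}$ gives
\[
\|\bar z^n_{\alpha_0}\|_{C_\xi}\lesssim K^{\,1-N(\frac14-\varepsilon-\gamma-\kappa)}\le C \quad\text{whenever } N(\tfrac14-\varepsilon-\gamma-\kappa)\ge1 .
\]
Here the bound holds when $K\ge1$, and is trivial when $K<1$. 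With this choice the $z$ factor is a deterministic constant. The price is that $C_{3,\gamma,T,F}$ in \eqref{eq:expYalpha} now depends on $\E F^2=\E K^{2N}<\infty$, which is harmless since $c_{\gamma,\varepsilon_1}$ is independent of $F$.

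Combining the two factors gives \eqref{eq:estimateeta1} with constants independent of $n$. The remaining care is only in checking measurability and the almost-sure validity of the bound for the random choice $\alpha_0(\omega)$, which follows by the continuity-in-$\alpha$ argument already used in Theorem \ref{theorem:apriorigamma}.
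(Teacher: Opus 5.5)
Your proposal is correct and follows essentially the paper's own argument. You take $F$ to be a power of $K^{(\kappa)}_{\varepsilon,p}$, so that $\|\bar z^n_{\alpha_0}\|_{C_\xi}$ becomes deterministically bounded; the paper takes exactly $N=(\frac14-\varepsilon_2-\gamma-\kappa)^{-1}$. You then use Young's inequality to turn $\|Y^{n,x_n,\alpha_0}\|^{4/3}_{L^2_tH^1_\xi}$ into a small multiple of its square plus a constant, and conclude with \eqref{eq:expYalpha}. Your Cauchy--Schwarz split is harmless but unnecessary, because the $z$-factor ends up deterministic; the paper simply bounds the exponent directly.
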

 \begin{proof}
Let us fix $\epsilon_2 >0$ and $\kappa  >0 $ such that $0 < \kappa + \epsilon_2 < 1/4 - \gamma$. Recall that as in Theorem \ref{theorem:apriorigamma}  $0<\varepsilon_1<\frac{1}{4}-\gamma$. 
By the Sobolev embedding  we know that there exists $p_0=p_{\kappa, \gamma}$ large enough such that $W^{2\kappa, \, p_{0}} \subset C(\bar \Lambda)$. 

By Theorem \ref{theorem:zalphaestimates} and \eqref{finite} we get
 \[
 \|{\bar z_{\alpha}}^n(t) \|_{C_{\xi}^{}} \lesssim \|(-A)^{\kappa}{\bar z_{\alpha}}^n(t)\|_{L^{p_{0}}_{\xi}  }   \lesssim \alpha^{-\frac{1}{4}+\varepsilon_2+\gamma+\kappa} \, K_{\varepsilon_2,p_{0}}^{(\kappa)}, 
 \] 
for all $t \in [0,T]$. Recall that 
 the random variable $\| (-A)^{\kappa} \bar  z_{\alpha}^n(t) \|_{L^{p_0}_{\xi}}$ is continuous in $(\alpha,t)$ and so the previous estimate holds when $\omega$ is varying on an almost-sure event $\Omega'$ which does not depend on $\alpha$ and $t$. 
  (cf. Remark \ref{see}).
We choose, for $\omega \in \Omega'$, 
\[\alpha_0(\omega)
=\frac{1}{\varepsilon'}\Big( \left({K}_{\varepsilon_1, 4}(\omega)\right)^{\left(\frac{1}{4}-\varepsilon_1-\gamma\right)^{-1}} +\left(K_{\varepsilon_2,p_{0}}^{(\kappa)}(\omega)\right)^{\left(\frac{1}{4}-\varepsilon_2-\gamma-\kappa\right)^{-1}} + 1 \Big), \]
where $0 < \varepsilon' \leq 1$ satisfies the conditions of Corollary \ref{corollary:Yalpha}.  For every $t\in[0,T]$, we have on $\Omega'$
\[
\|{\bar z_{{\alpha}_0(\omega)}}^n(t, \omega) \|_{C_{\xi}}  \leqslant C_T,
\]  
for some constant $C_T>0$ (independent of $\omega$, $t \in [0,T]$ and $n \ge 1$). 

Now we can apply Corollary \ref{corollary:Yalpha}, by taking $F(\omega)= \left(K_{\varepsilon_2,p_{0}}^{(\kappa)}(\omega)\right)^{\left(\frac{1}{4}-\varepsilon_2-\gamma-\kappa\right)^{-1}}$ (which have all finite moments) obtaining 
\begin{multline}\label{eq:estimateexponentialderivatives}
	\mathbb{E}\Big[ \exp\Big(L_{2,\gamma,T}\left( \|\bar z_{\alpha_0}^n\|_{L^{2 }_{t}C^{}_{\xi}}^{2}+ \| Y^{n,x_n,\alpha_0}\|_{L^2_tH^1_{\xi}}^{4/3} \Big) \Big) \right]\\
\leqslant \mathbb{E}\left[\exp\left(C'_{\varepsilon,T} + \frac{\varepsilon}{c_{\gamma, \varepsilon_1}} \| Y^{n,x_n,\alpha_0}\|_{L^2_t H^1_{\xi}}^{2} \right)  \right] \leqslant \exp(\varepsilon \|x\|_{L^2_{\xi}}^2+ C''_{\varepsilon,T}),
\end{multline}
by \eqref{eq:expYalpha} for some constants $c_{\gamma, \varepsilon_1}, C'_{\varepsilon, T}, C''_{\varepsilon, T} > 0$. Using the previous estimates and 
  Lemma \ref{lemma:lemmaeta} we obtain the assertion. 
 \end{proof}

\begin{lemma}\label{lemma:approximationderivatives} Consider $\alpha \ge 1$. 
For any $h_n \in H_n$, $0 < \varepsilon < \lambda''_{\gamma} c_{\gamma, \varepsilon_1}$ ($\lambda''_{\gamma} > 0$ from Theorem \ref{theorem:exponentialbound} and $c_{\gamma, \varepsilon_1} > 0$ from Corollary \ref{corollary:Yalpha}), $K_{T, \varepsilon} > 0$ and $\varphi \in {C}_b (H)$, we have
\[|D_{h_n}(P^n_t\varphi)(x)|\leqslant \frac{K_{T,\varepsilon}\exp\left(\varepsilon\|x\|_{L^2_{\xi}}^2\right)}{\sqrt{t}}\|\varphi\|_{L^{\infty}} \|h_n\|_{L^2_{\xi}},\;\; t \in (0,T], \, x \in H,\; n \ge 1.
\] 
\end{lemma}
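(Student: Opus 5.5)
The plan is to use the Bismut--Elworthy--Li formula for the finite-dimensional SDE \eqref{eq:Xapparoximation1}, and then to bound the resulting stochastic integral with Lemma \ref{deri}.

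\textbf{Step 1: the formula.} Since $(-A)^{\gamma}\pi_n$ is invertible on $H_n$, the noise is nondegenerate on $H_n$. Moreover the coefficients are smooth and $X^{n,x_n}$ is differentiable in $x_n$ with derivative $\eta^{x_n,h_n}_t$ solving \eqref{eq:eta1}. For $\varphi\in C_b(H)$ I would first approximate by smooth cylindrical functions, or apply the formula directly to $\varphi|_{H_n}$, which is bounded and continuous on $H_n$. The Bismut--Elworthy--Li formula then gives
\[
D_{h_n}(P^n_t\varphi)(x)=\frac{1}{t}\,\mathbb{E}\Big[\varphi(X^{n,x_n}_t)\int_0^t\big\langle (-A)^{-\gamma}\eta^{x_n,h_n}_s,\mathd W_s\big\rangle\Big].
\]
The integrability needed to justify the formula, namely $\mathbb{E}\int_0^t\|\eta_s\|^2\,\mathd s<\infty$, follows from Lemma \ref{deri}. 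The derivation is standard. One route is the Itô formula for $P^n_{t-s}\varphi(X^{n,x_n}_s)$, which uses the smoothness of $P^n_{t-s}\varphi$ for $t-s>0$ coming from the nondegenerate noise. The other is Malliavin integration by parts with the direction $v_s=(-A)^{-\gamma}\eta_s/t$, using that $D_{v}X_t=\eta_t$ by linearity of \eqref{eq:eta1}.

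\textbf{Step 2: the bound.} Apply the Cauchy--Schwarz inequality and the Itô isometry:
\[
|D_{h_n}P^n_t\varphi(x)|\le\frac{\|\varphi\|_{L^\infty}}{t}\Big(\mathbb{E}\int_0^t\|(-A)^{-\gamma}\eta^{x_n,h_n}_s\|^2_{L^2_\xi}\,\mathd s\Big)^{1/2}.
\]
Since $\gamma\ge0$, the operator $(-A)^{-\gamma}$ is bounded by $\alpha_1^{-\gamma}\le1$ on $H$. Hence the integral is at most $t\,\mathbb{E}\sup_{s\le T}\|\eta_s\|^2$, which by Lemma \ref{deri} (applied with $2\varepsilon$ in place of $\varepsilon$, shrinking $\varepsilon$ if necessary) is at most $t\exp(2\varepsilon\|x\|^2+C'')\|h_n\|^2$. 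Taking square roots yields the factor $t^{-1}\cdot t^{1/2}=t^{-1/2}$ and $\exp(\varepsilon\|x\|^2+C''/2)\|h_n\|$, so the lemma holds with $K_{T,\varepsilon}=e^{C''_{2\varepsilon,T}/2}$. Note that the constant is uniform in $n$ because Lemma \ref{deri} is.

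\textbf{Main obstacle.} The main obstacle is the rigorous justification of the Bismut formula, since the drift is only locally Lipschitz with quadratic growth. I would handle it either by localization with stopping times combined with the moment bound from Lemma \ref{deri}, or by invoking the standard Malliavin-calculus version for SDEs with smooth coefficients that admit global solutions and $L^2$ derivatives. The passage from smooth $\varphi$ to $\varphi\in C_b(H)$ then follows by dominated convergence, since the right-hand side depends only on $\|\varphi\|_{L^\infty}$.
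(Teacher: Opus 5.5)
Your proposal is correct and follows essentially the same route as the paper. The paper applies the Bismut--Elworthy--Li formula to the nondegenerate finite-dimensional SDE \eqref{eq:Xapparoximation1}, then uses Cauchy--Schwarz with the It\^o isometry, the bound $\|(-A)^{-\gamma}\|\le 1$, and the moment estimate of Lemma~\ref{deri}, exactly as you do. Your detour through $2\varepsilon$ is harmless but unnecessary: applying Lemma~\ref{deri} with $\varepsilon$ itself gives $\exp(\varepsilon\|x\|_{L^2_\xi}^2/2)\le\exp(\varepsilon\|x\|_{L^2_\xi}^2)$ after taking square roots, so $\varepsilon$ never needs shrinking.
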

\begin{proof}  

 Since equation \eqref{eq:Xapparoximation1} is a finite-dimensional equation like \eqref{easy} with a non-degenerate matrix $C$ we can apply the Bismut-Elworthy-Li formula  (see Theorem 2.1 of \cite{ElworthyLi1994}) to the semigroup $P^n_t$. This is done also  in Section 4 of \cite{DPDlincei1998}.  We  obtain  
\begin{equation}\label{eq:Bismut-Elwrothy-Li}
	D_{h_n}(P^n_t\varphi)(x)=\frac{1}{t}\mathbb{E}\left[\varphi(X^{n,x_n}_t)\int_0^t {(-A)^{-\gamma}\left(\eta_s^{x_n,h_n}\right) (\pi_n\mathd W_s)} \right],\;\; t\in(0,T].
\end{equation} 
If we use in the previous equation the estimates \eqref{eq:estimateeta1}  and the H\"older inequality   we obtain
\begin{align*}  
	&|D_{h_n}(P^n_t\varphi)(x)| \lesssim \frac{1}{t}\|\varphi\|_{L^{\infty}} \left(\mathbb{E}\left[\int_0^t\|(-A)^{-\gamma}(\eta^{x_n,h_n}_s)\|_{L^2_{\xi}}^2\mathd s \right] \right)^{1/2} 	\\
	&\lesssim \frac{\|\varphi\|_{L^{\infty}}}{\sqrt{t}} \exp(\varepsilon \|x \|_{L^2_{\xi}}^2)\|h_n \|_{L^2_{\xi}}.
\end{align*}
\end{proof}

\begin{remark}\label{remark:c1Pn}
	Thanks to formula \eqref{eq:Bismut-Elwrothy-Li}, using the continuity of $\eta^{x_n,h_n}$  with respect to $x_n$, the continuity of $X^{n,x_n}$ with respect to $x_n$, the bounds \eqref{eq:estimateeta} for $\eta^{x_n,h_n}$, and inequality \eqref{eq:estimateexponentialderivatives} on the expectation of the exponential of $z_{\alpha_0}^n$ and $Y^{n,x_n,\alpha}$, it is easy to see that $D_{h_n} P^n_t\varphi$ is continuous whenever $\varphi$ is continuous. This means that $P^n_t\varphi$ belongs to $C^1(H)$ for $t \in (0,T]$. \qed
\end{remark}

	\begin{remark}\label{remark:logHarnack}
		The proof of Lemma \ref{deri} can be easily modified in order to obtain the following assertion: for any $\varphi \in C^1_b(H)$ we have that 
		\begin{equation}\label{eq:squareinsideout}
			\|DP^n_t(\varphi)(x)\|_{L^2_{\xi}}^2 \leqslant (P^n_t\|D\varphi\|_{L^2_{\xi}}^2)(x)\,  \exp(\varepsilon \|x\|^2_{L^2_{\xi}}+\Phi_{\varepsilon}(t)),
		\end{equation} 
		where $\Phi:\R_+ \rightarrow [0,+\infty)$ is some continuous increasing function. 
		Indeed we have 
		\begin{multline*} 
			|\langle DP_t^n\varphi(x), h_n \rangle|=\left|\mathbb{E}\left[\langle D\varphi(X_t^{n,x_n}), \eta^{x_n,h_n}_t \rangle  \right]\right| \leqslant \mathbb{E}\left[ \|D\varphi(x)\|_{L^2_{\xi}}\| \eta^{x_n,h_n}_t\|_{L^2_{\xi}}  \right]\\
			\leqslant  \left(\mathbb{E}\left[ \|D\varphi(X_t^{n,x_n})\|_{L^2_{\xi}}^2\right]\right)^{\frac{1}{2}} \left(\mathbb{E}\left[ \| \eta^{x_n,h_n}_t\|_{L^2_{\xi}}^2\right]\right)^{\frac{1}{2}}\\
			\leqslant \left(\mathbb{E}\left[ \|D\varphi(X_t^{n,x_n})\|_{L^2_{\xi}}^2\right]\right)^{\frac{1}{2}} \exp(\varepsilon \|x\|_{L^2_{\xi}}^{2}+\Phi(t))\|h_n\|_{L^2_{\xi}},
		\end{multline*} 
		where in the last step we use inequality \eqref{eq:estimateeta1}. By taking the sup when $\|h_n\|_{L^2_{\xi}}=1$, and observing that $\mathbb{E}\left[ \|D\varphi(X_t^{n,x_n})\|_{L^2_{\xi}} ^2\right]=P_t^n(\|D\varphi\|_{L^2_{\xi}}^{2})(x)$ we obtain inequality \eqref{eq:squareinsideout}. \\
		The reason why inequality \eqref{eq:squareinsideout} is important is that, as it has been pointed out at the beginning of Section 3 of \cite{WWLHaranck2011}, inequality \eqref{eq:squareinsideout} is the central property through which one can prove log-Harnack inequality for a Markov semigroup associated with an SPDE (see the proof of Proposition 3.3 of \cite{WWLHaranck2011}, for a detailed proof of this relation in the case of stochastic Burgers equation with trace class noise, see also the proof of Proposition 3.2 of \cite{RW2010} and the proof of Theorem 3.1 of \cite{WZ2014}). \qed
	\end{remark}

\begin{proof}[Proof of Theorem \ref{theorem:Lip}] Let $\varphi$ be a bounded and continuous function defined on $H$. Thanks Lemma \ref{lemma:approximationderivatives} and Remark \ref{remark:c1Pn} $P_t^{n}\varphi$  belongs to $C^1(H)$,  for $t \in (0,T],$ and, by the fundamental theorem of calculus, we obtain 
	\[P^n_t\varphi(x)-P^n_t \varphi(y)=\int_0^1{D_{\pi_n(x-y)}P^n_t\varphi(x+\tau (y-x))}\mathd \tau.\]
	Thus, applying Lemma \ref{lemma:approximationderivatives}, and by the convexity of the square, we get, for $x, y \in H,$
	\begin{equation}
		|P^n_t\varphi(x)-P^n_t\varphi(y)| \leqslant \frac{L_{T,\varepsilon}\exp\left(\varepsilon\|x\|_{L^2_{\xi}}^2+\varepsilon\|y\|_{L^2_{\xi}}^2\right)}{\sqrt{t}}\|\varphi\|_{L^{\infty}}\|x-y \|_{L^2_{\xi}}.
	\end{equation}
	Since $\varphi$ is bounded and continuous we can take the limit as $n \rightarrow \infty$. Indeed, since $X^{n,x_n}$ converges to $X^x$ in $C^0([0,T];H)$ (see Theorem \ref{theorem:existenceconvergence}), we have that $P^n_t\varphi$ converges to $P_t\varphi$ pointwise, and thus inequality \eqref{eq:LipP} holds for any $\varphi \in C_b(H)$.\\

	We now proceed like in Lemma 7.1.5 of \cite{DPZ96}.  If we consider the (signed) measure $\lambda_{x,y}(\mathd z)$ on $H$ defined as 
	\[\int_{H}\varphi(z) \lambda_{x,y}(\mathd z)=\mathbb{E}[\varphi(X^x_t)-\varphi(X^y_t)]=P_t\varphi(x)-P_t\varphi(y),\]
	for any bounded measurable function $\varphi$, by Hahn decomposition theorem   we have 
	\begin{multline*}
		|\lambda_{x,y}|(H)=\sup_{\varphi \in C_b(H), \;\| \varphi\|_{L^{\infty}} \le 1 } \Big|\int_{H}\varphi(z) \lambda_{x,y}(\mathd z) \Big|\\
		\leqslant\frac{L_{T,\varepsilon}\exp\big(\varepsilon\|x\|_{L^2_{\xi}}^2+\varepsilon\|y\|_{L^2_{\xi}}^2\big)}{\sqrt{t}}\|x-y \|_{L^2_{\xi}},
	\end{multline*}
	and thus, for any $\varphi \in {B}_b(H)$,
	\[|P_t\varphi(x)-P_t\varphi(y)|\leqslant\|\varphi\|_{L^{\infty}}|\lambda_{x,y}|(H) \leqslant \frac{\|\varphi\|_{L^{\infty}}L_{T,\varepsilon}\exp\big(\varepsilon\|x\|_{ L^2_{\xi}}^2+\varepsilon\|y\|_{L^2_{\xi}}^2\big)}{\sqrt{t}}\|x-y \|_{L^2_{\xi}}.\]
\end{proof}

\begin{remark}\label{remark:continuityphi} By a standard argument based on the semigroup law and Theorem \ref{theorem:Lip}, writing $P_t^n = P_{t/2}^n P_{t/2}^n$,  it follows that Lemma \ref{lemma:approximationderivatives} holds even for $\varphi \in B_b(H).$ \qed
\end{remark}


\subsection{Related results based on the    exponential estimates}	
 
In this section we  briefly discuss other possible applications of our exponential
 estimates. 
 Differently from the previous applications, we provide here 
only a brief sketch of the proofs.
	
\subsubsection{Solutions to infinite dimensional Hamilton-Jacobi-Bellman equation associated with the Burgers SPDE}\label{remark:HJB} 

First we note that the Lipschitz improvement property of Theorem \ref{theorem:Lip} can be generalized by proving a bound on the derivative $DP_t \varphi$.
	
\begin{remark}\label{remark:C1DP}
 Let $\varphi \in C_b(H)$.	Using Remark \ref{remark:continuityeta} (together with Remark \ref{remark:continuityphi}) one can prove that 
 for any $t \in (0,T]$, $x,h \in H$, there exists the directional derivative  $D_{h}(P_t\varphi)(x)$ of $P_t\varphi$ in $x$ along the direction $h$. Moreover, 
the Bismut-Elworthy-Li formula \eqref{eq:Bismut-Elwrothy-Li} pass to the limit as $n \rightarrow +\infty$ and thus we have 
 \begin{equation}\label{eq:Bismut-Elwrothy-Li2}
 	D_{h}(P_t\varphi)(x)=\frac{1}{t}\mathbb{E}\left[\varphi(X^{x}_t)\int_0^t {(-A)^{-\gamma}\left(\eta_s^{x,h}\right) (\mathd W_s)} \right],\;\; t\in(0,T].
 \end{equation}
 A similar idea is also used  in Proposition 4.1 of \cite{DPDlincei1998}. 
 Exploiting Remark \ref{remark:continuityphi}, and by the  local Lipschitzianity of $\eta^{x,h}$ with respect to $x\in L^2_{\xi}$ discussed in Remark \ref{remark:continuityeta}, it is not difficult to prove that $D_{h}P_t\varphi(x)$ is continuous with respect to $x\in L^2_{\xi}$ (uniformly in $h \in H$ with $\|h\|\le 1$), and thus $P_t\varphi \in C^1(H)$. Finally inequality \eqref{eq:LipP} implies
 \begin{equation}\label{eq:boundDP} 
 	 \|DP_t\varphi (x)\|_{L^2_\xi} \leqslant {L_{T,\varepsilon}\exp\left(2\varepsilon\|x\|_{L^2_{\xi}}^2\right)}\, {{t}^{-1/2}}\, \|\varphi\|_{L^{\infty}},\;\;  x \in H, \, t \in (0,T]. \qed
 \end{equation}
\end{remark}

Thanks to Remark \ref{remark:C1DP} and the bound \eqref{eq:boundDP} we can generalize the result about the Hamilton-Jacobi-Bellman equation associated with the stochastic Burgers equation proved in \cite{DDsicon,DPDlincei1998} in the case of trace class noise. We can treat  
\begin{align*}
	\begin{aligned}
		&	\begin{aligned}
			\partial_t u(t,x) = &g(x) +
			\langle Ax+B(x), Du(t,x)\rangle -F_0(x, Du(t,x)) 
			+ \frac{1}{2} Tr(Q D^2 u(t,x)),
		\end{aligned} 
		\\&
		u(0,x) = \varphi(x),\quad  x \in H, \quad  t \in ]0,T],
	\end{aligned} 
\end{align*}
where $\varphi, g \in C_b(H)$, $B(x) = \frac{1}{2} \partial_{\xi} (x^2)$, $x = H = L^2(\Lambda)$.  In \cite{DPDlincei1998} they consider a trace class operator $Q$. 
Here we can treat  $Q = (-A)^{2 \gamma}$, $\gamma \in  [0,1/4)$. 

One assumes that $F_0: H \times H \to \R$ is continuous and bounded and verifies     
\[|F_0(x, v_1)-F_0(x, v_2)|\leqslant C e^{-\delta \|x \|_{L^2_\xi}^2} \;\; \|v_1-v_2 \|_{L^2_{\xi}}, \;\; x, v_1, v_2 \in H,
\]
for some $C,\delta>0$. We interpret  the infinite dimensional parabolic PDE in the mild formulation 
\begin{equation}\label{eq:HJB}
	u(t, \cdot)=P_t \varphi+\int_0^tP_{t-s}[F_0(\cdot ,Du(s, \cdot)) + g]\mathd s. 
\end{equation}
Equation \eqref{eq:HJB} can be solved by a fixed point argument in the Banach space $Z_{T, \delta}$ consisting of all continuous and bounded functions $u : [0,T] \times H \to \R$ such that for all $t \in (0,T]$, we have $u(t, \cdot ) \in C^1(H)$ and moreover 
 \[
 \|u\|_{Z_{T,\delta}}:=\sup_{t\in [0,T]}\|u(t, \cdot)\|_{C_b(H)}+ \sup_{t\in(0,T]}\left(t^{1/2}\sup_{x\in H}\left(e^{-\delta \|x\|^2}\|Du(t,x)\|_{L^2_{\xi}}\right) \right)< \infty.
 \] 
Following the proof of Theorem 5.1 in \cite{DPDlincei1998} and using inequality \eqref{eq:boundDP} of Remark \ref{remark:C1DP}, it is possible to prove that \eqref{eq:HJB} admits a unique solution. 

This result can be applied, using an usual argument of Dynamic Programming Principle to study 
an optimal control problem as in  formulae (1.5)-(1.8) in \cite{DPDlincei1998}   (see also \cite{DDsicon}).

\subsubsection{An application to an SPDE with a singular drift}\label{girsanov}

   We introduce  the following  SPDE which includes \eqref{eq:Burgers}  as a special case when $\gamma =0$ (the white 
noise case)
  \begin{equation}
  \label{cheef}
X_t = e^{t A }x +\int_{0}^{t} e^{(  t-s)  A}B_0  (
X_{s})ds
 + \frac{1}{2}\int_{0}^{t} e^{(  t-s)  A}  \partial_{\xi} (
X_{s}^2)ds 
+  
\int_{0}^{t}e^{(  t-s)  A}d W_{s}, 
 \end{equation}
$x \in H = L^2(\Lambda)$, $t \in [0,T]$ (we write $X_t = X_t^{x}$). For the sake of simplicity here we only consider the case $\gamma =0$. We assume that  
 $B_0: H \to H $ is Borel and verifies   $\|B_0(x) \| \le c_0 +  \delta \, \|x \|,$ $x \in H, $ for some $c_0 \ge 0$, and $\delta >0$ small enough such that $\delta^2 T \le \lambda_{\gamma}$  (the constant $\lambda_{\gamma} >0$ is given in Theorem \ref{theorem:exponentialbound}).  
 
 One can prove weak existence for \eqref{cheef}, for  any $x \in H$, using our exponential estimates. 
  Indeed one can use the Girsanov theorem (cf. the proof of   part (i) in Proposition 25 in \cite{P21} and see 
  Appendix in \cite{DFPR13}).  

Let us fix $x \in H.$ Let $V = (V_t)$ be the unique  mild solution to the Burgers equation \eqref{cheef}
 when $B_0=0$ such that  $V_0 =x$. This is defined on  a  filtered probability space 
 $( 
\Omega, {\mathcal F},$ $  
 ({\mathcal F}_{t}), \P )$  
  on which it is defined the
cylindrical Wiener process $W$ on $H$.
 Set  
\begin{gather*} 
b(s) = B_0(V_s),\;\; s \in [0,T],
\end{gather*}
and note that $\|b(s)\| \le c_0  +   \delta \, \|V_s\|$, $s \in [0,T]$.  The process   $(b(s))_{s \in [0,T]}$ is  progressively measurable  and clearly verifies $\E \int_0^T
\|b(s)\|^2    ds < \infty $. Moreover, we know by Theorem \ref{theorem:exponentialbound}   that 
\begin{gather} \label{novikov}
\E  \big [ e^{\frac{1}{2} \int_0^T  \|b(s)  \|^2 ds}\big]  \le \, C_T \, 
 \E  \big [ e^{\delta^2  \int_0^T  \| V_s \|^2 ds}\big] < \infty.
\end{gather}  
Let ${U_t = \sum_{k \ge 1} \int_0^t \langle b(s) ,  e_k\rangle_{H}  \, dW^{(k)}_s}$, $t  \in [0,T]$. By Proposition 17 in \cite{DFPR13} we know that $\tilde W^{(k)}_t = W^{(k)}_t - \int_0^t \langle  e_k, b(s)\rangle_{H} ds$, $t \in [0,T]$, $k \ge 1,$ are independent real Wiener processes on $(
\Omega, {\mathcal F},$ $
 ({\mathcal F}_{t}), \tilde \P )$.   
  Here 
  the probability measure   
 $
 {\tilde \P = e^{ U_T \,   
 - \, \frac{1}{2} \int_0^T  \|b(s)  \|_{ }^2 ds} \, \cdot  \P}
 $ is equivalent to $\P$ (the quadratic variation process $\langle U\rangle_t$ $= \int_0^t \big \|b(s) \big \|^2 ds$, $t \in [0,T]$). 
  
 Hence  $\tilde W_t = \sum_{k \ge 1}  W^{(k)}_t  e_k$, $t \in [0,T]$, is a cylindrical Wiener process on $H$ defined on $(
\Omega, {\mathcal F},$ $  
 ({\mathcal F}_{t}), \tilde \P )$. As in Proposition 21 of \cite{DFPR13} we obtain that 
\begin{gather*}  
 \begin{array}{l} 
V_t = e^{t A }x +\int_{0}^{t} e^{(  t-s)  A}B_0  (
V_{s})ds
 + \frac{1}{2}\int_{0}^{t} e^{(  t-s)  A} \partial_{\xi}(
V_{s}^2)ds 
+
\int_{0}^{t}e^{(  t-s)  A}d \tilde W_{s}, \;\; t \in [0,T],
 \end{array}
\end{gather*}
 $\tilde \P$-a.s.. 
 Thus $V$ is a mild solution on $[0,T]$ to \eqref{cheef} defined on $(
\Omega, {\mathcal F},$ $
 ({\mathcal F}_{t}), \tilde \P )$.

\section{Proof of Proposition \ref{prop:K}}\label{prop} 

 Thanks to \eqref{s33} it is enough to consider  $\kappa=0.$
We revisit Proposition 2.1 in \cite{DPD} for our process $z_{\alpha}$.  
 Recall
\begin{align*}
z_{\alpha}(t) &= \int_0^t e^{(t-s)(A - \alpha)} (-A)^{\gamma} \, dW(s), \quad t \geq 0, \, \alpha \geq 0, \\
Y(\sigma) &= \int_0^{\sigma} (\sigma - s)^{- \beta} e^{(\sigma - s) A} (-A)^{\gamma} \, dW(s), \quad \sigma \geq 0, \, \beta \in (0, 1).
\end{align*}
We perform the factorization method (\cite{DPZ14}, Section 5.3.1) and the same proof  as in \cite{DPD}. Note, though, that in their proof, they introduce a parameter $\gamma \in (0, 1)$. To avoid the collision of notation, we write $\eta \in (0, 1)$ instead of their $\gamma$ and we write $\gamma$ that plays the role of ``our'' $\gamma \in [0, 1/4)$ in $(-A)^{\gamma}$.

We also achieve their equation (2.5)
$$
\| z_{\alpha}(t) \|_{L^p(\Lambda)} \leq c(\eta, \beta, m) \left( \alpha^{- \eta} + \alpha^{- \beta + \frac{1}{2m}} \right) \left( \int_0^t e^{- \lambda_p (t - \sigma)} \| Y(\sigma) \|_{L^p(\Lambda)}^{2m} \, d\sigma \right)^{\frac{1}{2m}},
$$
provided that conditions (2.3) and (2.4) hold:
\begin{equation*}
\beta > \eta + \frac{1}{2m}, \quad \beta > \frac{1}{2m}, \tag{*}
\end{equation*}
which we also must fulfill.
Regarding the moments of $Y$, we have
$$
Y(\sigma, \xi) = \sum_{k=1}^{\infty} \int_0^{\sigma} (\sigma - s)^{- \beta} e^{- \alpha_k (\sigma - s)} \alpha_k^{\gamma} e_k(\xi) \, d\beta_k(s) \;\; \text{and so}
$$
$$
\E |Y(\sigma, \xi)|^2 = \sum_{k=1}^{\infty} \int_0^{\sigma} s^{- 2 \beta} e^{- 2 \alpha_k s} \alpha_k^{2 \gamma} e_k^2(\xi) \, ds \lesssim \sum_{k=1}^{\infty} \frac{1}{\alpha_k^{1 - 2 \beta - 2 \gamma}}.
$$
The last sum is finite if and only if
\begin{equation*}
\beta < \frac{1}{4} - \gamma. \tag{**}
\end{equation*}
By the Gaussianity of $Y$, it follows that
\begin{equation} \label{eq:Y moments}
\E \| Y(\sigma) \|_{L^p(\Lambda)}^k \leq c(p, k),
\end{equation}
for all $p, k \geq 1$ and all $\sigma \geq 0$.
We continue in the proof as in \cite{DPD} and we obtain
$$
\| z_{\alpha}(t) \|_{L^p(\Lambda)} \leq c(\eta, \beta, m) \left( \alpha^{- \eta} + \alpha^{- \beta + \frac{1}{2m}} \right) \left( \int_0^{\infty} (1 + \sigma^2)^{-1} \| Y(\sigma) \|_{L^p(\Lambda)}^{4m} \, d\sigma \right)^{\frac{1}{4m}}.
$$
To prove the result, we need to satisfy conditions (*) and (**). Therefore, we choose $\eta, \beta, m$ such that
$$
\frac{1}{2m} < \min \{ \delta, \eps \}, \quad \eta := \frac{1}{4} - \eps - \gamma, \quad \frac{1}{4} - \gamma > \beta > \frac{1}{4} - \eps - \gamma + \frac{1}{2m}.
$$
This choice gives (*) and (**), but we also must satisfy that $\eta \in (0, 1)$ and $\beta \in (0, 1)$. That poses a condition
$$
\eps < \frac{1}{4} - \gamma,
$$
which we must assume. The final result
$$
\| z_{\alpha}(t) \|_{L^p(\Lambda)} \leq \alpha^{- \frac{1}{4} + \eps + \gamma} \left(1 + t^{\delta} \right) K_{\eps, \delta, p},
$$
holds with the random variable
\begin{equation}\label{eq:Kbalbla}
K_{\eps, \delta, p} = c(\eps, \delta) \left( \int_0^{\infty} (1 + \sigma^2)^{-1} \| Y(\sigma) \|_{L^p(\Lambda)}^{4m} \, d\sigma \right)^{\frac{1}{4m}}.
\end{equation}
The fact that
$$
\E K_{\eps, \delta, p}^k < \infty,
$$
for all $k \geq 1$, comes from \eqref{eq:Y moments}. The proof is complete. \qed

\appendix 
  
\section{}\label{section:appendix}

\subsection{The existence of the invariant measure}

The proof of the existence of the invariant measure $\mu_{\gamma}$ for the solution to \eqref{eq:Burgers} for $\gamma \in [0, 1/4)$ completely  follows Section 4 in \cite{DPDT}. Here we use the same  notation of \cite{DPDT}. 

Basically, wherever the authors in \cite{DPDT} work with the space $H^{1/4}$, we work with the space $H^{1/4 - \gamma}$, because our driving process $(-A)^{\gamma} \, W(t)$ is rougher. 


First, we consider the double-sided cylindrical process $W(t)$ on $\R$ by setting (extending)
$$
W(t) = V(-t), \, \quad t \leq 0,
$$
where $(V(t), \, t \geq 0)$ is another cylindrical $H$-valued Wiener process independent of $(W(t), \, t \geq 0)$.
For each $\lambda \geq 0$ we consider the solution $u_{\lambda}=(u_{\lambda}(t), t \geq - \lambda)$ to 
\begin{equation}
\begin{cases}
du_{\lambda}(t) &= \left( A u_{\lambda}(t) + \frac{1}{2} \partial_{\xi} \left( u_{\lambda}^2(t) \right) \right) dt + (-A)^{\gamma} \, dW(t), \\
u_{\lambda}(- \lambda) &= 0.
\end{cases}
\end{equation}
We introduce a modified stochastic convolution. For any $\alpha > 0$ we define
$$
W_A^{\alpha}(t) = \int_{- \infty}^t e^{(t-s) (A - \alpha)} (-A)^{\gamma} \, dW(s).
$$
$W_A^{\alpha}$ is the mild solution to the linear equation
\begin{equation}
\begin{cases}
dz(t) &= (Az(t) - \alpha z(t)) dt + (-A)^{\gamma} \, dW(t), \\
z(0) &= z_0,
\end{cases}
\end{equation}
where
$
z_0 = \int_{- \infty}^0 e^{-s (A - \alpha)} (-A)^{\gamma} \, dW(s).
$
It is possible 
to check that $(W_A^{\alpha}(t), t \in \R)$ is a stationary process. (It is Gaussian with zero mean and $\Tr Q_t = \sum_{k=1}^{\infty} \frac{(\pi k)^{4 \gamma}}{2(\pi^2 k^2 + \alpha)}$ independent of $t$; see also Lemma \ref{lemma:LDPDT} below.)
Define
$$
v_{\lambda}^{\alpha}(t) = u_{\lambda}(t) - W_A^{\alpha}(t), \quad t \geq - \lambda.
$$
Then $(v_{\lambda}^{\alpha}(t), t \geq - \lambda)$ is the mild solution to the semilinear equation
\begin{equation} \label{eq:vlambda}
\begin{cases}
\frac{d}{dt} v_{\lambda}^{\alpha}(t) &= \left( A v_{\lambda}^{\alpha}(t) + \frac{1}{2} \partial_{\xi} \left( v_{\lambda}^{\alpha}(t) + W_A^{\alpha}(t) \right)^2 \right) + \alpha W_A^{\alpha}(t), \\
v_{\lambda}^{\alpha}(- \lambda) &= - W_A^{\alpha}(- \lambda).
\end{cases}
\end{equation}
Now we modify Lemma 4.1 and Theorem 4.1 from \cite{DPDT}  because of our driving process $(-A)^{\gamma} \, W(t)$.

\begin{lemma} \label{lemma:LDPDT}
For any $\eps > 0$ and $\sigma \in [0, 1/4 - \gamma)$, there exists $\alpha$ (depending on $\eps$ and $\sigma$) such that
$$
\E \| (-A)^{\sigma} W_A^{\alpha}(t) \|^2 < \eps,\;\; \text{for all $t \in \R$.}
$$
\end{lemma}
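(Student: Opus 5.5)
The proof is a direct computation in the eigenbasis $(e_k)$ of $A$, using stationarity and Gaussianity of $W_A^{\alpha}$. Writing $(-A)^{\gamma}W(s)=\sum_k \alpha_k^{\gamma}\beta_k(s)e_k$ for the two-sided independent Brownian motions $\beta_k$ built from $W$ and $V$, we get
$$
(-A)^{\sigma}W_A^{\alpha}(t)=\sum_{k=1}^{\infty}\alpha_k^{\sigma+\gamma}\int_{-\infty}^{t}e^{-(t-s)(\alpha_k+\alpha)}\,d\beta_k(s)\,e_k .
$$
By independence of the $\beta_k$ and the It\^o isometry (applied separately on $(-\infty,0]$ and $[0,t]$, or directly on the two-sided Brownian motion), the expected squared $H$-norm is
$$
\E\|(-A)^{\sigma}W_A^{\alpha}(t)\|^2=\sum_{k=1}^{\infty}\alpha_k^{2\sigma+2\gamma}\int_0^{\infty}e^{-2u(\alpha_k+\alpha)}\,du=\sum_{k=1}^{\infty}\frac{(\pi^2k^2)^{2\sigma+2\gamma}}{2(\pi^2k^2+\alpha)}.
$$
This is independent of $t$, which is consistent with the stationarity remark preceding the lemma.

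It remains to show this series tends to $0$ as $\alpha\to\infty$. Each term is bounded by $\frac12(\pi^2k^2)^{2\sigma+2\gamma-1}$, which is of order $k^{-2(1-2\sigma-2\gamma)}$. This is summable exactly when $2(1-2\sigma-2\gamma)>1$, that is, when $\sigma+\gamma<1/4$. That is precisely the hypothesis $\sigma\in[0,1/4-\gamma)$. Each term also tends to $0$ as $\alpha\to\infty$, so dominated convergence for series shows the sum tends to $0$.

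Hence for any $\eps>0$ there is $\alpha=\alpha(\eps,\sigma)$ with the sum less than $\eps$, uniformly in $t\in\R$. For a quantitative version one can split at $k\approx\sqrt\alpha$, which gives a bound of order $\alpha^{-(1/4-\sigma-\gamma)}$.

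The only point needing care is the well-definedness of the two-sided stochastic integral. It is handled by the same convergent series, since the computation above shows the integral converges in $L^2(\Omega;H)$ under $\sigma+\gamma<1/4$. I do not expect a real obstacle here.
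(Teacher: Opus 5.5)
Your proposal is correct and follows the paper's proof. You use the same eigen-expansion to get $\E\|(-A)^{\sigma}W_A^{\alpha}(t)\|^2=\sum_{k}\frac{(\pi k)^{4(\sigma+\gamma)}}{2(\pi^2k^2+\alpha)}$, and your dominated-convergence step is a compact form of the paper's argument, which first chooses $N$ with $\sum_{k>N}(\pi k)^{4(\sigma+\gamma)-2}<\eps/2$ and then takes $\alpha$ large so that the first $N$ terms are below $\eps/2$ (minor aside: the split at $k\approx\sqrt{\alpha}$ actually gives the sharper rate $\alpha^{-2(1/4-\sigma-\gamma)}$ for the squared norm, so the exponent you quote is the one for the norm itself, though it is still a valid upper bound for $\alpha\ge 1$).
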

\begin{proof}
We have
\begin{align*}
\E \| (-A)^{\sigma} W_A^{\alpha}(t) \|^2 &= \sum_{k=1}^{\infty} \int_{- \infty}^t (\pi k)^{4 \sigma} (\pi k)^{4 \gamma} e^{-2 (\pi^2 k^2 + \alpha)(t-s)} \, ds \\
&= \sum_{k=1}^{\infty} \frac{(\pi k)^{4 (\sigma + \gamma)}}{2(\pi^2 k^2 + \alpha)} < \infty,
\end{align*}
because $\gamma \in [0, 1/4)$ and $\sigma \in [0, 1/4 - \gamma)$.
Let $N$ be such that
$$
\sum_{k=N+1}^{\infty} (\pi k)^{4(\sigma + \gamma) - 2} < \frac{\eps}{2}.
$$
Now the assertion follows by choosing 
$\alpha$ 
such that
$
\sum_{k=1}^N \frac{(\pi k)^{4 (\sigma + \gamma)}}{2(\pi^2 k^2 + \alpha)} < \frac{\eps}{2}.
$
\end{proof}

\begin{theorem}
There exists an invariant measure $\mu_{\gamma}$ for the stochastic Burgers equation \eqref{eq:Burgers}.
\end{theorem}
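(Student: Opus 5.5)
The plan is to follow the Krylov–Bogoliubov scheme of Section 4 in \cite{DPDT}, with the shifted convolution $W_A^{\alpha}$ and Lemma \ref{lemma:LDPDT} replacing the corresponding objects there. The core of the argument is a bound on $\E\|v_\lambda^\alpha(0)\|^2$, and hence on $\E\|u_\lambda(0)\|^2$, that is uniform in $\lambda\ge 0$. By stationarity of the noise, $u_\lambda(0)$ has the same law as $X^0_\lambda$, the solution of \eqref{eq:Burgers} started from $0$ and evaluated at time $\lambda$. A uniform second-moment bound therefore gives, for instance, tightness of the time averages $\frac1T\int_0^T\Law(X^0_t)\,dt$ in a space compactly embedded in $H$. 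The Feller property of $(P_t)$, which follows from the continuous dependence on $x$ in Remark \ref{remark:continuityeta}, then shows that any weak limit is invariant.

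To obtain the energy estimate, I take the $L^2$ inner product of \eqref{eq:vlambda} with $v^\alpha_\lambda$, working either on Galerkin approximations or directly, since $v$ is regular enough. The cubic term $\int v^2\partial_\xi v$ vanishes. The remaining terms are handled exactly as in Step 1 of Theorem \ref{theorem:apriorigamma}, now with $u=0$:
\[
\frac12\frac{d}{dt}\|v\|^2+\delta\|\nabla v\|^2\le c\|W^\alpha_A(t)\|_{L^4}^{8/3}\|v\|^2+c\big(\|W_A^\alpha(t)\|_{L^4}^4+\alpha^2\|W^\alpha_A(t)\|^2\big).
\]
The dangerous term is the first one on the right-hand side. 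As in \cite{DPDT}, I control it in expectation via the Poincaré inequality and Gronwall's lemma:
\[
\|v(0)\|^2\le e^{-2C_P\delta\lambda+2c\int_{-\lambda}^0\|W_A^\alpha\|_{L^4}^{8/3}}\|W^\alpha_A(-\lambda)\|^2+\int_{-\lambda}^0 e^{\int_s^0(-2C_P\delta+2c\|W_A^\alpha\|_{L^4}^{8/3})}g(s)\,ds,
\]
where $g$ collects the forcing terms. The Sobolev embedding $H^{\sigma}\subset L^4$ holds for $\sigma$ close to $1/4-\gamma$ only after using an $L^4$-version of the moment estimate. Gaussianity together with Lemma \ref{lemma:LDPDT}, applied with $L^4$ moments in place of the $H^\sigma$ moment, gives $\E\|W_A^\alpha(t)\|_{L^4}^{8/3}\le\eps$ uniformly in $t$, once $\alpha$ is chosen large.

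The main obstacle is to bound the exponential moment $\E\exp\big(2c\int_s^0\|W^\alpha_A\|_{L^4}^{8/3}\big)$ by $e^{C_P\delta|s|}$. For this I intend to follow \cite{DPDT}. Since $W_A^\alpha$ is stationary and its correlations decay exponentially (at rate $\ge\alpha+\pi^2$), I split $[s,0]$ into unit blocks and use Fernique's theorem together with a Gaussian concentration (or ergodic/Jensen) argument to get $\E e^{2c\int_s^0\|W_A^\alpha\|_{L^4}^{8/3}}\le C e^{\eta|s|}$, with $\eta\to0$ as $\alpha\to\infty$. Alternatively, the Bou\'e--Dupuis machinery of Section \ref{section:exponentialbound} could replace this step, via a cruder bound through Theorem \ref{theorem:exponentialbound}: the estimate \eqref{eq:expXt} with $x=0$ gives $\E\|X^0_t\|^2\le C_{2,\gamma,T}/\lambda'$, but only on $[0,T]$. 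Uniformity in $t$ then follows from the contraction $e^{-\delta_1 t}$ together with the Markov property, by iterating over intervals of length $T$: $\E e^{\lambda'\|X_{(k+1)T}\|^2}\le e^{C}\,\E e^{\lambda' e^{-\delta_1T}\|X_{kT}\|^2}$, and Jensen closes a recursion bounded uniformly in $k$. I would in fact use this second route, since it relies entirely on results already proved. After that, I apply Cauchy–Schwarz with the forcing terms, which have uniformly bounded moments, and conclude that $\sup_\lambda\E\|u_\lambda(0)\|^2<\infty$.

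Finally, tightness needs compactness. Integrating the energy inequality gives $\frac1T\E\int_0^T\|X^0_t\|_{H^{\sigma}}^2\,dt\le C$ for some $\sigma>0$: one combines $\E\int\|\nabla Y\|^2$ with the bound $\E\|(-A)^{\sigma/2}W_A^\alpha\|^2<\infty$ from Lemma \ref{lemma:LDPDT}. Since $H^\sigma\subset\subset H$, Chebyshev's inequality yields tightness of the averaged laws. Krylov–Bogoliubov together with the Feller property then produces the invariant measure $\mu_\gamma$.
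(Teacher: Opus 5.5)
Your proposal is correct, but the route you commit to (the second one) differs from the paper's.

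\textbf{The paper's route.} The argument is entirely pathwise and never needs the exponential moment you flag as the main obstacle. It also does not use the block/Fernique argument you attribute to \cite{DPDT}.
\begin{enumerate}
\item A deterministic $\alpha$ is fixed so that $\E\|W_A^\alpha(0)\|_{L^4}^{8/3}$ is small, using H\"older between $L^\infty$ and $L^2$ together with Lemma \ref{lemma:LDPDT}.
\item Birkhoff's ergodic theorem for the stationary process $W_A^\alpha$ then bounds the Gronwall factor in \eqref{eq:vlambda0} by $e^{-\pi^2\lambda/2}$ for $\lambda\ge\lambda_0(\omega)$.
\item The polynomial growth of $W_A^\alpha$ as $s\to-\infty$ gives $\sup_\lambda\|v^\alpha_\lambda(0)\|\le R_1(\omega)$ almost surely.
\item The mild formulation with heat-kernel smoothing upgrades this to an almost sure bound in $H^{1/4-\gamma}$. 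Hence the laws of $u_\lambda(0)$, i.e.\ of $X^0_\lambda$, are tight directly.
\end{enumerate}

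\textbf{Your route.} You import Theorem \ref{theorem:exponentialbound}. From \eqref{eq:expXt}, the Markov property and Jensen, you get the recursion $a_{k+1}\le e^{C}a_k^{e^{-\delta_1 T}}$ for $a_k=\E e^{\lambda'\|X^0_{kT}\|^2}$. This yields $\sup_{t\ge0}\E e^{\lambda'\|X^0_t\|^2}<\infty$. The expected energy inequality then gives a time-averaged $H^\sigma$ bound, and Krylov--Bogoliubov with the Feller property concludes. This works, but two points should be made explicit.
\begin{itemize}
\item The term $\E\int_0^T\|W_A^\alpha\|_{L^4}^{8/3}\|Y_t\|^2\,dt$ is controlled by Cauchy--Schwarz and $\sup_t\E\|Y_t\|^4<\infty$. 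Your exponential bound supplies the latter.
\item The recursion needs the coefficient of $\|x\|^2$ in \eqref{eq:expXt} to be strictly below $1$ at time $T$. The step $\|X\|^2\le\|Y\|^2+\|\bar z^n_\alpha\|^2$ in the proof of Theorem \ref{theorem:apriorigamma} really only gives $(1+\eta)\|Y\|^2+C_\eta\|\bar z^n_\alpha\|^2$. So you should choose $\eta$ with $(1+\eta)e^{-\delta_1T}<1$.
\end{itemize}

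\textbf{What each buys.} The paper's argument is elementary, independent of the Bou\'e--Dupuis and Galerkin machinery, and gives almost sure bounds. Yours depends on Proposition \ref{prop:K} and Theorems \ref{theorem:existenceconvergence}--\ref{theorem:exponentialbound}. In exchange it delivers uniform-in-time exponential moments, and hence the integrability of Theorem \ref{theorem:expBurgersinvariant} for the constructed measure at no extra cost. It also makes your pullback set-up with $u_\lambda$ and $v^\alpha_\lambda$ unnecessary.
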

\begin{proof}
Similarly to \cite{DPDT}, we will show that $\{ u_{\lambda}(0) \}_{\lambda \geq 0}$ is bounded in probability in $H^{1/4 - \gamma}$. Since the embedding $H^{1/4 - \gamma} \subset H$ is compact (that is by Rellich-Kondrachov theorem), the family of $\{ \Law \left( u_{\lambda}(0) \right) \}_{\lambda \geq 0}$ is tight, so there will exist an invariant measure (cf. Chapter 11 in \cite{DPZ14}).

Since $(W_A^{\alpha}(t), t \in \R)$ is bounded in probability in $H^{1/4 - \gamma}$, it is sufficient to show that $\{ v_{\lambda}^{\alpha}(0) \}_{\lambda \geq 0}$ is bounded in probability in $H^{1/4 - \gamma}$ for some $\alpha>0$. 

We first choose $\alpha$ such that, for all $t \in \R$
\begin{equation} \label{eq:WAalpha}
\E \| W_A^{\alpha}(t) \|_{L^4(\Lambda)}^{8/3} \leq \frac{\pi^2}{8C},
\end{equation}
where the constant $C > 0$ will be defined (or rather will appear) later. This is possible, because H\"older inequality gives
$$
\E \| W_A^{\alpha}(t) \|_{L^4(\Lambda)}^{8/3} \leq \left( \E \| W_A^{\alpha}(t) \|_{L^{\infty}(\Lambda)}^4 \right)^{1/3} \left( \E \| W_A^{\alpha}(t) \|^2 \right)^{2/3}
$$
and the first term on the right-hand side is bounded and the second term can be made arbitrarily small by Lemma \ref{lemma:LDPDT} above. 

We multiply \eqref{eq:vlambda} by $v_{\lambda}^{\alpha}(t)$ and integrate over $\Lambda$. We make estimates as in \cite{DPDT}, using also the Poincare inequality, so we eventually arrive at
$$
\frac{d}{dt} \| v_{\lambda}^{\alpha}(t) \|^2 + \pi^2 \| v_{\lambda}^{\alpha}(t) \|^2 \leq 2C \| W_A^{\alpha}(t) \|_{L^4(\Lambda)}^{8/3} \| v_{\lambda}^{\alpha}(t) \|^2 + \| W_A^{\alpha}(t) \|_{L^4(\Lambda)}^4 + \frac{2 \alpha^2}{\pi^2} \| W_A^{\alpha}(t) |^2. 
$$
By the Gronwall lemma, it follows
\begin{align}
\| v_{\lambda}^{\alpha}(0)|^2 &\leq e^{- \pi^2 \lambda + 2C \int_{-\lambda}^0 \| W_A^{\alpha}(s) \|_{L^4(\Lambda)}^{8/3} \, ds} \| W_A^{\alpha}(-\lambda) \|^2 \notag \\
&\phantom{=}+ \int_{-\lambda}^0 \left( \| W_A^{\alpha}(t) \|_{L^4(\Lambda)}^4 + \frac{2 \alpha^2}{\pi^2} \| W_A^{\alpha}(t) \|^2 \right) e^{\pi^2 s + 2C \int_s^0 \| W_A^{\alpha}(\sigma) \|_{L^4(\Lambda)}^{8/3} \, d\sigma} \, ds. \label{eq:vlambda0}
\end{align}
By stationarity of the process $(W_A^{\alpha}(t), t \in \R)$ and the ergodic theorem, we obtain as $\lambda \rightarrow \infty$
$$
\frac{1}{\lambda} \int_{-\lambda}^0 \| W_A^{\alpha}(s) \|_{L^4(\Lambda)}^{8/3} \, ds \rightarrow \E \| W_A^{\alpha}(0) \|_{L^4(\Lambda)}^{8/3}, \quad \P-a.s.
$$
From \eqref{eq:WAalpha}, we deduce that there exists a random variable $\lambda_0(\omega)$ such that
$$
e^{- \pi^2 \lambda + 2C \int_{-\lambda}^0 \| W_A^{\alpha}(s) \|_{L^4(\Lambda)}^{8/3} \, ds} \leq e^{- \frac{\pi^2}{2} \lambda}, \quad \text{for } \lambda \geq \lambda_0, \quad \P-a.s.
$$
Since $\| W_A^{\alpha}(s) \|^2$ and $\| W_A^{\alpha}(s) \|_{L^4(\Lambda)}^4$ have at most polynomial growth when $s \rightarrow -\infty$, for almost every $\omega \in \Omega$, the right-hand side of \eqref{eq:vlambda0} is bounded almost surely, i.e., there exists a random variable $R_1(\omega)$ such that
$$
\| v_{\lambda}^{\alpha}(0) \| \leq R_1(\omega), \quad \P-a.s.
$$
That proves that $\{ v_{\lambda}^{\alpha}(0) \}_{\lambda \geq 0}$ is bounded almost surely (and in probability) in $H$. 

Let us write $v_{\lambda}^{\alpha}(0)$ using the formulation of mild solution to \eqref{eq:vlambda}
$$
v_{\lambda}^{\alpha}(0) = - e^{\lambda A} W_A^{\alpha}(- \lambda) + \frac{1}{2} \int_{-\lambda}^0 e^{-sA} \partial_{\xi} (v_{\lambda}^{\alpha}(s) + W_A^{\alpha}(s))^2 \, ds + \alpha \int_{-\lambda}^0 e^{-sA} W_A^{\alpha}(s) \, ds.
$$
We deduce
\begin{align*}
\| (-A)^{1/8 - \gamma/2} v_{\lambda}^{\alpha}(0) \| &\leq \| e^{\lambda A} (-A)^{1/8 - \gamma/2} W_A^{\alpha}(- \lambda) \| \\
&\phantom{=}+ \frac{1}{2} \int_{-\lambda}^0 \left\| (-A)^{1/8 - \gamma/2} e^{-sA} \partial_{\xi} (v_{\lambda}^{\alpha}(s) + W_A^{\alpha}(s))^2 \right\| \, ds \\
&\phantom{=}+ \alpha \int_{-\lambda}^0 \left\|(-A)^{1/8 - \gamma/2} e^{-sA} W_A^{\alpha}(s) \right\| \, ds.
\end{align*}
We have upper bounds
\begin{align*}
&\left\|(-A)^{1/8 - \gamma/2} e^{-sA} \partial_{\xi} (v_{\lambda}^{\alpha}(s) + W_A^{\alpha}(s))^2 \right\| \\
&\quad \leq C_1 \left( (-s)^{-7/8 + \gamma/2} + 1 \right) e^{\pi^2 s} \left\|v_{\lambda}^{\alpha}(s) + W_A^{\alpha}(s) \right\|^2, \\
&\left\|(-A)^{1/8 - \gamma/2} e^{-sA} W_A^{\alpha}(s) \right\| \leq e^{\pi^2 s} \left\|(-A)^{1/8 - \gamma/2} W_A^{\alpha}(s) \right\|,
\end{align*} 
for any $0 > s \geq - \lambda$. Note that, $\P$-a.s., for any $T>0,$ $W_A^{\alpha} \in C([0,T]; H^{1/2 - 2 \gamma - \eps})$ for any $\eps > 0$ (see Theorem 5.15 in \cite{DPZ14}),  so the process $\left\|(-A)^{1/8 - \gamma/2} W_A^{\alpha}(s) \right\|$ is well defined and has at most polynomial growth as $s \rightarrow - \infty$ for almost all $\omega \in \Omega$. Therefore, we deduce the existence of a~random variable $R_2(\omega)$ such that
$$
\| (-A)^{1/8 - \gamma/2} v_{\lambda}^{\alpha}(0) \| \leq R_2(\omega), \quad \P-a.s.  
$$ 
Hence $\{ v_{\lambda}^{\alpha}(0) \}_{\lambda \geq 0}$ (and therefore  $\{ u_{\lambda}(0) \}_{\lambda \geq 0}$) is bounded almost surely in $H^{1/4 - \gamma}$. Since almost sure boundedness implies boundedness in probability and the embedding  $H^{1/4 - \gamma} \subset H$ is compact, the family of $\{ \Law \left( u_{\lambda}(0) \right) \}_{\lambda \geq 0}$ is tight and there exists an invariant measure $\mu_{\gamma}$ for the stochastic Burgers equation \eqref{eq:Burgers}.
\end{proof}

\subsection{Irreducibility}

\begin{proposition}
The transition Markov semigroup $(P_t)$ corresponding to the solution $X$ of equation \eqref{eq:Burgers} is irreducible.
\end{proposition}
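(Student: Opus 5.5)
Irreducibility means that for every $t>0$, every $x\in H$ and every open ball $B(y,r)\subset H$ we have $P_t(x,B(y,r))=\P(\|X^x_t-y\|<r)>0$. We follow the standard control-theoretic route (as in Da Prato--Debussche, or Section 7.4 of Da Prato--Zabczyk). The plan is to find a deterministic control driving the skeleton equation close to $y$, and then show that noise trajectories staying near that control keep $X^x_t$ near $y$ with positive probability.

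\emph{Step 1 (deterministic controllability).} Fix $t>0$, $x,y\in H$, $r>0$. By density, replace $y$ by a smooth $\tilde y\in H^2\cap H^1_0$ with $\|y-\tilde y\|<r/4$. Construct a smooth path $\phi:[0,t]\to H^2$ as follows. First let $\phi(s)=e^{sA}x$ for $s\in[0,t/2]$, so that $\phi(t/2)\in H^2$ by analytic smoothing. Then interpolate linearly on $[t/2,t]$ from $\phi(t/2)$ to $\tilde y$. Define the control
\[
v(s):=\phi'(s)-A\phi(s)-\tfrac12\partial_\xi(\phi(s)^2).
\]
This is zero on $[0,t/2]$ and lies in $L^2(\Lambda)$ on $[t/2,t]$. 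Set $w(s):=\int_0^s e^{(s-\sigma)A}v(\sigma)\,d\sigma$. This is the mild form of a control acting through the identity; since the noise is $(-A)^\gamma dW$, we realise $w$ as $\int_0^s e^{(s-\sigma)A}(-A)^\gamma\, d h(\sigma)$ with $h(s)=\int_0^s(-A)^{-\gamma}v$, which is fine because $(-A)^{-\gamma}$ is bounded. Then $\phi$ solves the mild equation \eqref{sdd} with $z$ replaced by $w$, and $\phi(t)=\tilde y$.

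\emph{Step 2 (continuity of the solution map in the noise).} Write $X=Y+z$, where $Y$ solves the random PDE \eqref{eq:Yt1}. We show that the map $z\mapsto X_t$ is continuous at $w$ from $C([0,t];L^4(\Lambda))$ to $H$. Take the difference $D=Y^z-Y^w$ of the two $Y$-equations, multiply by $D$ and integrate. The nonlinear terms are handled exactly as in Step 1 of the proof of Theorem \ref{theorem:apriorigamma}, using the Hölder, Sobolev interpolation and Young inequalities. This gives
\[
\frac{d}{ds}\|D\|^2\le C\big(1+\|Y^w\|_{H^1}^{4/3}+\|z\|_{L^4}^{8/3}+\|w\|_{L^4}^{8/3}\big)\|D\|^2+C\|z-w\|_{L^4}^2\big(\|Y^w\|_{L^4}^2+\|z\|_{L^4}^2+\|w\|_{L^4}^2\big).
\]
Gronwall then yields $\|D(t)\|\to0$ as $\|z-w\|_{C_tL^4}\to0$. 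Here $Y^w=\phi-w$ is deterministic and smooth on $(0,t]$, with $L^2_tH^1$ bounds as in \eqref{ddt}. Choose $\rho>0$ so that $\|z-w\|_{C_tL^4}<\rho$ implies $\|X_t-\tilde y\|<r/2$.

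\emph{Step 3 (support of the stochastic convolution).} It remains to show $\P(\|z-w\|_{C([0,t];L^4)}<\rho)>0$. The process $z$ is a centred Gaussian random variable in the separable Banach space $C([0,t];L^4)$ by Remark \ref{sys}, so its topological support is the closure of its Cameron--Martin space. The Cameron--Martin space is $\{\int_0^\cdot e^{(\cdot-\sigma)A}(-A)^\gamma g(\sigma)d\sigma:\ g\in L^2(0,t;H)\}$, and it contains $w$ (take $g=(-A)^{-\gamma}v$). Hence the probability is positive, which gives $P_t(x,B(y,r))>0$.

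The main obstacle is Step 2. The difference estimate must be made uniform for $z$ merely continuous in $L^4$, and the Gronwall exponent requires the deterministic $L^2_tH^1$ bound on $Y^w$ near $s=0$ for a rough initial datum $x\in L^2$. For this I would rely on the standard energy estimate for $Y^w$ and avoid any higher regularity. A secondary check is that $v\in L^2(t/2,t;H)$, which holds because $\phi$ is smooth there.
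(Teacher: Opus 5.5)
The gap is in Step 1, and it carries over into Step 3. On $[0,t/2]$ you take $\phi(s)=e^{sA}x$. Then $\phi'=A\phi$, and your own formula gives
\[
v(s)=-\tfrac12\partial_\xi\big((e^{sA}x)^2\big)=-(e^{sA}x)\,\partial_\xi(e^{sA}x).
\]
This is not zero, because the heat flow is not a trajectory of the uncontrolled Burgers equation.

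For a general $x\in L^2(\Lambda)$ this control is not square integrable near $s=0$. One only has $\|v(s)\|\lesssim s^{-3/4}\|x\|^2$, and this is essentially attained. For example, take $x(\xi)=|\xi-\tfrac12|^{-\beta}$ with $\tfrac14+\gamma\le\beta<\tfrac12$. A scaling computation gives $\|(-A)^{-\gamma}v(s)\|^2\approx s^{-(1+4\beta-4\gamma)/2}$, so $g=(-A)^{-\gamma}v\notin L^2(0,t;H)$.

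The map $g\mapsto\int_0^\cdot e^{(\cdot-\sigma)A}(-A)^{\gamma}g(\sigma)\,d\sigma$ is injective. Hence your claim in Step 3 that $w$ lies in the Cameron--Martin space (``take $g=(-A)^{-\gamma}v$'') is false for such $x$. As written, you have not shown that $w$ lies in the support of $z$. This is exactly why the paper first reduces to an initial datum $a\in H^1$: there the compensating term $\partial_\xi(\tilde z^2)$ is bounded in $H$.

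Any one of the following repairs works:
\begin{enumerate}
\item Let $\phi$ follow the \emph{uncontrolled deterministic Burgers flow} on $[0,t/2]$. Then $v\equiv0$ there, as you intended. This $\phi$ is smooth at $t/2$, and its energy identity gives the $L^2_tH^1$ bound you need for $Y^w$ in Step 2.
\item Reduce to $x\in H^1$, as the paper does.
\item Keep the heat flow and show that $w$ lies in the \emph{closure} of the Cameron--Martin space. Replacing $v$ by $v\mathbf 1_{[\delta,t]}$ changes $w$ by at most $C\delta^{1/8}\|x\|^2$ in $C([0,t];L^4)$. This follows from $\|e^{\tau A}\partial_\xi f\|_{L^4}\lesssim\tau^{-7/8}\|f\|_{L^1}$ and $\|(e^{\sigma A}x)^2\|_{L^1}\le\|x\|^2$.
\end{enumerate}

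With this fixed, your route is sound and differs usefully from the paper's. The paper compares $X^a$ with the controlled path in $C([0,T];H)$, using the mild formulation and a singular Gronwall lemma. Its constant depends on $\sup_t\|W_A(t)\|_{C(\bar\Lambda)}$, so the paper must show that the joint event $\{\sup_t\|W_A-V_A\|<\epsilon,\ \sup_t\|W_A\|_{C(\bar\Lambda)}\le\tilde\gamma\}$ has positive probability. Your energy estimate for $Y^z-Y^w$ is controlled by $\|z-w\|_{C([0,t];L^4)}$ alone. So a single small-ball event in that space, together with the Gaussian support theorem there, suffices.
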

\begin{proof}
 We follow the idea of the proof of Proposition 14.4.3 of \cite{DPZ96} which considers the case  $\gamma =0$ (see also Section 5.5 in \cite{DP}). We use  a notation similar to  \cite{DPZ96}.

Let $T>0$. By the density of $ H^1= H^1_0$ in $H= L^2(\Lambda)$ it is enough to prove that for any $a \in H$, $b \in H^1= H^1_0$, $\epsilon'>0,$ we have
\begin{equation}\label{irr1}
\P (\|X_T^{a} - b\| < \epsilon') >0 
\end{equation}
 (recall that $\| \cdot\| = \| \cdot\|_H$). 
 Arguing as in  \cite{DPZ96} we may assume in addition that $a \in H^1$. 
 Let $\epsilon>0$. One can prove that there exists a control function $v \in L^2(0,T;H)$ such that 
 \begin{gather} \label{s22}
y(t, \xi) \!= e^{tA} a + \frac{1}{2} \int_0^t e^{(t-s)A} \partial_{\xi} (y^2(s, \cdot))(\xi)  ds + \! \int_0^t (-A)^{\gamma} e^{(t-s)A} v(s, \cdot)(\xi)  ds, 
\end{gather}
 $\xi \in \Lambda$, verifies $\| y(T) - b\| < \epsilon$ (approximate controllability).  To this purpose we first recall that there exists $u \in L^2(0,T;H)$ such that 
 \begin{gather*}  
\tilde z(t) = e^{tA} a + \int_0^t (-A)^{\gamma} e^{(t-s)A} u(s) \, ds, 
\end{gather*}
 verifies $\| \tilde z(T)- b\|< \epsilon$. Then since $\tilde z \in C([0,T]; H^1)$ we define 
 $$
 v(t, \xi) = -\frac{1}{2}(-A)^{-\gamma} \partial_{\xi}(\tilde z^2(t,\cdot))(\xi) + u(t, \xi), \;\; t \in (0,T),\; \xi \in \Lambda = (0,1). 
 $$
It is clear  that $\tilde z$ solve the problem \eqref{s22}. 
 We consider
 \begin{gather*}
W_A(t) =\int_0^t (-A)^{\gamma} e^{A (t-s)}  \, dW(s),\;\; \; V_A(t) =\int_0^t (-A)^{\gamma} e^{A (t-s)}  \, v(s) ds.
\end{gather*}
As usual we argue $\omega$ by $\omega$. Assume that $\sup_{t \in [0,T]} \| W_A(t)\|_{C(\bar \Lambda)} \le \tilde \gamma$ ($\tilde \gamma$ will be fixed later).
 Comparing $X_t^{a}$ with  the solution $y$ of \eqref{s22} we get
 \begin{gather*}
\| X_t^{a} - y(t)\| \le \frac{c}{2} \int_0^t (t-s)^{-3/4}\, \| X_s^{a} - y(s)\|ds
 + \|W_A(t) - V_A(t)\|,
\end{gather*}
 where $c$ only depends  on $\tilde \gamma$. 
 By a generalized  Gronwall lemma, we deduce
 \begin{gather*}  
\sup_{t \le T}\| X_t^{a} - y(t)\| \le c_T \sup_{t \le T}\|W_A(t) - V_A(t)\|.
\end{gather*}
 Now as in \cite{DPZ96} we use that the support of the process $W_A(\cdot)$ in $C([0,T]; H)$ is the closure of the set of all functions
 \begin{gather*}
 \int_0^t  (-A)^{\gamma} e^{A (t-s)}w(s)ds,\;\;\;\; w \in L^2(0,T; H).
\end{gather*}
 It follows that  $\P(\sup_{t \le T}\|W_A(t) - V_A(t)\|< \epsilon ) >0.$ 
 
 We fix $\tilde \gamma = \epsilon + \sup_{t \le T}\| V_A(t)\|$. Arguing as in page 280 of \cite{DPZ96} we obtain
 \begin{gather*}
\P \big(\sup_{t \le T}\| X_t^{a} - y(t)\| < \epsilon \, c_T \big ) 
\\ \ge \P \big (\sup_{t \le T} \|W_A(t) - V_A(t)\|< \epsilon,  \;\;\; \sup_{t \in [0,T]} \| W_A(t)\|_{C(\bar \Lambda)}  \le \tilde \gamma \big ) >0.
\end{gather*}
 Since $\| y(T) - b \| < \epsilon$ and $\epsilon$ can be arbitrarily chosen the previous inequality implies  assertion \eqref{irr1}.
 \end{proof}
 
\def\ciao2{
E. The proof of \cite{DP}, page 143 requires to consider a normal  r.v. with values in $C([0,T]; C([0,1]))$. This is more difficult to be justified than
\cite{DP96}. 

\cite{DP}, page 143. The fact they use periodic boundary conditions and $A = \Delta - I$ is not a problem. However, whenever the author use $(-A)^{- \gamma/2} u(t)$ for $\gamma > 1/2$, we use $(-A)^{\gamma} u(t)$ for $\gamma \in [0, 1/4)$. \\

Consider the following controlled system
\begin{equation}
\begin{cases}
y'(t) &= Ay(t) + \frac{1}{2} \partial_{\xi} (y^2(t)) + (-A)^{\gamma} u(t), \quad t \in [0,T], \\
y(0) &= x \in H,
\end{cases}
\end{equation}
where $\gamma \in [0, 1/4)$ and $u \in L^2([0,T]; H)$. (We will actually consider $u \in L^2([0,T]; H^{2 \gamma + 1/2 + \eps'})$ for the purposes below.) The mild solution of the above equation is
$$
y(t) = e^{tA} x + \frac{1}{2} \int_0^t e^{(t-s)A} \partial_{\xi} (y^2(s)) \, ds + \int_0^t (-A)^{\gamma} e^{(t-s)A} u(s) \, ds, \quad t \geq 0.
$$
Moreover, if $x \in C(\bar{\Lambda})$ then $y \in C([0,T]; C(\bar{\Lambda}))$. (This is because of the smooth\-ing property of the semigroup. Indeed, we have $\int_0^t e^{(t-s)A} u(s) \, ds \in H^s$ for $s \in [0,2)$.) \\

We adjust Proposition 5.15 in \cite{DP} in the following way.

\begin{proposition} \label{prop:irr1}
Let $T > 0$, $x_0, x_1 \in H$, $\eps > 0$. Then there exists $u \in C([0,T]; H^{2 \gamma + 1/2 + \eps'})$ for some $\eps' > 0$ such that
$$
\| y(T, x_0; u) - x_1 \| \leq \eps.
$$
\end{proposition}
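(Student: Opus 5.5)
The plan is the standard approximate-controllability argument for Burgers: first steer the linear controlled heat equation, then absorb the nonlinearity into the control, with a density reduction to smooth endpoints. Take $x_0, x_1 \in H$ and $\eps>0$.

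\emph{Step 1 (density reduction).} Since $H^1_0$, indeed $C_c^\infty(\Lambda)$, is dense in $H$, we may assume $x_1$ smooth with compact support. For $x_0$, let the uncontrolled equation ($u=0$) run on a short interval $[0,\tau]$. By parabolic smoothing, $y(\tau)\in H^1$. Moreover $y(t)\to x_0$ as $\tau\to0$, and $\tau$ can be taken so small that any later steering to within $\eps/2$ still suffices. Thus it is enough to treat $x_0\in H^1$ on the interval $[\tau,T]$.

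\emph{Step 2 (linear steering).} Choose a smooth path $\tilde z(t)$ connecting $e^{(t-\tau)A}x_0$ near $t=\tau$ to $x_1$ at $t=T$. For instance, take
\[
\tilde z(t)=e^{(t-\tau)A}x_0+\phi(t)\bigl(x_1-e^{(T-\tau)A}x_0\bigr),
\]
where $\phi$ is smooth, $\phi(\tau)=0$ and $\phi(T)=1$. Then define
\[
w=\tilde z'-A\tilde z=\phi'(t)(\ldots)-\phi(t)A(\ldots).
\]
Here $e^{(T-\tau)A}x_0$ is smooth. The only trouble is near $t=\tau$, where $e^{(t-\tau)A}x_0$ is only $H^1$; but that term solves the heat equation exactly and contributes nothing to $w$. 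Hence $w\in C([\tau,T];H^s)$ for every $s$, modulo the compatibility of $x_1-e^{(T-\tau)A}x_0$ with the domains of powers of $A$. That compatibility holds because $x_1$ has compact support and the heat-evolved term lies in every $D((-A)^s)$.

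\emph{Step 3 (absorbing the nonlinearity).} Set
\[
u(t)=(-A)^{-\gamma}\Bigl(w(t)-\tfrac12\partial_\xi(\tilde z(t)^2)\Bigr).
\]
Then $\tilde z$ is a mild solution of the controlled Burgers equation with control $u$. By uniqueness of mild solutions (Lipschitz local estimates as in Proposition \ref{cia1}), $y(T;x_0,u)=\tilde z(T)=x_1$ on the reduced problem. We need $u\in C([\tau,T];H^{2\gamma+1/2+\eps'})$, i.e. $\partial_\xi(\tilde z^2)\in C_tH^{4\gamma+1/2+\eps'}$, so $\tilde z^2\in C_tH^{3/2+4\gamma+\eps'}$, which is less than $H^{5/2}$.

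\textbf{Main obstacle.} The nonlinear term near $t=\tau$ is the hard part. There $\tilde z\approx e^{(t-\tau)A}x_0$ is only $H^1$, and its $H^{3/2+}$ norm blows up like $(t-\tau)^{-1/4-}$. The Dirichlet compatibility of $\tilde z^2$ is fine, since $\tilde z$ vanishes at the boundary and hence so does $\tilde z^2$; but $\partial_\xi(\tilde z^2)$ need not vanish there, which restricts membership in $D((-A)^{s})$ for $s>1/4$. To handle this I would modify the construction: on $[0,\tau]$ use $u=0$ and accept the genuine nonlinear flow, which by smoothing lands in all $D((-A)^s)$ compatible with the boundary. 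Then start the steering from $y(\tau)$, replacing $e^{(t-\tau)A}x_0$ by the uncontrolled Burgers solution, which is smooth for $t>\tau$.

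The endpoint issue at $t=\tau$ itself I would resolve by letting $\phi$ vanish to high order at $\tau$. With this choice the correction term, and hence $u$, vanishes near $\tau$, so continuity of $u$ into $H^{2\gamma+1/2+\eps'}$ needs to be checked only where everything is smooth. If boundary compatibility still fails for large $s$, I would perturb $x_1$ within $\eps$ to have all odd reflections smooth; this is possible by density of $\mathrm{span}\{e_k\}$.
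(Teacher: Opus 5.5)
Your final construction is correct, but it takes a genuinely different route from the paper's. In the final version you run the free Burgers flow $y_f$ on $[0,\tau]$, then follow $\tilde z=y_f+\phi\,d$ with $d=x_1-y_f(T)$, $\phi(\tau)=\phi'(\tau)=0$ and $\phi(T)=1$.

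The paper follows Da Prato's Proposition 5.15 and argues as follows:
\begin{enumerate}
\item It replaces \emph{both} endpoints by smooth $z_0,z_1$ and joins them by a smooth path $z$ with exact forcing $\beta_{z_0,z_1}=z'-Az-\frac12\partial_\xi(z^2)$.
\item It chooses $u$ with $\|\beta_{z_0,z_1}(t)-(-A)^{\gamma}u(t)\|\le c\eps$ uniformly in $t$, using density of $H^{1/2+\eps'}$ in $H$.
\item It closes with an $L^2$ energy/Gronwall stability estimate around $z$, which absorbs both $\|x_0-z_0\|$ and the control error.
\end{enumerate}

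You instead put the rough datum into the reference path itself. A direct computation gives
\[
(-A)^{\gamma}u(t)=\phi'(t)\,d-\phi(t)\bigl(Ad+\partial_\xi(y_f(t)\,d)\bigr)-\tfrac12\phi(t)^2\partial_\xi(d^2),\qquad t\in[\tau,T],
\]
with $u=0$ on $[0,\tau]$. So a smooth target is hit \emph{exactly}, and only uniqueness of mild solutions is used: there is no stability estimate and no approximation of the control.

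The price is the one fact you assert without proof: the nonlinear Dirichlet flow must be instantly smooth \emph{with boundary compatibility}. Concretely, you need $y_f\in C([\tau,T];H^{3/2+\eps'})$ and $y_f(T)\in D((-A)^{5/4+\eps'/2})$ to get $Ad\in H^{1/2+\eps'}$. The second condition requires $\partial_\xi^2 y_f(T)=0$ on $\partial\Lambda$. This is true because $\partial_\xi(y^2)$ preserves oddness, so Dirichlet Burgers is periodic Burgers on odd data; you should say so. Alternatively, $\tilde z=(1-\phi)y_f+\phi x_1$ avoids the second-order compatibility altogether.

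What each route buys: the paper's route needs regularity only of the approximants it picks, and it never has to check that the exact forcing lies in the range of the control operator. Yours gives exact reachability of a dense set of targets from any $x_0$.

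Minor points:
\begin{itemize}
\item Since $(-A)^{-\gamma}$ gains $2\gamma$ derivatives, $u\in H^{2\gamma+1/2+\eps'}$ if and only if $(-A)^{\gamma}u\in H^{1/2+\eps'}$. Your requirement $\partial_\xi(\tilde z^2)\in C_tH^{4\gamma+1/2+\eps'}$ is therefore stronger than needed, which is harmless.
\item The boundary worry is moot: $\partial_\xi(\tilde z^2)=2\tilde z\,\partial_\xi\tilde z$ vanishes wherever $\tilde z$ does.
\item The real defect of your first heat-path version is not $H^1$ regularity, since $y(\tau)$ is smooth for $\tau>0$. It is the jump $u(\tau^+)=-\tfrac12(-A)^{-\gamma}\partial_\xi(y(\tau)^2)\ne 0$ against $u=0$ on $[0,\tau]$. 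Switching to the Burgers reference path is exactly what removes it.
\end{itemize}
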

\begin{proof}
We follow the setting and the proof of Proposition 5.15. The function $\beta_{z_0, z_1}(t) \in C(\bar{\Lambda})$, so we must adjust the space for our function $u(t)$ accordingly to the requirement
\begin{itemize}
\item[(ii)] $\| \beta_{z_0, z_1}(t) - (-A)^{\gamma} u(t) \| \leq c \eps, \quad t \in [0,T]$.
\end{itemize}
The reason why we select a function $u$ from the space $C([0,T]; H^{2 \gamma + 1/2 + \eps'})$ is such that $(-A)^{\gamma} u(t) \in H^{1/2 + \eps'} \subset C(\bar{\Lambda}) \subset H$. Note that $C(\bar{\Lambda})$ is dense in $H$, so such function for (ii) can be selected. (In a more abstract setting of Chapter 3 in \cite{DP}, we use $U = H^{2 \gamma + 1/2 + \eps'}$, $B(U) = H^{1/2 + \eps'} \subset C(\bar{\Lambda}) \subset H$.)

The proof is then performed and finished as the proof of Proposition 5.15.
\end{proof}

\begin{proposition}
The transition semigroup $(P_t, t \geq 0)$ corresponding to the solution $X$ of equation \eqref{eq:Burgers} is irreducible.
\end{proposition}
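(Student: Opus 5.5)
Irreducibility means that for every $T>0$, every $a\in H$ and every nonempty open set $U\subset H$ we have $P_T\mathbf 1_U(a)>0$. By density of $H^1$ in $H$ it suffices to show that for $a\in H$, $b\in H^1$ and $\epsilon'>0$,
\[
\P\bigl(\|X_T^a-b\|<\epsilon'\bigr)>0 .
\]
By the Markov property we may also assume $a\in H^1$: we let the solution evolve on a short interval $[0,t_0]$ and use the smoothing of the equation together with continuous dependence on the initial datum, Remark \ref{remark:continuityeta}. The plan has two ingredients: a deterministic approximate controllability result for the skeleton equation driven by $(-A)^{\gamma}v$, and a support theorem for the stochastic convolution $W_A(t)=\int_0^t(-A)^\gamma e^{(t-s)A}dW_s$, glued together by a pathwise stability estimate.

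\textbf{Step 1 (control).} Since $(-A)^\gamma$ is injective with dense range, the linear system $\tilde z'=A\tilde z+(-A)^\gamma u$ is approximately controllable in $H$. Hence there is $u\in L^2(0,T;H)$ with $\|\tilde z(T)-b\|<\epsilon$, and we can take $\tilde z\in C([0,T];H^1)$, for instance via a smooth interpolation between $e^{tA}a$ and $b$. We then cancel the nonlinearity by setting
\[
v=u-\tfrac12(-A)^{-\gamma}\partial_\xi(\tilde z^2).
\]
This $v$ lies in $L^2(0,T;H)$ because $\tilde z^2\in H^1$ and $(-A)^{-\gamma}$ is bounded. With this choice $y=\tilde z$ solves the controlled Burgers equation \eqref{s22} and satisfies $\|y(T)-b\|<\epsilon$.

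\textbf{Step 2 (stability).} Put $V_A(t)=\int_0^t(-A)^\gamma e^{(t-s)A}v(s)\,ds$, and work on the event where $\sup_t\|W_A(t)\|_{C(\bar\Lambda)}\le\tilde\gamma$, which is meaningful by Remark \ref{sys}. Subtracting the mild equations gives
\[
\|X^a_t-y(t)\|\le \tfrac12\int_0^t \bigl\|e^{(t-s)A}\partial_\xi\bigl((X_s^a)^2-y_s^2\bigr)\bigr\|\,ds+\|W_A(t)-V_A(t)\|.
\]
We bound the integrand with $\|e^{\tau A}\partial_\xi f\|\lesssim \tau^{-3/4}\|f\|_{L^1}$ and $|X^2-y^2|\le |X-y|\,|X+y|$. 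We need $\sup_t\|X_t^a\|_{C(\bar\Lambda)}$ controlled by a constant depending on $\tilde\gamma$, $\|a\|_{H^1}$ and $v$. This is the \textbf{main obstacle}, since $W_A$ is only continuous and $X^a$ is not a priori bounded in sup norm. We would follow \cite{DPZ96}: write $X^a=W_A+Y$, where $Y$ solves a random PDE whose $L^\infty$ norm is bounded in terms of $\sup\|W_A\|_{C(\bar\Lambda)}$ (energy estimates as in Theorem \ref{theorem:apriorigamma}, then smoothing). Once this bound is in hand, the generalized Gronwall lemma with kernel $(t-s)^{-3/4}$ gives
\[
\sup_{t\le T}\|X_t^a-y(t)\|\le c_T\sup_{t\le T}\|W_A(t)-V_A(t)\|,
\]
with $c_T$ depending on $\tilde\gamma$.

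\textbf{Step 3 (support).} $W_A$ is a centered Gaussian process in $C([0,T];H)$ with Cameron--Martin space $\{\int_0^\cdot(-A)^\gamma e^{(\cdot-s)A}w(s)\,ds:\ w\in L^2(0,T;H)\}$, so its support is the closure of that set. Fix $\tilde\gamma=\epsilon+\sup_t\|V_A(t)\|_{C(\bar\Lambda)}$; this is finite since $V_A\in C([0,T];H^{1-2\gamma-\delta})\subset C([0,T];C(\bar\Lambda))$. The event that $\sup_t\|W_A-V_A\|<\epsilon$ and $\sup_t\|W_A\|_{C(\bar\Lambda)}\le\tilde\gamma$ then has positive probability. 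To see this, apply the support theorem in $C([0,T];C(\bar\Lambda))$, with $V_A$ approximated in that norm by Cameron--Martin elements, or argue via Girsanov as on page 280 of \cite{DPZ96}. On this event $\|X_T^a-b\|\le(c_T+1)\epsilon$, and choosing $\epsilon$ small gives \eqref{irr1}.
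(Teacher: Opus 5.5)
Your route is the paper's route: reduction to $b\in H^1$, approximate controllability of $\tilde z'=A\tilde z+(-A)^\gamma u$ followed by cancelling the nonlinearity, the comparison estimate with kernel $(t-s)^{-3/4}$ and the generalized Gronwall lemma on $\{\sup_t\|W_A(t)\|_{C(\bar\Lambda)}\le\tilde\gamma\}$, and the support of $W_A$.

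\textbf{The reduction to $a\in H^1$ fails as you justify it.} For $\gamma\ge 0$ the stochastic flow never enters $H^1$. Indeed $W_A(t_0)$ is Gaussian with $\E\|W_A(t_0)\|_{H^1}^2=\sum_k \alpha_k^{2\gamma}(1-e^{-2\alpha_k t_0})/2=\infty$, so it is a.s.\ not in $H^1$. Since $X^a_{t_0}$ differs from it only by the smoother remainder $Y_{t_0}$, $X^a_{t_0}\notin H^1$ a.s. The Markov property would therefore need positivity of $P_{T-t_0}\mathbf{1}_U$ at non-$H^1$ points. Invoking continuity in the initial datum (strong Feller, or Remark \ref{remark:continuityeta}) does not supply it: either it leads back to an irreducibility statement for $X^a_{t_0}$, or it requires the random Lipschitz constant and the threshold $\tilde\gamma$ to be controlled uniformly along a sequence $a_n\to a$ of $H^1$ data. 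You do not address either.

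The fix is to make the reduction on the skeleton. Put $v=0$ on $[0,t_0]$, so $y$ is the deterministic Burgers flow from $a\in H$; it lies in $L^2(0,T;H^1)$, so $y(t_0)\in H^1$ for a.e.\ $t_0>0$. Then run your Step 1 on $[t_0,T]$ starting from $y(t_0)$. Once Step 2 is done with $L^2$ bounds (see below), nothing in Steps 2--3 uses $a\in H^1$. The paper itself only says ``arguing as in'' Da Prato--Zabczyk here, so this detail has to be supplied anyway.

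\textbf{Further remarks.}

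\emph{The ``main obstacle'' in Step 2 does not exist.} With $\|e^{\tau A}\partial_\xi f\|\lesssim\tau^{-3/4}\|f\|_{L^1}$ and Cauchy--Schwarz, $\|(X^a_s)^2-y_s^2\|_{L^1}\le\|X^a_s-y_s\|\,\|X^a_s+y_s\|$. So only $\sup_t\|X^a_t\|_{L^2}$ is needed. The a priori bound of Appendix \ref{ppp2} gives it in terms of $\|a\|$ and $\sup_t\|W_A(t)\|^2_{C(\bar\Lambda)}\le\tilde\gamma^2$. This is why the paper's constant depends only on $\tilde\gamma$, and you can drop the $L^\infty$ bootstrap.

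\emph{Your Step 3 is tidier than the paper's.} Measuring $\tilde\gamma$ in $C(\bar\Lambda)$ puts both conditions on the single event $\{\sup_t\|W_A-V_A\|_{C(\bar\Lambda)}<\epsilon\}$. Also, $V_A$ is itself a Cameron--Martin element because $v\in L^2(0,T;H)$, so no approximation is needed.

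\emph{Keep two separate tolerances.} $c_T$ depends on $\tilde\gamma$, hence on the control chosen for accuracy $\epsilon$, so ``$(c_T+1)\epsilon$ small'' is not automatic. First fix the control with $\|y(T)-b\|<\epsilon'/2$. Then set $\tilde\gamma=1+\sup_t\|V_A(t)\|_{C(\bar\Lambda)}$. Only then require $\sup_t\|W_A-V_A\|_{C(\bar\Lambda)}<\epsilon_1$ with $\epsilon_1\le 1$ and $c_T\epsilon_1<\epsilon'/2$. The paper's last line has the same slip.

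\emph{Smooth target in Step 1.} In the interpolation, replace $b$ by a smooth $b'$ such as $\pi_N b$. Otherwise $(-A)^{-\gamma}(\tilde z'-A\tilde z)$ need not lie in $L^2(0,T;H)$.
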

\begin{proof}
For given $\eps, \eps' > 0$, we use the above Proposition \ref{prop:irr1} to obtain a function $u \in C([0,T]; H^{2 \gamma + 1/2 + \eps'})$ and we repeat the proof of Theorem 5.16 in \cite{DP}. The last line of the proof will use the fact that our process of the stochastic convolution $(z(t), t \geq 0)$ (that is denoted as $(W_A, t \geq 0)$ in \cite{DP}) is full in $C([0,T]; C(\bar{\Lambda}))$, since $\Ker (-A)^{2 \gamma} = \{ 0 \}$. (Cf. Exercise 2.16.)
\end{proof}
 } 
  
\section{The well-posedness of the equation \eqref{eq:Burgers}}\label{ppp2}
 
{\bf Proof of Proposition \ref{cia1}} Taking into account Remark \ref{sys}
   we know that   $z \in C([0,T]; C(\bar{\Lambda}))$, $\P$-a.s.  
 Next, we follow the approach of \cite{G} (cf. also \cite{DPZ96} and \cite{JP}).
 
For any $n \geq 1$, we introduce the orthogonal projection $\tilde \pi_n: H \rightarrow B(0, n)$, with $B(0, n) = \{ u \in H; \| u \| \leq n \}$. One can show that the equation (for $t \in [0,T]$, $\P$-a.s.) 
\begin{equation} \label{eq:Burgers with n}
X_n(t) = e^{At} x + \frac{1}{2} \int_0^t e^{A (t-s)} \partial_\xi \left( [\tilde \pi_n (X_n(s))]^2 \right) \, ds + \int_0^t e^{A (t-s)} (-A)^{\gamma} \, dW_s,
\end{equation} 
has a unique solution on a small time interval $[0,R]$, $0 < R \leq T$ by the contraction principle (we do not stress the dependence on $x$ in  the solution). The time $R = R(n, T)$ is deterministic.
Solutions of \eqref{eq:Burgers with n} can be defined for all $t \geq 0$. We define the stopping times
$$
\tau_n := \inf \{ t \geq 0; \| X_n(t) \| \geq n \}, \quad n \geq 1.
$$
Since $X_n(t) = X_m(t)$, $m \geq n$, $t \leq \tau_n$, one can set $X_t = X_n(t)$, $t \leq \tau_n$ and define a solution of \eqref{eq:Burgers with n} on $[0, \tau_{\infty})$. We obtain a global mild solution by proving that $\tau_{\infty} = \infty$, i.e., $\tau_n \rightarrow \infty$, $\P$-a.s. by means of {\it a-priori} estimates.
  Denoting
$$
Y_t := X_t - z(t),
$$
we can follow the proof of Proposition 2.9 in \cite{JP} to obtain
$$
\frac{1}{2} \frac{d}{dt} \| Y_t \|^2 + \| Y_t \|_1^2 \leq \frac{3}{4} \| Y_t \|_1^2 + 2 \| z(t) \|_{C(\bar{\Lambda})}^2 \| Y_t \|^2 + \| z(t) \|_{C(\bar{\Lambda})}^4
$$
(we consider a regular approximation of $z(t)$ and the corresponding regular solution $Y_t$;   note that the final obtained estimates hold with $z(t)$, i.e., they do not require the presence of the regular approximation). 
Gronwall lemma provides
$$
\| Y_t \|^2 \leq e^{4 \mu T} \left( \| x \|^2 + 2 \mu^2 \right),
$$
for $t \in [0,T]$ and $\mu = \sup_{t \in [0,T]} \| z(t) \|_{C(\bar{\Lambda})}^2$. Obtaining such {\it a-priori} estimate finishes the proof as in  page 263 of \cite{DPZ96}. \qed 
 
\section{Proof of Proposition \ref{continuo}}   \label{app1} 
 
\noindent The proof  is inspired by Section 5.5 in \cite{DPZ14}
 and requires some preliminaries 
on  the random field $z_{\alpha}$  given in  \eqref{zz}.
 
Recall that  the process $z_{\alpha}$, $\alpha \ge 0,$ 
 is the unique mild solution to the equation
$$
dz_{\alpha}(t) = (A - \alpha) z_{\alpha}(t) \, dt + (-A)^{\gamma} \, dW(t), \quad z_{\alpha}(0) = 0.
$$
Clearly, $z_0(t) = z(t)$. 
Taking into account Remark \ref{sys}  we can adapt the arguments of Section 5.5.1 in \cite{DPZ14}. In particular 
the operator $A$ satisfies assumptions (5.39)--(5.41) from \cite{DPZ14}:
\begin{equation}
A e_k = - \alpha_k e_k=  - \pi^2 k^2 e_k, \quad k \geq 1, 
\tag{5.39}
\end{equation}
and, using $\lambda_k = \alpha_k^{2\gamma} = (\pi^2 k^2)^{2 \gamma}$,
\begin{equation}
\alpha_k \geq L > 0, \quad \sum_{k=1}^{\infty} \frac{\lambda_k}{\alpha_k} = \sum_{k=1}^{\infty} \frac{1}{\alpha_k^{1 - 2 \gamma}} \cong \sum_{k=1}^{\infty} \frac{1}{k^{2 - 4 \gamma}} < \infty, \tag{5.40}
\end{equation}
\begin{equation}
(e_k) \subset C([0,1]), \quad |e_k(\xi)| \leq C, \quad |\nabla e_k(\xi)| \leq C \alpha_k^{1/2}, \quad \xi \in (0,1), \, k \geq 1, \tag{5.41}
\end{equation}
for some constants $C, L > 0$ and $\gamma \in [0, 1/4)$. $A$ also satisfies assumption (5.42) with  $\eta = \frac{1}{2} - 2 \gamma - \eps \in (0,1)$ and $\eps$ small enough:
\begin{equation}
\sum_{k=1}^{\infty} \frac{\lambda_k}{\alpha_k^{1 - \eta}} = \sum_{k=1}^{\infty} \frac{1}{\alpha_k^{1 - 2 \gamma - \eta}} \cong \sum_{k=1}^{\infty} \frac{1}{k^{2 - 4 \gamma - 2 \eta}} < \infty. \tag{5.42}
\end{equation}
Now we use Theorem 5.22 (see also Lemma 5.21) in \cite{DPZ14} for the ``shifted'' operator $A_{\alpha} := A - \alpha I$ with $\alpha \geq 0$. Note that the eigensystem for $A_{\alpha}$ satisfies
$$
A_{\alpha} e_k = (- \alpha_k - \alpha) e_k, \quad k \geq 1,
$$
and fulfills the above assumptions (5.39)--(5.41). The assumption (5.42) is also fulfilled with $\eta = \frac{1}{2} - 2 \gamma - \eps$, since
$$
\sum_{k=1}^{\infty} \frac{\lambda_k}{(\alpha_k + \alpha)^{1 - \eta}} = \sum_{k=1}^{\infty} \frac{\alpha_k^{2 \gamma}}{(\alpha_k + \alpha)^{1 - \eta}} \cong \sum_{k=1}^{\infty} \frac{1}{k^{2 - 4 \gamma - 2 \eta}} < \infty.
$$
For the stochastic convolution $z_{\alpha}(t, \xi)$, we have the following representation
$$
z_{\alpha}(t, \xi) = \sum_{k=1}^{\infty} \alpha_k^{\gamma} e_k(\xi) \int_0^t e^{- (\alpha_k + \alpha) (t-s)} \, d\beta_k(s), \quad \xi \in [0,1], \, t \in [0,T],
$$
where $(\beta_k)$ are independent real Wiener processes.\\
 
\noindent { \bf  Proof of Proposition \ref{continuo}.} Thanks to \eqref{s33} it is enough to provide the proof when $\kappa=0.$
  Consider $0 \leq s \leq t \leq T$, $\alpha, \alpha' \geq 0$ and $\xi, \zeta \in [0,1]=\bar \Lambda$.  We will suitably  apply the Kolmogorov test. 

It is well-known that  that $z_{\alpha}(t, \xi) - z_{\alpha'}(s, \zeta)$ is Gaussian. Indeed, both $z_{\alpha}(t, \xi)$ and $z_{\alpha'}(s, \zeta)$ have a representation based on  series of  independent stochastic integrals. We have
\begin{align*}
\E |z_{\alpha}(t, \xi) - z_{\alpha'}(s, \zeta)|^2 &\leq 3 ( \E |z_{\alpha}(t, \xi) - z_{\alpha}(s, \xi)|^2 + \E |z_{\alpha}(s, \xi) - z_{\alpha}(s, \zeta)|^2 \\
&\phantom{=}+ \E |z_{\alpha}(s, \zeta) - z_{\alpha'}(s, \zeta)|^2)
\end{align*}
and we will find upper bounds for the three individual terms. 

(a). For the fist term, concerning continuity in $t$,  we follow the proof of Lemma 5.21 in \cite{DPZ14}.
Fix $0 < s < t \leq T$, $\xi \in [0,1]$  and $\alpha \geq 0$. We have
\begin{gather*}
\E \left| z_{\alpha}(t, \xi) - z_{\alpha}(s, \xi) \right|^2 = \sum_{k=1}^{\infty} \alpha_k^{2 \gamma} |e_k(\xi)|^2 \int_s^t e^{-2 (\alpha_k + \alpha)(t - \sigma)} \, d\sigma \\
+ \sum_{k=1}^{\infty} \alpha_k^{2 \gamma} |e_k(\xi)|^2 \int_0^s \left( e^{- (\alpha_k + \alpha)(t - \sigma)} - e^{- (\alpha_k + \alpha)(s - \sigma)} \right)^2 \, d\sigma 
=: I_1(t, s, \xi, \alpha) + I_2(t, s, \xi, \alpha).
\end{gather*}
As in the proof of Lemma 5.21, we arrive at
\begin{align*}
I_1(t, s, \xi, \alpha) &\lesssim |t-s|^{\eta} \sum_{k=1}^{\infty} \frac{\alpha_k^{2 \gamma}}{(\alpha_k + \alpha)^{1 - \eta}}, \;\;\;\;
I_2(t, s, \xi, \alpha) &\lesssim |t-s|^{\eta} \sum_{k=1}^{\infty} \frac{\alpha_k^{2 \gamma}}{(\alpha_k + \alpha)^{1 - \eta}}. 
\end{align*}
From this, we see that we can choose $\eta$ like $\eta = \frac{1}{2} - 2 \gamma - \eps$ for some small $\eps > 0$. 
 
(b). For the second term, concerning continuity in $\xi$, fix $\xi, \zeta \in [0,1]$, $0 < s \leq T$ and $\alpha \geq 0$. We have
\begin{gather*}
\E \left| z_{\alpha}(s, \xi) - z_{\alpha}(s, \zeta) \right|^2 
= \sum_{k=1}^{\infty} \alpha_k^{2 \gamma} |e_k(\xi) - e_k(\zeta)|^2 \int_0^s e^{-2 (\alpha_k + \alpha)(s - \sigma)} \, d\sigma 
\\
= \sum_{k=1}^{\infty} \alpha_k^{2 \gamma} |e_k(\xi) - e_k(\zeta)|^2 \frac{1 - e^{-2(\alpha_k - \alpha) s}}{2 (\alpha_k + \alpha)} 
\lesssim |\xi - \zeta|^{2 \eta} \sum_{k=1}^{\infty} \frac{\alpha_k^{2 \gamma + \eta}}{\alpha_k + \alpha},
\end{gather*}  
where we used that $|e_k(\xi) - e_k(\zeta)| \leq C 2^{1 - \eta} \alpha_k^{\eta/2} |\xi - \zeta|^{\eta}$ for $k \in \N$, $\eta \in [0,1]$ and some constant $C > 0$. From the last line of the above calculation, we see that we can again take $\eta = \frac{1}{2} - 2 \gamma - \eps$ for some small $\eps > 0$. 

(c). For the third term, concerning  the continuity in $\alpha$, we fix $\alpha, \alpha' \geq 0$, $0 < s \leq T$ and $\zeta \in [0,1]$. We have
\begin{align}
&\E \left| z_{\alpha}(s, \zeta) - z_{\alpha'}(s, \zeta) \right|^2 \notag 
\\
&\lesssim \sum_{k=1}^{\infty} \alpha_k^{2 \gamma} \int_0^T \left( e^{- (\alpha_k   + \alpha) \sigma} - e^{- (\alpha_k + \alpha') \sigma} \right)^2 \, d\sigma  \notag
= \sum_{k=1}^{\infty} \alpha_k^{2 \gamma} \int_0^T \Big[e^{- \alpha_k \sigma} \underbrace{(e^{- \alpha \sigma} - e^{- \alpha' \sigma})}_{\leq |\alpha - \alpha'|^{} \sigma^{}} \Big]^2 \, d\sigma 
\\
&
\leq |\alpha - \alpha'|^{2 } T^{2 } \sum_{k=1}^{\infty} \alpha_k^{2 \gamma} \int_0^T e^{-2 \alpha_k \sigma} \, d\sigma \notag 
\leq \frac{1}{2} |\alpha - \alpha'|^{2 } T^{2 } \sum_{k=1}^{\infty} \frac{1}{\alpha_k^{1 - 2 \gamma}}. \notag 
\end{align}
In total, the individual terms from parts (a), (b), (c) satisfy the following
\begin{align*}
\E |z_{\alpha}(t, \xi) - z_{\alpha}(s, \xi)|^2 &\leq C |t-s|^{\eta} \sum_{k=1}^{\infty} \frac{\alpha_k^{2 \gamma}}{(\alpha_k + \alpha)^{1-\eta}}, \quad \eta = \frac{1}{2} - 2 \gamma - \eps, \\
\E |z_{\alpha}(s, \xi) - z_{\alpha}(s, \zeta)|^2 &\leq C |\xi - \zeta|^{2 \eta} \sum_{k=1}^{\infty} \frac{\alpha_k^{2 \gamma + \eta}}{\alpha_k + \alpha}, \quad \eta = \frac{1}{2} - 2 \gamma - \eps, \\
\E |z_{\alpha}(s, \zeta) - z_{\alpha'}(s, \zeta)|^2 &\leq C |\alpha - \alpha'|^2 T^2 \sum_{k=1}^{\infty} \frac{1}{\alpha_k^{1 - 2 \gamma}},
\end{align*}
for some constant $C > 0$  (this is independent  of $\alpha, \alpha', s,t,\xi, \zeta$ and  may later change from line to line).  Therefore
$$
\E |z_{\alpha}(t, \xi) - z_{\alpha'}(s, \zeta)|^2 \leq C \left( |t-s|^{\eta} + |\xi - \zeta|^{2 \eta} + |\alpha - \alpha'|^2 \right), 
$$
for $\eta = 1/2 - 2 \gamma - \eps$. Choose $m \in \N$ such that $m \eta = m \left( 1/2 - 2 \gamma - \eps \right) > 1$. By Gaussianity of $z_{\alpha}$, we have  
\begin{align*}
\E |z_{\alpha}(t, \xi) - z_{\alpha'}(s, \zeta)|^{2m} &\leq C \left( \E |z_{\alpha}(t, \xi) - z_{\alpha'}(s, \zeta)|^2 \right)^m \\
&\leq C \left( |t-s|^{m \eta} + |\xi - \zeta|^{2m \eta} + |\alpha - \alpha'|^{2m} \right).
\end{align*}
The exponents $m \eta$, $2m \eta$ and $2m$ are bigger than $1$. Therefore we can use the Kolmogorov test (Theorem 3.5 in \cite{DPZ14}) and we obtain 
a version of $z_{\alpha} (t, \xi)$  denoted by $\bar  z_{\alpha}(t, \xi)$ which is continuous in all his variables. 

Note that  $\bar z_{\alpha}(t): \Omega \rightarrow C(\bar{\Lambda})$
is  continuous in $(\alpha, t) \in [0, \infty) \times [0,T]$. \qed 
 
\vskip 2mm {\small  \noindent {\bf Acknowledgments.}  The authors wish to
thank  M. Gubinelli for useful suggestions. This research activity was carried out as part of
the PRIN 2022 project ``Noise in fluid dynamics and related models''. 
 The first and third author are  members of 
``Gruppo Nazionale per l'Analisi Matematica, la Probabilità e le loro Applicazioni (GNAMPA)'', which
is part of INdAM.  
} 
  

\end{document}